\newtheorem{theorem}{\bf Theorem}[subsection]
\newtheorem{prop}[theorem]{\bf Proposition}
\newtheorem{cor}[theorem]{\bf Corollary}
\newtheorem{lemma}[theorem]{\bf Lemma}
\newtheorem{definition}[theorem]{\bf Definition}
\theoremstyle{remark}
\newtheorem{example}[theorem]{\bf Example}
\theoremstyle{remark}
\newtheorem{rem}[theorem]{\bf Remark}
 \numberwithin{equation}{subsection}
\newcommand{\Hom}{\operatorname{Hom}}
\def\go{\mathfrak}
\def\bb{\mathbb}
\def\C{\bb C}
\def\Z{\bb Z}
\def\N{\bb N}
\def\R{\bb R}
\def\cal{\mathcal}
 \def\adots{\mathinner{\mkern2mu\raise1pt\hbox{.}
\mkern3mu\raise4pt\hbox{.}\mkern1mu\raise7pt\hbox{.}}}
\date\today
 \title[Invariant differential operators]{Invariant differential operators on a class of multiplicity free spaces}
\author{Hubert Rubenthaler}
\address
{Hubert Rubenthaler\\ Institut de Recherche Math\'ematique Avanc\'ee\\
Universit\'e de Strasbourg et CNRS\\
7 rue Ren\'e Descartes\\
67084 Strasbourg Cedex\\ France\\
E-mail: {\tt rubenth@math.unistra.fr}}
\begin{document}
\parindent=0pt
 
 \maketitle

\begin{abstract}

{\bf If $(G,V)$ is a multiplity free space with a one dimensional quotient we give generators and relations for  the non-commutative algebra $D(V)^{G'}$ of invariant differential operators under the semi-simple part $G'$ of the reductive group $G$. More precisely we show that $D(V)^{G'}$ is the quotient of a Smith algebra   by a completely described two-sided ideal.}

 \end{abstract}
 
\maketitle
\vskip 50pt
\hskip 30ptAMS classification: 22E46, 16S32
\vskip 5pt
\hskip 30pt Key words: multiplicity free space, invariant differential operator, Smith algebra.
%\vfill\eject
 %{\small \def\contentsname{Table of Contents}
%\tableofcontents}
\medskip\medskip\medskip\medskip\medskip\medskip
%%%%%%%%%%%%%%%%%%%%%%%%%%%%%%%%%%%%%%%%%%%%%%%%%%%%%%%%%%%%%%%%%%%%%%%%%%%%%%%%%%%%%%%%
 
 \section{Introduction}
 \vskip 10pt
 Let $H$ be a  reductive algebraic group over ${\C}$ and let $X$ be a smooth irreducible  $H$-variety. Let ${\C}[X]$ be the algebra of regular functions on $X$ and let $D(X)$ be the algebra of differential operators on $X$. Then the $H$-action on $X$ extends naturally to $\C[X]$ and $D(X)$. Let $\C[X]^H$ (resp. $D(X)^H)$ be the subalgebras of $H$-invariants in $\C[X]$ (resp. $D(X)$). The ring $\C[X]^H$ is the ring of regular functions on the categorical quotient $X/\hskip -3pt/H$.  The problem of determining the structure of $D(X)^H$ was investigated by several authors  (\cite{Schwarz},  \cite{Van-den-Bergh}, \cite{Levasseur-Stafford}). On the other hand  under the above mentioned hypothesis there exists  a $H$-equivariant  restriction map
$$ \delta:D(X)^H \longrightarrow D(X/\hskip -3pt /H).$$
obtained by applying  elements in $D(X)^H$ to functions in  $\C[X]^H$. It is  expected that $D(X)^H$ as well as its image under $\delta$ (the so-called algebra of radial components) should share many properties of enveloping algebras(\cite{Schwarz}, \cite{Levasseur}). In this paper we obtain the precise structure of $D(V)^{G'}$ in the  case where $(G,V)$ is a  so called {\it multiplicity free spaces with one dimensional quotient}, (here  $G$ is reductive and  $G'=[G,G]$ is the derived group).  These spaces are defined to be  the multiplicity free spaces $(G,V)$ for which  the quotient $V/\hskip -3pt/G'$ is one dimensional. To be more precise we show that the (non-commutative) algebra $D(V)^{G'}$ is a quotient of a generalized {\it Smith algebra}. Over $\C$ this kind of algebras were introduced by S. P. Smith (\cite{Smith}) as natural generalizations of the enveloping algebra of ${\go {sl}}_{2}$.  As a Corollary we describe by generators and relations the  algebras of radial components attached to the $G'$-isotypic components in the polynomial algebra $\C[V]$ (the image under $\delta$ above corresponds to the trivial representation of $G'$).

According to the classification obtained by the author  (\cite{Rubenthaler-JLT}), the class of multiplicity free spaces with a one dimensional quotient is a rather large class inside the multiplicity free spaces. It contains both irreducible and non irreducible representations. 

The representations $(Str(V), V)$ where $V$ is a simple Jordan algebra over $\C$ and where $StrV)$ is the structure group of $V$ are examples of irreducible multiplicity free spaces with a one dimensional quotient (see Remark \ref{rem-Jordan}  and Example \ref{ex-commutative-PV} below). Among these there is the natural representation of $GL(n,\C)$ on the space $Sym_{n}(\C)$ of $n\times n$ symmetric matrices and also the irreducible 27-dimensional representation of $E_{6}\times \C^*$.

The spin representation of $Spin(7)\times \C^*$ and the irreducible 7-dimensional representation of $G_{2}\times \C^*$ are other irreducible examples.

The representation $(SL(n,\C)\times (\C^*)^2, \Lambda_{1}\oplus \Lambda^2(\Lambda_{1}))$ ($n $ odd and $n\geq 5$) where $\Lambda_{1}$ is the natural representation of $SL(n,\C)$ and where $\Lambda^2(\Lambda_{1}) $ is its second exterior power, provides a non irreducible example.
%%%%%%%%%%%%%%%%%%%%%%%%%%%%%%%%%%%%%%
\vskip 10pt
Let us now give a more precise description of our paper.
 \vskip 3pt

In section 2 we first give the basic definitions, notations and properties of multiplicity free spaces, including multiplicity free spaces with one dimensional quotient.      If $(G,V)$ is a multiplicity free space   then $G$ has an open orbit on $V$  (i.e. $(G,V)$ is a prehomogeneous vector space). We also prove that in the so-called regular case  the $G$-invariant differential operators on the open orbit of a multiplicity free space have always polynomial coefficients (in fact a slightly more general result is proved, see Theorem \ref{th-op-diff-poly}).  
 \vskip 3pt
 In section 3 we introduce the various algebras of differential operators we are interested in.  We define their natural gradings and we define the so-called Bernstein-Sato polynomial of an  homogeneous operator of any degree, not only for degree zero operators as usual. We obtain there the first results concerning these algebras.      
Using the Harish-Chandra isomorphism for multiplicity free spaces (\cite{Knop-2}), we prove a key lemma on invariant polynomials under the so-called little Weyl group which enables us to prove that $D(V)^{G}$ is a polynomial algebra over the center ${\cal Z}({\cal T})$ of $D(V)^{G'}$, with the Euler operator as generator (Theorem \ref{th-key-structure-T0}). We also give generators of  the center ${\cal Z}({\cal T})$ (Theorem \ref{th-Z(T)}) and obtain some specific results in the case of prehomogeneous vector spaces  of commutative parabolic type (Theorem \ref{th-special-commutatif}).   
 \vskip 3pt

 Section 4, which is the main section,  is devoted to the structure of $D(V)^{G'}$. We first briefly define and study the Smith algebras over a commutive ring ${\bf A}$ with unit and no zero divisors (the original definition by Smith was over $\C$). These algebras are defined by generators and relations (involving a polynomial in ${\bf A}[t]$),  and their center is a polynomial algebra ${\bf A}[\Omega_{1}]$, where $\Omega_{1}$ is a generalized Casimir element. Our main result   asserts that $D(V)^{G'}$ is isomorphic to the quotient of a Smith algebra over its center ${\cal Z}({\cal T})$ by the two-sided ideal generated by the element Ê$\Omega_{1}$. Concretely, we give generators and relations for $D(V)^{G'}$ (see Theorem \ref{th-generateurs-relations}).  
 
  \vskip 5pt
  Section 5 is devoted to the study of the algebras of radial components. By radial component of a differential operator in $D(V)^{G'}$ we mean the restriction of $D$ to a $G'$-isotypic component of $\C[V]$. As a corollary of the preceding results we prove that these algebras are quotients of "classical" Smith algebras, that is Smith algebras over $\C$ (see Theorem \ref{th-radial-components}). Of course the defining relations depend on the $G'$-isotypic component. We also give generators of the kernel of the radial component map.  In the case of the trivial representation of $G'$, the structure of the algebra of radial components was first obtained by Levasseur (\cite{Levasseur}), by other methods.

  % \vskip 5pt 
   %Part of the  results of this paper were announced in \cite{Rubenthaler-note-MF}.

  \vskip 5pt
 \noindent {\bf Acknowledgment:} I would like to thank Thierry Levasseur for providing me with the manuscript of \cite{Levasseur}.   I would also like to thank Sylvain Rubenthaler who provided me with a first proof of Proposition \ref{prop.U-contient-pol} which was important for my understanding.
%%%%%%%%%%%%%%%%%%%%%%%%%%%%%%%%%%%%%%%%%%%%%%%%%%%%%%%%%%%%%%%%%%%%%%%%%%%%
\vskip 10pt

%%%%%%%%%%%%%%%%%%%%%%%%%%%%%%%%%%%%%%%%%%%

\section{Multiplicity free spaces with a one dimensional quotient}

%%%%%%%%%%%%%%%%%%%%%%%%%%%%%%%%%%%%%%%%%%%%%%%

%%%%%%%%%%%%%%%%%%%%%%%%%%%%%%%%%%%%%%
\vskip 5pt
\subsection {Prehomogeneous Vector Spaces. Basic definitions and properties} \hfill
 \vskip5pt

  Let $G$ be a  connected algebraic group over ${\bb C}$, and let $(G,\rho, V)$ be a   rational representation of $G$ on the (finite dimensional) vector space $V$. Then the triplet $(G,\rho,V)$ is called a {\it prehomogeneous vector space} (abbreviated to $PV$) if the action of $G$ on $V$ has a Zariski open orbit $\Omega \subset V$.   For the general theory of $PV$'s, we refer the reader to the book  of Kimura \cite{Kimura-book} or to \cite{Sato-Kimura}. The elements in $\Omega$ are called {\it generic}. The $PV$ is said to be {\it irreducible } if  the corresponding representation is irreducible. The {\it singular set} $S$ of  $(G,\rho,V)$ is defined by $S=V\setminus \Omega$. Elements in $S$ are called {\it singular}. If no confusion can arise we often simply denote the $PV$ by $(G,V)$. We will also   write $g.x$ instead of $\rho(g)x$, for $g\in G$ and $x\in V$. It is easy to see that the condition for a rational representation $(G,\rho,V)$ to be a $PV$ is in fact an infinitesimal condition. More precisely let ${\go g}$ be the Lie algebra of $G$ and let $d\rho$ be the derived representation of $\rho$. Then $(G,\rho,V)$ is a PV if and only if there exists $v\in V$ such that the map:
$$\begin{array}{rcl}
{\go g}&\longrightarrow&V\\
X&\longmapsto&d\rho(X)v
\end{array}$$
is surjective (we will often write $X.v$ instead of $d\rho(X)v$). Therefore we will call $({\go g}, V)$ a $PV$ if the preceding condition is satisfied.

Let $(G,V)$ be  a $PV$. A rational function $f$ on $V$ is called a {\it relative invariant} of    $(G,V)$ if there exists a rational character $\chi $ of $G$ such that $f(g.x)=\chi(g)P(x)$ for $g\in G$ and $x\in V$.  From the existence of an open orbit it is easy to see that a character $\chi$ which is trivial on the isotropy subgroup of an element  $x\in \Omega$ determines a unique    relative invariant $P$. Let $S_{1},S_{2},\dots,S_{k}$ denote the irreducible components of codimension one of the singular set $S$. Then there exist irreducible polynomials $P_{1}, P_{2},\dots,P_{k}$ such that $S_{i}=\{x\in V\,|\, P_{i}(x)=0\}$. The polynomials $P_{i}$'s are unique up to nonzero constants, they  are relative invariants of $(G,V)$ and any nonzero relative invariant $f$ can be written in a unique way $f=cP_{1}^{n_{1}}P_{2}^{n_{2}}\dots P_{k}^{n_{k}}$, where $n_{i}\in {\bb Z}$ and $c\in \C^*$ . The polynomials $P_{1}, P_{2},\dots,P_{k}$ are called the {\it fundamental relative invariants} of $(G,V)$. Moreover if the representation $(G,V)$ is irreducible then  there exists at most one irreducible polynomial which is relatively invariant.

 The prehomogeneous vector space  $(G,V)$ is called {\it regular} if there exists a relative invariant polynomial $P$ whose Hessian $H_{P}(x)$ is nonzero on $\Omega$. If $G$ is reductive, then $(G,V)$ is regular if and only if the singular set $S$ is a hypersurface, or if and only if the isotropy subgroup of a generic point  is reductive. If the $PV$ $(G,V)$ is regular, or if $G$ is reductive, then the contragredient representation $(G,V^*)$ is again a $PV$.
 
 \vskip 10pt
%%%%%%%%%%%%%%%%%%%%%%%%%%%%%%%%%%%%%%

\subsection{Multiplicity free spaces}\hfill
\vskip 5pt

 For the results concerning multiplicity free spaces we refer the reader to the survey by Benson and Ratcliff (\cite{Benson-Ratcliff-survey}) or to \cite{Knop-2} .  
Let $(G,V)$ be a finite dimensional rational  representation of a  connected reductive algebraic group $G$. Let $\C[V]$ be the algebra of polynomials on $V$. Then $G$ acts on $\C [V]$ by 
$$g.\varphi(x)=\varphi(g^{-1}x)\qquad(g\in G, \varphi\in \C[V]).$$
As the space $\C[V]^n$ of homogeneous polynomials of degree $n$ is stable under this action, the representation $(G,\C[V])$ is completely reducible.
Let  $D(V)$ be the algebra of differential operators on $V$ with polynomial coefficients. The group $G$ acts also on $D(V)$ by 
$$(g.D)(\varphi)= g.(D(g^{-1}.\varphi))\quad (g\in G, D\in D(V),  \varphi\in \C[V]).$$
Recall the  $G$-equivariant identifications between $\C[V]$ and the symmetric algebra $S(V^*)$ of the dual space $V^*$ and between $\C[V^*]$ and the symmetric algebra $S(V)$ of $V$. The embedding $ \begin{array}{ccc} V& \longrightarrow& D(V)\\
v&\longmapsto &D_{v}
\end{array}$
where $D_{v}P(x)=\lim_{t\rightarrow 0} \frac{P(x+tv)-P(x)}{t}$ extends  uniquely  to an embedding $S(V)\longrightarrow D(V)$ whose image is the ring of differential operators with constant coefficients. If $f\in S(V)\simeq \C[V^*]$ we denote by $f(\partial)$ the corresponding differential operator. Another way to construct $f(\partial)$ for $f\in\C[V^*]$ is to say that $f(\partial)$ is the unique differential operator on $V$ satisfying 
$$f(\partial_{x})e^{\langle x,y \rangle}=f(y)e^{\langle x,y \rangle}\quad (x\in V, y\in V^*)\eqno (2-2-1)$$ 

Recall also that the $\C[V]$-module $D(V)$ can be identified with $\C[V]\otimes S(V)$ through the multiplication map 
$$\begin{array}{ccl}
{}&\simeq&\\
m:\C[V]\otimes S(V)&\longrightarrow&D(V)\\
\varphi\otimes f&\longmapsto& \varphi f(\partial)
\end{array}$$
The preceding map is in fact $G$-equivariant and therefore the $G$-module $D(V)$ is isomorphic to the $G$-module $\C[V]\otimes S(V)$. The duality pairing $V\otimes V^*\longrightarrow {\C}$ extends uniquely to the   non-degenerate $G$-equivariant pairing 
$$\begin{array}{rcl}S(V)\otimes S(V^*)\simeq\C[V^*]\otimes \C[V]&\longrightarrow& \C\\
f\otimes \varphi&\longmapsto&\langle f, \varphi \rangle = f(\partial)\varphi(0) 
\end{array}\eqno(2-2-2)$$
which gives rise to an embedding $\C[V^*]\hookrightarrow \C[V]^*$. It is easy to see that $\langle \C^{i}[V^*],\C^j[V]\rangle =\{0\}$ if $i\neq j$.

\begin{definition} Let $G$ be a  connected reductive algebraic group, and let $V$ be the space of a finite dimensional (complex) rational representation of $G$. The representation $(G,V)$ is said to be multiplicity free (abbreviated to $MF$) if each irreducible representation of $G$ occurs at most once in the representation $(G, \C[V])$.
\end{definition}
\vskip 5pt

 \begin{rem}\label{rem-classification-MF}
 
   Historically the classification of $MF$ spaces goes as follows.  Kac (\cite{Kac}) determined all the $MF$  spaces where the representation $(G,V)$ is irreducible. Brion (\cite{Brion}) did the case where $G'=[G,G]$ is (almost) simple. Finally Benson-Ratcliff and Leahy classified independently, up to geometric equivalence, all the indecomposable saturated MF-spaces (see \cite{Benson-Ratcliff-survey}, \cite{Benson-Ratcliff-article},
  \cite{Leahy},\cite{Knop-2})
 \end{rem}
 
 \medskip

The following theorem summarizes  some  basic results concerning $MF$  spaces (see \cite{Benson-Ratcliff-survey}, \cite{Howe-Umeda}, \cite{Knop-2}):
\begin{theorem}\label{th-caracterisation-MF}\hfill

$1)$  A finite dimensional representation $(G,V)$ is $MF$ if and only if $(B,V)$ is a  prehomogeneous vector space for any Borel subgroup $B$ of $G$ $($and hence each $MF$ space $(G,V)$  is a $PV$$)$.

$2)$  A finite dimensional representation $(G,V)$ is $MF$ if and only if the algebra   $D(V)^G$ of invariant differential operators with polynomial coefficients is commutative.

$3)$ If $(G,V)$ is a MF space, then the dual space $(G,V^*)$  is also MF.
\end{theorem}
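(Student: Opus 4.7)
The plan is to establish the three equivalences using standard tools from representation theory: the Vinberg--Kimel'fel'd characterization of spherical varieties for (1), Schur's lemma together with a polarization argument for (2), and duality of $G$-modules for (3).

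For (1), I fix a Borel subgroup $B = TU$ of $G$ with maximal torus $T$ and unipotent radical $U$. By the highest weight theorem, for each irreducible $G$-module $V_\lambda$ of highest weight $\lambda$,
$$\dim \Hom_G(V_\lambda,\C[V]) = \dim \C[V]^U_\lambda,$$
where $\C[V]^U_\lambda$ denotes the $\lambda$-weight space of $T$ in $\C[V]^U$. Hence $(G,V)$ is $MF$ if and only if every $T$-weight of $\C[V]^U$ is simple. If $(B,V)$ is a $PV$ with open orbit $\Omega_B$, then any $B$-semi-invariant polynomial is determined by its weight up to scalar (evaluate at a fixed $x_0 \in \Omega_B$), so the $T$-weights of $\C[V]^U$ are simple, giving $MF$. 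The converse — that $MF$ forces $(B,V)$ to be a $PV$ — is exactly the content of Vinberg--Kimel'fel'd: an integral domain graded by a free abelian monoid, with one-dimensional weight spaces, has transcendence degree equal to $\dim V$, forcing the existence of an open $B$-orbit.

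For (2), the definition of the $G$-action on $D(V)$ recalled in Section 2.2 yields: $D \in D(V)^G$ if and only if $D$ commutes with the $G$-action on $\C[V]$. In the $MF$ case, decompose $\C[V] = \bigoplus_\lambda V_\lambda$ into pairwise non-isomorphic irreducible summands; by Schur's lemma, every such $D$ acts as a scalar $c_\lambda(D) \in \C$ on $V_\lambda$, so $D(V)^G$ embeds into $\prod_\lambda \C$ and is commutative. For the converse, assume some $V_\lambda$ appears in $\C[V]$ with multiplicity $m \geq 2$; then by (3), $V_\lambda^*$ appears in $\C[V^*]$ with the same multiplicity. Via the $G$-equivariant identification $D(V) \simeq \C[V] \otimes \C[V^*]$, the space of invariants $(\C[V] \otimes \C[V^*])^G$ contains an $m^2$-dimensional subspace obtained by pairing the $V_\lambda$- and $V_\lambda^*$-isotypic components to the trivial representation; a polarization argument in the spirit of Howe--Umeda then realizes $M_m(\C) \simeq \operatorname{End}_G(V_\lambda^{\oplus m})$ as a subalgebra of the image of $D(V)^G$ in $\operatorname{End}_G(\C[V])$, so $D(V)^G$ cannot be commutative.

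For (3), the pairing $(2-2-2)$ between $S(V^*) \simeq \C[V]$ and $S(V) \simeq \C[V^*]$ is $G$-equivariant and non-degenerate on each graded piece, so $\C^n[V]$ and $\C^n[V^*]$ are dual $G$-modules. Hence
$$\dim \Hom_G(V_\mu, \C^n[V]) = \dim \Hom_G(V_\mu^*, \C^n[V^*])$$
for every irreducible $V_\mu$ and every $n \geq 0$; since $\mu \mapsto \mu^*$ is a bijection of irreducibles of the reductive group $G$, the $MF$ property for $(G,V)$ is equivalent to that for $(G,V^*)$. The main obstacle is the converse in (2): exhibiting enough non-commuting invariant operators from higher multiplicity requires the polarization/Howe--Umeda step, whereas the remaining implications are essentially formal consequences of Schur's lemma and highest weight theory.
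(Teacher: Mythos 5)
Your proposal takes essentially the same route as the paper: the paper also proves this theorem by citing Vinberg--Kimel'fel'd for $(1)$, Howe--Umeda for $(2)$, and the duality argument via the pairing $(2\text{-}2\text{-}2)$ for $(3)$, which is exactly what you do for the hard directions. The extra detail you supply for the easy implications (uniqueness of $B$-semi-invariants of a given weight on a $PV$ in $(1)$, Schur's lemma in $(2)$) is correct and matches remarks the paper makes around the theorem.
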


\begin{proof}

The first assertion is due to Vinberg and Kimelfeld (\cite{Vinberg-Kimelfeld}), another proof can be found in \cite{Knop-2} .
The second assertion is due to Howe and Umeda (\cite{Howe-Umeda}, Theorem 7.1). For the third assertion note  that as $\langle \C^{i}[V^*],\C^j[V]\rangle =\{0\}$ for $i\neq j$, we obtain that  $f\mapsto \langle f,\,\,\rangle$ is a $G$-equivariant isomorphism between $\C^{i}[V^*]$ and $\C^{i}[V]^*$, and hence $(G,V^*)$ is multiplicity free.

\end{proof}
  
 \vskip 5pt

 Let us be more precise about the decomposition of the polynomials under the action of the group $G$ or a Borel subgroup.  Therefore we need more notations. We can write $G= G'C$ where $G'=[G,G]$ is the subgroup of commutators, and where $C=Z(G)^\circ\simeq {(\C^*)}^p$ is the connected component of the center of $G$. Let $T'$ be a maximal torus in $G'$, and let $B'=T'U$ be a Borel subgroup of $G'$, where $U$ is the nilradical of $B'$. The group $T=T'C$ is a maximal torus in $G$ and  $B=TU$ is a Borel subgroup of $G$. We will denote by ${\go g}, {\go g'}, {\go t}, {\go t'}, {\go c}, {\go b}, {\go b'}, {\go u}$ the corresponding Lie algebras.
Let $R$ be the set of roots of $({\go g}',{\go t}')$, let $\Delta=\{\alpha_{1},\dots,\alpha_{\ell}\}$ be the basis  of simple  roots corresponding to ${\go b}'$ and let $R^+$ be the corresponding set of positive roots.

Denote by  $\Lambda'$  the lattice of weights of $({\go g}',{\go t}')$. We have $\Lambda'=\Z\omega_{1}\oplus \Z\omega_{2}\oplus\dots\oplus \Z\omega_{\ell}$ where the $\omega_{i}'s$ are the fundamental weights. Let $\Lambda'^{+}=\N\omega_{1}\oplus \N\omega_{2}\oplus\dots\oplus \N\omega_{\ell}$ be the  the set of dominant weights. Denote by $X(C)$ the group of algebraic characters of $C$, which we will sometimes  consider as linear forms on ${\go c}$. Set:
$$\Lambda=\Lambda'\oplus X(C),\quad  \Lambda^+=\Lambda'^+\oplus X(C).$$
For $\lambda\in \Lambda^+$ (resp. $\lambda'\in \Lambda'^+$) let us denote by $V_{-\lambda}$ (resp. $V_{-\lambda'}$) an irreducible ${\go g}$-module (resp. ${\go g}'$-module) with highest weight $\lambda $ (resp. $\lambda'$). We use this unusual notation because we want to index the modules occuring in $\C[V]$ by the character of their highest weight polynomial, rather than by the highest weight.
 
 For a multiplicity free space  $(G,V)$ we have the decomposition:
 $$ \C[V]= \bigoplus_{\lambda\in \Lambda^+}V_{-\lambda}^{m(\lambda)}$$
 where $m(\lambda)=0$ or $1$. If $m(\lambda)=1$, then there exists a uniquely defined positive integer $d(\lambda)$ such that $V_{-\lambda}\in \C[V]^{d(\lambda)}$. The integer  $d(\lambda)$ is called the {\it
  degree} of $\lambda$. Let us  denote by $\Delta_{0},\Delta_{1}, \dots,\Delta_{k}, \dots,\Delta_{r}$  the fundamental relative invariants invariants of the $PV $ $(B,V)$, indexed in such a way that $\Delta_{0}, \Delta_{1},\dots,\Delta_{k}$ are the fundamental relative invariants of the $PV$ $(G,V)$ and such that the other invariants are ordered by decreasing degree. We denote by $d_{i}$ the degree of $\Delta_{i}$ ($i=0,\dots,r$). It is worthwhile noticing that at least $\Delta_{r}$ is of degree one as the highest weight vectors of the irreducible components of $V^*$ must occur.    Then any relative invariant of $(B,V)$ is of the form $c\Delta^{\bf a}$ where ${\bf a}=(a_{0},a_{1},\dots,a_{r})\in \Z^{r+1}$ and where $\Delta^{\bf a}=\Delta_{0}^{a_{0}}\dots\Delta_{r}^{a_{r}}$. The non negative integer $r+1$ is called the {\it rank} of the $MF$ space $(G,V)$.  The algebra of $U$-invariants is the subalgebra generated by the $\Delta_{i}$'s, i.e.  $\C[V]^U=\C[\Delta_{0},\dots,\Delta_{r}]$. As the polynomials $\Delta_{i}$ are algebraically independent, this latter algebra is a polynomial algebra. Let  $\lambda_{i}$ be the character of $\Delta_{i}$ (we use the same notation $\lambda_{i}$ for the character of the group and for its derivative, which is  an element of $\Lambda^+$). Hence the (infinitesimal) character of $\Delta^{\bf a}$ is $\lambda_{\bf a}= a_{0}\lambda_{0}+\dots+a_{r}\lambda_{r}$. Of course by definition the elements $\Delta^{\bf a}$    ($a_{i}\geq 0, i=0,\dots,r$) are the highest weights vectors in $\C[V]$. Due to the fact that the group action on $\Delta^{\bf a}$ is given by $g.\Delta^{\bf a}(x)= \Delta^{\bf a}(g^{-1}x)$, the infinitesimal highest weight of $\Delta^{\bf a}$ is $-\lambda_{\bf a}= -a_{0}\lambda_{0}-\dots-a_{r}\lambda_{r}$.  
  
  If we set $V_{\bf a}=V_{-\lambda_{\bf a}}$, we therefore can write 
$$\C[V]=\bigoplus _{a_{0}\geq0,\dots,a_{r}\geq0}V_{\bf a} \eqno (2-2-3)$$
Sometimes, if $\lambda= a_{0}\lambda_{0}+\dots+a_{r}\lambda_{r}$, we simply write $V_{\lambda}$  instead of $V_{\bf a}$.
If we denote by $d_{i}$ the degree of $\Delta_{i}$, one can notice that all elements in $V_{\bf a}$ are of  degree  $d({\bf a})=a_{0}d_{0}+a_{1}d_{1}+\dots+a_{r}d_{r}$. It is also worthwhile noticing that  we have:
$$V_{\bf a}=\Delta_{0}^{a_{0}}\Delta_{1}^{a_{1}}\dots\Delta_{k}^{a_{k}}V_{0,\dots,0,a_{k+1},\dots,a_{r}}. \eqno(2-2-4)$$

\medskip

The proof of the following lemma is straightforward.

\begin{lemma}\label{lemma-decomp-fractions}\hfill

 Define ${\cal O}=\{x\in V\,|\, \Delta_{i}(x)\neq 0, i=0,\dots,k\}$. Let $\C[{\cal O}]$ be the ring of regular functions on ${\cal O}$ $($elements of $\C[{\cal O}]$ are just rational functions whose denominators are of the form $\Delta_{0}^{a_{0}}\dots\Delta_{k}^{a_{k}}$, with $a_{0},\dots,a_{k}\geq 0$$) $. As the polynomials $\Delta_{0},\dots,\Delta_{k}$ are relative invariants under $G$, the open set ${\cal O}$ is $G$-stable, and therefore $G$ acts on $\C[{\cal O}]$. Then $\C[{\cal O}]$ decomposes without multiplicities under the action of $G$. More precisely the decomposition into irreducibles is given by
$$\C[{\cal O}]=\bigoplus_{\begin{array}{c}
(a_{0},\dots,a_{k})\in \Z^{k+1}\\
(a_{k+1},\dots,a_{r})\in \N^{r-k}
\end{array}}V_{\bf a}$$
where $V_{\bf a}=\Delta_{0}^{a_{0}}\Delta_{1}^{a_{1}}\dots\Delta_{k}^{a_{k}}V_{0,\dots,0,a_{k+1},\dots,a_{r}}$ is the irreducible subspace of $\C[{\cal O}]$ generated by the highest weight vector $\Delta^{\bf a}=\Delta_{0}^{a_{0}}\Delta_{1}^{a_{1}}\dots\Delta_{r}^{a_{r}}$.  

\end{lemma}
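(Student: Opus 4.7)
The plan is to lift the multiplicity-free polynomial decomposition (2-2-3) across the localization $\C[V]\hookrightarrow\C[\Omega]$. For each ${\bf a}=(a_0,\ldots,a_r)\in\Z^{k+1}\times\N^{r-k}$ I set $V_{\bf a}:=\Delta_0^{a_0}\cdots\Delta_k^{a_k}\,V_{0,\ldots,0,a_{k+1},\ldots,a_r}\subset\C[\Omega]$, which makes sense because $\Delta_0,\ldots,\Delta_k$ are units in $\C[\Omega]$. Since these $\Delta_i$ are $G$-relative invariants, multiplication by $\Delta_0^{a_0}\cdots\Delta_k^{a_k}$ is a bijective linear map on $\C[\Omega]$ that is $G$-equivariant up to the character $a_0\lambda_0+\cdots+a_k\lambda_k$, so $V_{\bf a}$ is an irreducible finite-dimensional $G$-submodule of $\C[\Omega]$ whose highest weight (with respect to $B$) is $-\lambda_{\bf a}$. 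When ${\bf a}\in\N^{r+1}$ this recovers the $V_{\bf a}$ of (2-2-3), thanks to (2-2-4).

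To show the sum $\sum_{\bf a}V_{\bf a}$ exhausts $\C[\Omega]$, I take $\varphi\in\C[\Omega]$ and write $\varphi=P/(\Delta_0^{n_0}\cdots\Delta_k^{n_k})$ with $P\in\C[V]$ and $n_i\geq 0$. Decomposing $P=\sum_{{\bf b}\in\N^{r+1}}P_{\bf b}$ by (2-2-3) and factoring each $P_{\bf b}=\Delta_0^{b_0}\cdots\Delta_k^{b_k}Q_{\bf b}$ with $Q_{\bf b}\in V_{0,\ldots,0,b_{k+1},\ldots,b_r}$ by (2-2-4), each quotient $P_{\bf b}/(\Delta_0^{n_0}\cdots\Delta_k^{n_k})$ lies in $V_{(b_0-n_0,\ldots,b_k-n_k,b_{k+1},\ldots,b_r)}$, and the finiteness of the decomposition of $P$ gives $\varphi\in\sum_{\bf a}V_{\bf a}$.

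For directness it suffices to show that the $V_{\bf a}$'s are pairwise non-isomorphic $G$-modules, after which the standard Schur/complete-reducibility argument (any minimal dependence relation, projected onto components via the sum so far, yields a nonzero $G$-morphism between inequivalent irreducibles) forces the sum to be direct. Non-isomorphism reduces to the injectivity of ${\bf a}\mapsto\lambda_{\bf a}$ on $\Z^{k+1}\times\N^{r-k}$, which in turn follows from the $\Z$-linear independence of $\lambda_0,\ldots,\lambda_r$. That independence I would establish as follows: given $\sum c_i\lambda_i=0$ with $c_i\in\Z$, split $c_i=c_i^+-c_i^-$ into non-negative parts; then $\Delta^{{\bf c}^+}$ and $\Delta^{{\bf c}^-}$ are $U$-invariant polynomials of the same torus weight, so their ratio is a $B$-invariant rational function on $V$, hence a constant by the prehomogeneity of $(B,V)$, and the algebraic independence of $\Delta_0,\ldots,\Delta_r$ then forces ${\bf c}^+={\bf c}^-$, i.e., ${\bf c}=0$.

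The main obstacle is this last injectivity step: the index set $\Z^{k+1}\times\N^{r-k}$ is strictly larger than the $\N^{r+1}$ of (2-2-3), so multiplicity-freeness of $\C[V]$ does not immediately transfer to $\C[\Omega]$; one genuinely needs $\Z$-linear independence of the characters $\lambda_0,\ldots,\lambda_r$, which reduces to the algebraic independence of the fundamental relative invariants of $(B,V)$ combined with the fact that $B$ has an open orbit. Once that is in hand, the remaining arguments are routine transport of (2-2-3) and (2-2-4) across the localization.
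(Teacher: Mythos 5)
Your proof is correct and follows the same plan as the paper's: localize the multiplicity-free decomposition (2-2-3) by the units $\Delta_0,\dots,\Delta_k$, identify each $V_{\bf a}$ as an irreducible $G$-submodule (the twist by a character preserves irreducibility), obtain exhaustion by clearing denominators, and derive directness from the distinctness of the highest weights $-\lambda_{\bf a}$. The only place where you add substance beyond the paper's terse argument is the justification that ${\bf a}\mapsto\lambda_{\bf a}$ is injective on $\Z^{k+1}\times\N^{r-k}$; your route via algebraic independence of $\Delta_0,\dots,\Delta_r$ and the open $B$-orbit is valid, but you could shortcut it: multiplicity-freeness of $\C[V]$ already gives injectivity of ${\bf a}\mapsto\lambda_{\bf a}$ on $\N^{r+1}$, and for any integer relation $\sum c_i\lambda_i=0$ splitting $c_i=c_i^+-c_i^-$ and comparing $\sum c_i^+\lambda_i=\sum c_i^-\lambda_i$ with both sides in $\N^{r+1}$ immediately forces ${\bf c}^+={\bf c}^-$, i.e.\ $\Z$-linear independence, without invoking algebraic independence or prehomogeneity again.
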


%\begin{proof}

%It is clear that the spaces $V_{\bf a}$ in the decomposition are $G$-irreducible. As they are generated by the highest weight vectors $\Delta^{\bf a}$ which have distinct weights, the sum is direct. Conversely let  $f\in \C[{\cal O}]$. Without lack of generality we can suppose that   $f= \frac{P}{\Delta_{0}^{a_{0}}\dots\Delta_{k}^{a_{k}}}$, with $a_{0},\dots,a_{k}\geq 0$, and with $P\in V_{\bf b}$ where ${\bf b}\in \N^{r+1}$. Hence $f\in V_{\bf b'}$ where ${\bf b'}={\bf b}-(a_{0},\dots,a_{k},0,\dots,0)\in \Z^{k+1}\times \N^{r-k}$.

%\end{proof}
\begin{rem} We want to draw the attention of the reader to the fact that if $(G,V)$ is not a regular $PV$, then the open set ${\cal O} $ may be distinct from the open $G$-orbit $\Omega$.

\end{rem}
The preceding  lemma has the following consequence. 
\begin{theorem} \label{th-op-diff-poly}\hfill 

Let $(G,V)$ be a multiplicity free space.  As before set  ${\cal O}=\{x\in V\,|\, \Delta_{i}(x)\neq 0, i=0,\dots,k\}$. Then $D(V)^G=D({\cal O})^G$. In other words any $G$-invariant differential operator with coefficients in $\C[{\cal O}]$ has in fact polynomial coefficients.

\end{theorem}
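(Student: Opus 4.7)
The inclusion $D(V)^G \subseteq D(\Omega)^G$ is immediate since polynomial coefficients restrict to regular functions on $\Omega$, so all the content lies in proving the reverse inclusion. My plan is to combine Lemma \ref{lemma-decomp-fractions} with Schur's lemma to show that any $D\in D(\Omega)^G$ stabilizes $\C[V]\subseteq \C[\Omega]$, and then to extract polynomiality of the coefficients of $D$ by testing on monomials.

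Fix $D\in D(\Omega)^G$. By Lemma \ref{lemma-decomp-fractions}, the $G$-module $\C[\Omega]$ decomposes without multiplicities as $\bigoplus_{\bf a}V_{\bf a}$, with $(a_0,\dots,a_k)\in\Z^{k+1}$ and $(a_{k+1},\dots,a_r)\in\N^{r-k}$, and each $V_{\bf a}$ is irreducible. Since $D$ is $G$-equivariant it preserves every isotypic component; as multiplicities are one, this means $D$ preserves each individual $V_{\bf a}$, and Schur's lemma yields scalars $c_{\bf a}\in\C$ with $D|_{V_{\bf a}}=c_{\bf a}\,\operatorname{id}$. The polynomial subring $\C[V]=\bigoplus_{{\bf a}\in\N^{r+1}}V_{\bf a}$ is the sum of those $V_{\bf a}$ with non-negative exponents, so it is a sub-$G$-module of $\C[\Omega]$, whence $D(\C[V])\subseteq \C[V]$.

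To conclude I show that any $D\in D(\Omega)$ with $D(\C[V])\subseteq \C[V]$ has polynomial coefficients. Writing $D=\sum_\alpha\varphi_\alpha\,\partial^\alpha$ as a finite sum with $\varphi_\alpha\in\C[\Omega]$ in some linear coordinates on $V$, applying $D$ to a monomial gives
$$D(x^\beta)\;=\;\sum_{\alpha\leq\beta}\frac{\beta!}{(\beta-\alpha)!}\,\varphi_\alpha(x)\,x^{\beta-\alpha}\;\in\;\C[V].$$
I induct on $|\beta|$: the case $\beta=0$ gives $\varphi_0=D(1)\in\C[V]$, and at step $\beta$ the identity above expresses $\beta!\,\varphi_\beta(x)$ as $D(x^\beta)$ minus a $\C[V]$-linear combination of the $\varphi_\alpha$ with $\alpha<\beta$, which are polynomial by the inductive hypothesis. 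Hence $\varphi_\beta\in\C[V]$ for every multi-index $\beta$ occurring in $D$, and $D\in D(V)$.

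The one conceptual step --- and the only place where the specific hypotheses enter --- is the first: it is multiplicity freeness of $\C[\Omega]$, given by the preceding lemma, that forces the stabilization $D(\C[V])\subseteq\C[V]$. The subsequent extraction of polynomial coefficients is a mechanical bookkeeping and would work verbatim for any differential operator on $\Omega$ stabilizing $\C[V]$, regardless of $G$-invariance; accordingly, the real substance of the proof sits inside Lemma \ref{lemma-decomp-fractions}.
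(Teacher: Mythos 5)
Your proof is correct and takes essentially the same route as the paper: both use Lemma \ref{lemma-decomp-fractions} and $G$-equivariance to conclude that $D$ preserves each $V_{\bf a}$ and hence stabilizes $\C[V]$, and the paper then merely asserts that a differential operator with rational coefficients stabilizing $\C[V]$ must have polynomial coefficients, a step you usefully make explicit by the induction on monomials.
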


\begin{proof}

Let $D\in D({\cal O})^G$. As we know from the preceding Lemma that $\C[{\cal O}]$ decomposes without multiplicities under $G$, we obtain that $D$ defines a $G$-equivariant endomorphism on each $V_{\bf a}$, ${\bf a}\in \Z^{k+1}\times \N^{r-k}$. Therefore $D$ stabilizes $\C[V]=\bigoplus _{a_{0}\geq0,\dots,a_{r}\geq0}V_{\bf a}$. It is easy to see that a differential operator with rational coefficients and which stabilizes the polynomials must have polynomial coefficients.

\end{proof}

\begin{rem}\label{rem-Jordan}   Let $V$ be a simple Jordan algebra over $\C$ or $\R$. Let $\Omega$ be the set of invertible elements in $V$ and let  $G$ be the structure group of $V$. It is known  that $(G,V)$ is a multiplicity free space with $\Omega$ as open $G$-orbit. Then the preceding  theorem  implies that $D(V)^G=D(\Omega)^G$.  This result was already known in this context and is  usually obtained by computing  an explicit set of generators of $D(\Omega)^G$ (see \cite{Nomura}, \cite{Yan} or  \cite{Faraut-Koranyi-book}).  Through the so-called Kantor-Koecher-Tits construction there is a one-to-one correspondence between these spaces and  the $PV$'s of commutative parabolic type (see  example \ref{ex-commutative-PV} below).

\end{rem}

\begin{prop}\label{prop-G'-isotypic} \hfill

Let $(G,V)$ be a $MF$ space.
For $\widetilde{\bf a}=(a_{k+1},\dots,a_{r})\in \N^{r-k}$ we define $V_{\widetilde{\bf a}}=V_{(0,\dots,0,a_{k+1},\dots,a_{r})}$. Then for ${\bf {a}}=(a_{0},\dots,a_{k},a_{k+1},\dots,a_{r})$ the spaces $V_{\bf a}=\Delta_{0}^{a_{0}}\dots \Delta_{k}^{a_{k}}V_{\bf {\tilde a}}$ are $G'$-equivalent if ${\bf {\tilde a}}$ is fixed and if $(a_{0},\dots,a_{k})\in \Z^ {k+1}$. If we define
$$U_{{\bf {\tilde a}}}=\bigoplus_{(a_{0},\dots,a_{k})\in \N^{k+1}}\Delta_{0}^{a_{0}}\dots \Delta_{k}^{a_{k}}V_{\bf {\tilde a}},\quad  W_{{\bf {\tilde a}}}=\bigoplus_{(a_{0},\dots,a_{k})\in \Z^{k+1}}\Delta_{0}^{a_{0}}\dots \Delta_{k}^{a_{k}}V_{\bf {\tilde a}},$$
then the decompositions of $\C[V]$ and $\C[\cal O]$ into $G'$-isotypic components are given by 
$$\C[V]= \bigoplus_{{\bf \tilde a}}U_{{\bf \tilde a}}, \quad \C[{\cal O}]= \bigoplus_{{\bf \tilde a}}W_{{\bf \tilde a}}$$
\end{prop}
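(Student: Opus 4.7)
The plan is to assemble the statement from three ingredients: (i) each $V_{\bf a}$ is $G'$-irreducible; (ii) multiplication by $\Delta_{0}^{a_{0}}\cdots\Delta_{k}^{a_{k}}$ is a $G'$-equivariant bijection from $V_{\tilde{\bf a}}$ onto $V_{\bf a}$; (iii) for distinct $\tilde{\bf a}$ and $\tilde{\bf a}'$ in $\N^{r-k}$, the $G'$-modules $V_{\tilde{\bf a}}$ and $V_{\tilde{\bf a}'}$ are non-isomorphic. Step (i) is immediate from $G=G'\cdot C$, because the connected center $C$ acts by a scalar character on any $G$-isotypic piece, so irreducibility for $G$ and for $G'$ coincide. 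Step (ii) uses that $\Delta_{0},\ldots,\Delta_{k}$ are $G$-relative invariants, with characters that are derivatives of rational characters of $G$; since $G'=[G,G]$, every such character is trivial on $G'$, so each $\Delta_{i}$ ($i\leq k$) is actually $G'$-invariant. Multiplication by $\Delta_{0}^{a_{0}}\cdots\Delta_{k}^{a_{k}}$ is therefore a $G'$-equivariant isomorphism onto its image (injectivity on $\C[\Omega]$ is obvious, and the image is exactly $V_{\bf a}$ by $(2\text{-}2\text{-}4)$).

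The crux is step (iii). The $\go g$-highest weight of $V_{\bf a}$ is $-\lambda_{\bf a}=-\sum_{i=0}^{r}a_{i}\lambda_{i}$, and the $\go g'$-highest weight of $V_{\bf a}$ viewed as a $\go g'$-module is its restriction to $\go t'$. For $i\leq k$ the weight $\lambda_{i}$ is the derivative of a character of $G$, hence vanishes on $\go g'\supset\go t'$; so $-\lambda_{\bf a}|_{\go t'}=-\sum_{i=k+1}^{r}a_{i}\,\lambda_{i}|_{\go t'}$ depends only on $\tilde{\bf a}$. It remains to show that $\lambda_{k+1}|_{\go t'},\ldots,\lambda_{r}|_{\go t'}$ are $\Z$-linearly independent. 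Suppose $\sum_{i=k+1}^{r}c_{i}\lambda_{i}$ vanishes on $\go t'$; equivalently (since $T'$ is connected) the corresponding character $\chi$ of $B$ is trivial on $T'$. Then $\chi$ factors through $T/T'\simeq C/(C\cap T')$, and since $C$ centralizes $G'$ and $G'\cap C\subset Z(G')\subset T'$, the character $\chi$ extends uniquely to a rational character of $G=G'\cdot C$ by being trivial on $G'$. Such a character of $G$ is the character of a monomial in $\Delta_{0},\ldots,\Delta_{k}$, so $\sum_{i=k+1}^{r}c_{i}\lambda_{i}=\sum_{j=0}^{k}d_{j}\lambda_{j}$ in the character group of $B$. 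The $\Z$-linear independence of $\lambda_{0},\ldots,\lambda_{r}$ (a general fact for fundamental relative invariants of a $PV$) forces all $c_{i}$ and $d_{j}$ to vanish.

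Putting things together, the multiplicity-free decomposition $(2\text{-}2\text{-}3)$ of $\C[V]$ and the decomposition of $\C[\Omega]$ from Lemma \ref{lemma-decomp-fractions} are refinements of the $G'$-isotypic decomposition: by (ii), each $U_{\tilde{\bf a}}$ (resp.\ $W_{\tilde{\bf a}}$) is a direct sum of mutually $G'$-isomorphic irreducibles of type $V_{\tilde{\bf a}}$, and by (iii) summands with different $\tilde{\bf a}$ are pairwise non-isomorphic as $G'$-modules. Hence $\bigoplus_{\tilde{\bf a}}U_{\tilde{\bf a}}$ and $\bigoplus_{\tilde{\bf a}}W_{\tilde{\bf a}}$ are precisely the $G'$-isotypic decompositions of $\C[V]$ and $\C[\Omega]$. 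The only real obstacle is the linear-independence argument in (iii); once the character-extension trick is in place, everything else is formal.
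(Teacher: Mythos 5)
Your proof follows the paper's approach: multiplication by $\Delta_0^{a_0}\cdots\Delta_k^{a_k}$ gives the $G'$-equivalences, and each $V_{\bf a}$ is $G'$-irreducible (you make the latter explicit; the paper leaves it tacit). Both you and the paper then reduce to showing that different $\tilde{\bf a}$ give non-isomorphic $G'$-modules by comparing highest-weight characters on $\go t'$ and producing a rational $B$-relative invariant whose character is trivial on $\go t'$.

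The genuine gap is in the sentence "Such a character of $G$ is the character of a monomial in $\Delta_0,\ldots,\Delta_k$." You have extended $\chi$ to a rational character $\tilde\chi$ of $G$ trivial on $G'$, but it is not automatic that every character of $G$, or even every such extension, is the character of a $G$-relative invariant: in general, only those characters trivial on the isotropy subgroup of a generic point arise this way. So you cannot conclude directly that $\tilde\chi=\sum_{j\le k}d_j\lambda_j$ without first knowing that the concrete rational function $f=\prod_{i>k}\Delta_i^{c_i}$ is genuinely $G$-semi-invariant. That is exactly what the paper supplies with its brief "therefore it generates a one dimensional representation, hence $P$ is a relative invariant under $G$": $f$ is a $\go b'$-highest-weight vector with trivial $\go t'$-weight, so the $G$-module it generates restricts to the trivial $G'$-representation and is therefore a line, which is the statement that $f$ is $G$-semi-invariant. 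Once this observation is in place, the identification of $f$ as a monomial in $\Delta_0,\ldots,\Delta_k$ and your $\Z$-linear-independence conclusion do finish the proof.
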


\begin{proof} The map $P\longmapsto \Delta_{0}^{a_{0}}\dots \Delta_{k}^{a_{k}}P$ is a $G'$-equivariant isomorphism between $ V_{\bf {\tilde a}}$
 and 
  $\Delta_{0}^{a_{0}}\dots \Delta_{k}^{a_{k}}V_{\bf {\tilde a}}$, hence all these spaces are $G'$-equivalent. To prove the second assertion it is enough to prove that if ${\bf \tilde a}\neq {\bf \tilde b}$, then the spaces $V_{ {\bf {\tilde a}}}$ and $V_{ {\bf {\tilde b}}}$ are not $G'$-equivalent.  Suppose that this would be the case and let $\Delta^{\bf \tilde a}$ and $\Delta^{\bf \tilde b}$ be the corresponding highest weight vectors with characters $\lambda_{\bf \tilde a}$ and $\lambda_{\bf \tilde b}$ respectively. From the $G'$-equivalence we know that ${\lambda_{\bf \tilde a}}_{|_{{{\go t}'}}}={\lambda_{\bf \tilde b}}_{|_{{{\go t}'}}}$ and hence $P=\frac{\Delta^{\bf \tilde a}}{\Delta^{\bf \tilde b}}$ is a relative invariant under $B$ whose character is trivial on ${\go t}'$. Therefore it generates a one dimensional representation, hence $P$ is a relative invariant under $G$. Finally we obtain that $\Delta^{\bf \tilde a}= \Delta_{0}^{a_{0}}\dots\Delta_{k}^{a_{k}}\Delta^{\bf \tilde b}$, and this is not possible if ${\bf \tilde a}\neq {\bf \tilde b}$.
 
 \end{proof}
 
 As $(G,V^*)$ is  multiplicity free (Theorem \ref{th-caracterisation-MF}) and as   $\C^{i}[V^*]\simeq \C^{i}[V]^*$, we have 
 $$\C[V^*]= \bigoplus_{a_{0}\geq 0,\dots,a_{r}\geq 0}V_{\bf a}^*\eqno(2-2-5)$$
 where $V_{\bf a}^*$ is the irreducible $G$-submodule of $\C[V^*]$ generated by a lowest weight vector ${\Delta^*}^{\bf a}\in \C[V^*]$, defined up to a multiplicative constant,  whose character with respect to the opposite Borel subgroup $B^-$ is equal to $-\lambda_{\bf a}=-a_{0}\lambda_{0}-\dots-a_{r}\lambda_{r}$. Let us  fix   a lowest weight vector $\Delta_{i}^*$ ($i=0,\dots,r$) with character $-\lambda_{i}$ (with respect to $B^{-}$). Then we can choose ${\Delta^*}^{\bf a}={\Delta_{0}^*}^{a_{0}}{\Delta_{1}^*}^{a_{1}}\dots{\Delta_{r}^*}^{a_{r}}$.  Of course the  module $V_{\bf a}^*$ is the dual module of $V_{\bf a}$ through $f\longmapsto \langle f,\,\, \rangle$ (see $(2-2-2))$.
 \vskip 5pt
 
 As $V_{\bf a}$ is a $G$-irreducible module, it is well known that the tensor $G$-module $V_{\bf
a}\otimes V_{\bf a}^*$ contains up to constant, a unique $G$-invariant vector $R_{\bf a}$ and that  $V_{\bf a}\otimes V_{\bf b}^*$ does not contain any non trivial $G$-invariant vector if ${\bf a}\neq {\bf b}$ (see for example
\cite{Howe-Umeda}). To be more precise we define $R_{\bf a}$ to be the operator corresponding to the "unit matrix" in  $V_{\bf
a}\otimes V_{\bf a}^*\simeq \Hom(V_{\bf a},V_{\bf a})$. Moreover as
$  { \bb C}[V]\otimes{ \bb C} [V^*]$ is $G$-isomorphic to ${D}(V)$, the element $R_{\bf a}$
can be viewed as a $G$-invariant differential operator with polynomial coefficients. The operators $R_{\bf a}$
are sometimes called {\it Capelli operators}. They are also called {\it unnormalized canonical invariants} in \cite{Benson-Ratcliff-survey}. Moreover the family of elements  $R_{\bf a}$
 (${\bf a}\in { \bb N} ^{r+1}$)  is a vector basis of the vector space ${ D}(V)^G= {
D}({\cal O})^G$.

The Capelli operators $R_{i}$ corresponding to the space $V_{\lambda_{i}}$ $(i=0,\dots,r)$ will be of particular importance because of the result below.  

\begin{theorem}\label{th-generateurs-op-inv} {\rm(Howe-Umeda)}\hfill

Let $(G,V)$ be a $MF$ space. The Capelli operators $R_{i}$ $(i=0,\dots,r)$ are algebraically independent and $D(V)^G=\C[R_{0},\dots, R_{r}].$

\end{theorem}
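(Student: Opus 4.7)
\emph{Proof proposal.} By the discussion immediately before the theorem, $\{R_{\mathbf a}\}_{\mathbf a \in \N^{r+1}}$ is already a $\C$-basis of $D(V)^G$. The theorem reduces to (i) the algebraic independence of $R_0,\dots,R_r$ and (ii) the fact that each $R_{\mathbf a}$ lies in $\C[R_0,\dots,R_r]$.

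My plan is to pass to the associated graded of the Bernstein filtration. Since $G$ is reductive, $\mathrm{gr}\,D(V)^G=\C[V\oplus V^{\ast}]^G$, and the principal symbols $\sigma(R_{\mathbf b})\in V_{\mathbf b}\otimes V_{\mathbf b}^{\ast}\subset\C[V]\otimes\C[V^{\ast}]$ form a $\C$-basis of this graded ring. The key computation is that each product $\prod_i\sigma(R_i)^{a_i}$ lies in bidegree $(d(\mathbf a),d(\mathbf a))$ with $d(\mathbf a)=\sum_i a_i d_i$, and hence expands as $\sum_{d(\mathbf b)=d(\mathbf a)}\gamma_{\mathbf b}\,\sigma(R_{\mathbf b})$ with $\gamma_{\mathbf a}\neq 0$. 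The nonvanishing of $\gamma_{\mathbf a}$ comes from a weight computation: the $V$-factor of $\prod_i\sigma(R_i)^{a_i}$ has $-\lambda_{\mathbf a}$ as its unique highest $T$-weight, realized by $\prod_i\Delta_i^{a_i}=\Delta^{\mathbf a}$; analogously, $\prod_i(\Delta_i^{\ast})^{a_i}=(\Delta^{\ast})^{\mathbf a}$ is the unique extremal weight vector on the $V^{\ast}$-side. Thus the $\Delta^{\mathbf a}\otimes(\Delta^{\ast})^{\mathbf a}$-component of $\prod_i\sigma(R_i)^{a_i}$ is nonzero, and on the right-hand side only $\sigma(R_{\mathbf a})$ (which lies in $V_{\mathbf a}\otimes V_{\mathbf a}^{\ast}$) can contribute to that specific weight.

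Assertion (ii) then follows by induction on the dominance order on $\{\mathbf b:d(\mathbf b)=d(\mathbf a)\}$: the expansion above, together with the fact that the ``lower'' $\sigma(R_{\mathbf b})$ appearing are strictly dominance-below $\sigma(R_{\mathbf a})$, allows one to solve for $\sigma(R_{\mathbf a})$ as a polynomial in $\sigma(R_0),\dots,\sigma(R_r)$; the standard filtered-to-graded lifting then expresses $R_{\mathbf a}$ as a polynomial in $R_0,\dots,R_r$. Assertion (i) follows from the same triangularity: a polynomial relation $P(R_0,\dots,R_r)=0$ passes to a homogeneous relation on the $\sigma(R_i)$'s in $\mathrm{gr}\,D(V)^G$; picking a dominance-extremal monomial $R_0^{a_0}\cdots R_r^{a_r}$ with nonzero coefficient $c_{\mathbf a}$ in the top Bernstein piece and projecting onto $\sigma(R_{\mathbf a})$ gives $c_{\mathbf a}\gamma_{\mathbf a}\neq 0$, a contradiction.

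The main technical obstacle is confirming the triangularity in detail---specifically, showing that for $\mathbf b\neq\mathbf a$ of the same total degree, $\sigma(R_{\mathbf b})$ appears in $\prod_i\sigma(R_i)^{a_i}$ only when $\lambda_{\mathbf b}$ is strictly below $\lambda_{\mathbf a}$ in the dominance order. This requires a careful analysis of the $G$-submodule of $\C^{d(\mathbf a)}[V]$ generated by $\Delta^{\mathbf a}$, where the multiplicity-freeness hypothesis is used essentially.
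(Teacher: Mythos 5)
The paper does not actually prove Theorem~\ref{th-generateurs-op-inv}; it simply cites Howe--Umeda and the Benson--Ratcliff survey, so there is no paper proof to compare yours with. Your symbol-based argument is correct, and it follows the same general strategy as the original Howe--Umeda proof: start from the Capelli-operator basis $\{R_{\bf a}\}$ and establish a unitriangular relation between $\prod_i R_i^{a_i}$ and $R_{\bf a}$ (Howe--Umeda phrase the triangularity via eigenvalue polynomials rather than via principal symbols, but the mechanism is the same).

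A few remarks on the details. The ``main technical obstacle'' you flag is in fact routine and needs no analysis of the submodule generated by $\Delta^{\bf a}$: the $\C[V]$-factor of $\prod_i\sigma(R_i)^{a_i}$ is the image, under polynomial multiplication, of $\bigotimes_i V_{\lambda_i}^{\otimes a_i}$, and the irreducible constituents of any tensor product of highest-weight modules have highest weights dominated by the sum of the individual highest weights; the same on the $\C[V^{\ast}]$-side gives the opposite extremality, which is exactly why only $\sigma(R_{\bf a})$ survives projection onto the extremal weight $\Delta^{\bf a}\otimes(\Delta^{\ast})^{\bf a}$. Multiplicity-freeness is not needed there; it enters instead to ensure that $\{\sigma(R_{\bf b})\}$ is a $\C$-basis of $\C[V\oplus V^{\ast}]^G$, so that the coefficients $\gamma_{\bf b}$ are well-defined scalars. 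One sign caveat: with the paper's conventions (the infinitesimal weight of $\Delta^{\bf a}$ is $-\lambda_{\bf a}$), the constituents $V_{\bf b}$ appearing satisfy $\lambda_{\bf b}\succeq\lambda_{\bf a}$ rather than $\prec$ as you wrote; the argument is unchanged, only the direction of the induction reverses. Finally, you should say a word justifying $\mathrm{gr}\,D(V)^G=\C[V\oplus V^{\ast}]^G$: the Bernstein filtration is $G$-stable, and for reductive $G$ the invariant functor is exact, hence commutes with passage to the associated graded.
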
 

\begin{proof} See \cite{Howe-Umeda} (Theorem 9.1) or \cite{Benson-Ratcliff-survey} (Corollary 7.4.4).

\end{proof}

\begin{rem}\label{rem-Capelli}\hfill

a) Recall that for $i=0,1,\dots,k$ the polynomials $\Delta_{0},\Delta_{1},\dots,\Delta_{k}$ are the fundamental relative invariants under the action of the full group $G$. Once these polynomials are fixed, let us define the polynomial $\Delta_{i}^*\in \C[V^*]$ as the unique fundamental relative invariant of $(G,V^*)$   with character $\lambda_{i}^{-1}$, such that $\Delta_{i}^*(\partial)\Delta_{i}(0)=1$, for $i=0,\dots,k$. Then the Capelli operators $R_{i}$ $(i=0,\dots,k)$ are given by $R_{i}=\Delta_{i}(x)\Delta_{i}^*(\partial) $, and the Capelli operator corresponding to the irreducible component $V_{ a_{0}\lambda_{0}+\dots+a_{k}\lambda_{k}}$ is scalar multiple of $\Delta_{0}^{a_{0}}(x)\dots\Delta_{k}^{a_{k}}(x){\Delta_{0}^*} (\partial)^{a_{0}}\dots{\Delta_{k}^*(\partial)}^{a_{k}}$. More generally the Capelli operator $R_{\bf a}$ corresponding to $V_{\bf a}$ where ${\bf a}=a_{0}\lambda_{0}+\dots+a_{k}\lambda_{k}+a_{k+1}\lambda_{k+1}+\dots+a_{r}\lambda_{r}$ is a scalar multiple of  
$ \Delta_{0}^{a_{0}}(x)\dots\Delta_{k}^{a_{k}}(x) R_{a_{k+1}\lambda_{k+1}+\dots+a_{r}\lambda_{r}}{\Delta_{0}^*} (\partial)^{a_{0}}\dots{\Delta_{k}^*(\partial)}^{a_{k}}$. 

b) Moreover, in the case where $(G,V)$ is irreducible, as  $\Delta_{r} $ is  the highest weight vector in $V^*$,  the operator $R_{r}$ is nothing but the Euler operator $E$.

c) More generally, if $V=V_{1}\oplus \dots \oplus V_{\ell}$ where the representations $(G,V_{i})$ are irreducible,  the various Euler operators $E_{i}$ on  $V_{i}$ are  the Capelli operators associated to the irreducible subspaces $V_{i}^*\in \C[V]$. Of course the global Euler operator $E$ on $V$ is given by $E=E_{1}+\dots+E_{\ell}$. As the highest weight vectors of the spaces $(G,V_{i}^*)$ occur as the $\ell$ last elements of  $\Delta_{0},\dots,\Delta_{r}$, we have  $R_{r-\ell+1}=E_{1},\dots, R_{r}=E_{\ell}.$

d) According to b) and c) before, one can always take $\{R_{0},R_{1},\dots, R_{r-1}, E\}$ as a set of algebraically independent generators of $D(V)^G$.

\end{rem}
\vskip 15pt
 %%%%%%%%%%%%%%%%%%%%%%%%%%%%%%%%%%%%%%
   \subsection{Multiplicity free spaces with one dimensional quotient}\hfill 
   \vskip 5pt
   
   Let us now define the main objects this paper deals with, namely the $MF$-spaces with a one dimensional quotient, which were introduced by T. Levasseur.
  \begin{definition}
   {\rm (T. Levasseur, \cite{Levasseur} sections 3.2 and  4.2)}\hfill
   
   $1)$ A prehomogeneous vector space $(G,V)$  is said to be of rank one\renewcommand{\thefootnote}{\fnsymbol{footnote}}\footnote{It must be remarked that if $(G,V)$ is also multiplicity free, then the rank as a $PV$ is not at all the same as the rank as a $MF$ space.} if there exists an homogeneous polynomial $\Delta_{0}$ on $V$ such that $\Delta_{0}\notin \C[V]^G$ and such that $\C[V]^{G'}=\C[\Delta_{0}]$.

  $2)$ A multiplicity free space $(G,V)$   is said to have a one-dimensional quotient if it is a $PV$ of rank one.
 \end{definition}

 \begin{rem}\label{rem-pv-rang-1}\hfill
 
  a) The classification of multiplicity free spaces with a one dimensional quotient has been obtained by the author (\cite{Rubenthaler-JLT}).

b)  It can be shown that if $(G,V)$ is a $PV$ of rank one, then
  the polynomial $\Delta_{0}$ is the unique fundamental relative invariant of $(G,V)$. More precisely a $PV$ $(G,V)$ is of rank one if and only if it has a unique fundamental relative invariant (see \cite{Rubenthaler-JLT}). Hence in the notations of section $2.2$ we have $k=0$, in other words $\Delta_{0} $ is the unique fundamental $G$ relative invariant among the $B$ relative invariants $\Delta_{0}, \Delta_{1}, \dots, \Delta_{r}$.
  
   \end{rem}
  
  \medskip 
  We give now some examples of $MF$-spaces with a one dimensional quotient.
  \begin{example}\label{ex-commutative-PV}:  $PV$'s of commutative parabolic type (for details we refer to \cite{Muller-Rubenthaler-Schiffmann}, \cite{Rubenthaler-Schiffmann-1} is also relevant).
  
  Let $\widetilde{\go g}$ be a simple complex Lie algebra. Assume we are given a $3$-grading of $\widetilde{\go g}$:
  $$\widetilde{\go g}=V^-\oplus \go{g} \oplus V^+.$$
Then $\go{g}$ is a reductive Lie subalgebra and it is well known that the representation $(\go{g}, V^+)$ is prehomogeneous (here $\go{g}$ acts on $V^+$ via the bracket).  Let $\widetilde G$ be the adjoint group of $\widetilde{\go g}$ and let $G$ be the connected subgroup of $\widetilde G$ whose Lie algebra is $\go g$. Then the space $(G,V^+)$ is multiplicity free.   Moreover such a space has  a one dimensional quotient if an only if it is a regular $PV$. Up to local isomorphism one obtains the following list:

1) $(SL(n,\C)\times SL(n,\C)\times \C^*, M_{n}(\C))$ acting by $(g_{1},g_{2},t).x=t g_{1}x
g_{2}^{-1} $ $g_{1},g_{2}\in SL(n,\C), t\in \C^*, x\in M_{n}(\C))$; here $\Delta_{0}(x)=\det(x)$.

2) $(O(n,\C)\times \C^*, \C^n)$ with the natural action. Here $\Delta_{0}(x)=Q(x)=\sum_{i=1}^{i=n}x_{i}^2$.

3) $(GL(n,\C), \text{Sym}_{n}(\C))$, where $\text{Sym}_{n}(\C)$ denotes the $n\times n$ symmetric matrices, with the action $g.x=gx{^t\kern -1pt g}$. Then $\Delta_{0}(x)=\det (x)$.

4) $(GL(n,\C), \text{Skew}_{n}(\C))$, $n$ even,  where $\text{Skew}_{n}(\C)$ denotes the $n\times n$  skew-symmetric matrices, with the action $g.x=gx{^t\kern -1pt g}$. Then $\Delta_{0}(x)= {Pf(x)}$, where $Pf(x)$ denotes the pfaffian of the even skew-symmetric matrix $x$.

5) $(E_{6}\times \C^*, \C^{27})$ (the irreducible $27$-dimensional representation of $E_{6}$). The fundamental relative invariant is of degree 3, it is known as the Freudenthal cubic.

\end{example}

\begin{example} $(GL(2)\times Sp(n), \C^2\otimes \C^{2n})$ (tensor product of the natural representations). Here the action is given by 
$$(g_{1}, g_{2}).X= g_{2}X({^t}\kern -3pt g_{1}), \, g_{1}\in SL(2), g_{2} \in Sp(n), X\in M_{2n,2}$$
The relative invariant $\Delta_{0}$ is given by $Pf(^tX JX)$ where $J=\begin{pmatrix}0&Id_{n}\\
-Id_{n}&0
\end{pmatrix}$, and where $Pf(.)$ is the pfaffian of a $2\times 2$  skew symmetric matrix. The rank is equal to $3$ and it is a regular PV.
For details see \cite {Howe-Umeda} (case 11.6)  and \cite{Rubenthaler-JLT} (case 4.1.7).\end{example}  

\begin{example} $(GL(n)\times GL(n-1), M_{n,1}\oplus M_{n,n-1})$. The action is given by 
$$(g_{1},g_{2})(v,x)=(g_{1}v,g_{1}xg_{2}^{-1}), g_{1}\in GL(n),g_{2}\in GL(n-1),v\in M_{n,1},x\in M_{n,n-1}.$$
The relative invariant $\Delta_{0}$ is given by $ \Delta_{0}(x)= det(v;x)$   where $(v;x)$ is the $n\times n$ matrix obtained by putting the column vector $v$ left to the $n\times n-1$ matrix $x$. The rank is equal to $2n-1$ and it is a regular PV.
For details see     \cite{Rubenthaler-JLT} (case 4.2.5) and  \cite{Benson-Ratcliff-australian} (case 4.2.4).

\end{example}

  %\begin{example} We will see in section {\bf 2.4}  below that the regular $PV$'s of commutative parabolic type (see Table 1) are $MF$ spaces, with a unique fundamental invariant. Hence by Proposition \ref{prop-pv-rang-1}, these $PV$'s are examples of $MF$ spaces with one dimensional quotients. \end{example}
 
 %Let us now  explain the classification of $MF$ spaces with a one dimensional quotient.  

%\begin{definition}\label{equiv-MF}$($see \cite{Knop-2} $)$\hfill

%$1)$ Two representations $(G_{1},\rho_{1},V_{1})$ and $(G_{2},\rho_{2},V_{2})$ are called {\it geometrically equivalent} if there is an isomorphism $\Phi:V_{1}\longmapsto V_{2}$ such that $\Phi(\rho_{1}(G_{1}))\Phi^{-1}= \rho_{2}(G_{2})$.

%$2)$ A MF space  $(G,V)$ is called {\it decomposable} if it is geometrically equivalent to a representation of the form $(G_{1}\times G_{2}, V_{1}\oplus V_{2}$, where $(G_{1},V_{1})$ and $(G_{2},V_{2})$ are non-zero $MF$ spaces. It is called {\it  indecomposable} if it is not decomposable.

%$3)$ A representation $(G,V)$ si called {\it saturated} is the dimension of the center of $\rho(G)$ is equal to the number of irreducible summands of $V$.
%\end{definition}

%\begin{rem} 

%The notion of geometric equivalence is quite  natural, once one has remarked that the notion of $MF$ space depends only on $\rho(G)$.  It is worthwhile noticing that any representation is geometrically equivalent to its dual representation. Any representation can be made saturated by adding a torus.
%\end{rem}

   \vskip 5pt
    %%%%%%%%%%%%%%%%%%%%%%%%%%%%%%%%%%%%%%%%%%%%%%%%%%%%%%%%%%%%%%%%%%%%%%%%%%%%%%%%

\section{Algebras of differential operators}

From now on we suppose that $(G,V)$ is a $MF$ space with a one dimensional quotient.
%%%%%%%%%%%%%%%%%%%%%%%%%%%%%%%%%%%%%%
\vskip 5pt
 \subsection{Gradings and Bernstein-Sato polynomials}   \hfill
  \vskip 5pt

  Recall that we denote by $\Delta_{0}, \Delta_{1},\dots,\Delta_{r}$ the fundamental relative invariants under a fixed Borel subgroup $B$ of $G$. As the space has a one dimensional quotient, $\Delta_{0}$ is the unique polynomial among them which is relatively invariant under $G$ (this means that $k=0$ in the  notations of section 2.2). We also set ${\cal O}=\{ x\in V\,|\, \Delta_{0}(x)\neq 0\}$.

 Of course the   Euler operator $E$ on $V$  is defined for
 $P\in \C[V]$ by
  $EP(x)= \frac{\partial}{\partial t}P(tx)_{t=1}=P'(x)x $
  is invariant by any element in $GL(E)$.
  %Remember from Remark \ref{rem-Capelli} that if $(G,V)$ is irreducible we have chosen $\Delta_{r}$ in such a way that $R_{r}= E$.
  
  Once and for all we also define the following two elements in $D(V)$:
  $$X=\Delta_{0}\,\, \text{ (multiplication by } \Delta_{0}), \quad Y= \Delta_{0}^*(\partial).$$
  The operator 
  $$X^{-1} \,\,  \text{ (multiplication by } \Delta_{0}^{-1})$$
  which belongs to $D({\cal O})$ will also play an important role.
  From the definition of the $G$ action on $\C[V]$ and on $D(V)$ we have 
  $$g.X=\lambda_{0}(g^{-1})X,\quad  g.X^{-1}=\lambda_{0}(g)X^{-1}, \quad g.Y=\lambda_{0}(g)Y \eqno(3-1-1)$$
  and hence $X,Y\in D(V)^{G'}$ and $X^{-1}\in D({\cal O})^{G'}.$
   
    Let us now introduce  the following notations that we will use in the rest of the paper:
    $${\cal T}= D({\cal O})^{G'},\quad {\cal T}_{0}=D(V)^G=D({\cal O})^G $$
    
     (the last equality comes from Theorem \ref{th-op-diff-poly}).   Remember that ${\cal T}_{0}$ is a polynomial algebra in $r+1$ variables (Theorem \ref{th-generateurs-op-inv}). We have the following inclusions:
  $${\cal T}_{0}=D(V)^G=D({\cal O})^G\subset D(V)^{G'} \subset {\cal T}=D({\cal O})^{G'}.$$
 An element $D$ in ${\cal T}$ is said to be of degree $m$ if $[E,D]=mD$. As differential operators in ${\cal T}$ have coefficients which are fractions whose denominators are homogeneous (powers of $\Delta_{0}$), it is clear that ${\cal T}$ is graded by its homogeneous components . But on the other hand any homogeneous element $D$ in ${\cal T}$ preserves the $G'$-isotypic components $W_{\bf \tilde a}=\oplus _{n\in \N}\Delta_{0}^nV_{\bf \tilde a}$ (see Proposition \ref{prop-G'-isotypic}). Therefore an homogeneous element $D$   maps $\Delta_{0}^nV_{\bf \tilde a}$ on $\Delta_{0}^{n+j}V_{\bf \tilde a}$  for some $j$ and hence only multiples  of $d_{0}$ (the degree of $\Delta_{0}$) occur as homogeneous degrees in ${\cal T}$. 
If we define, for $p\in \Z$, 
${\cal T}_{p}=\{D\in {\cal T}\,|\, [E,D]=pd_{0}D\}$, then
 $${\cal T}= \oplus_{p\in \Z}{\cal T}_{p} \eqno(3-1-2)$$
 (At this point it is not completely evident that the two definitions of ${\cal T}_{0}$ coincide, that is that $D(V)^G=\{D\in {\cal T}\,|\,[E,D]=0\}$. This will be a consequence of the proof of Proposition \ref{prop-poly-XY-graduation} below).

Similarly if we define 
$$D(V)^{G'}_{p}=\{D\in D(V)^{G'}\,|\, [E,D]=pd_{0}D\},$$
we have $D(V)^{G'}= \bigoplus_{p\in \Z}D(V)^{G'}_{p}$.

\begin{definition}\label{def-poly-Bernstein-Sato} For ${\bf a}=(a_{0},a_{1},\dots,a_{r})$ and $p\in \N$, we define ${\bf a}+p=(a_{0}+p,a_{1},\dots,a_{r})$. Then if $D\in {\cal T}_{p}$,  the Schur Lemma ensures that if $P\in V_{\bf a}$ we have $DP=b_{D}({\bf a})X^pP$ where $b_{D}({\bf a})\in \C$. It is easy to see that $b_{D}$ is a polynomial in the variables $(a_{0},a_{1},\dots,a_{r})$ $($see for example \cite{Knop-2} , proof of Corollary 4.4$)$. This polynomial is called the Bernstein-Sato polynomial of $D$.
\end{definition}

\begin{example}\label{example-b_Y-commutative} Relations $(3-1-1)$ imply that $X\in {\cal T}_{1}$, $X^{-1}\in {\cal T}_{-1}$ and  $Y\in {\cal T}_{-1}$. And of course $E\in {\cal T}_{0}$. Obviously, from the definition, we have $b_{X}( {\bf a})=b_{X^{-1}}({\bf a})=1$, $b_{E}({\bf a})=d_{0}a_{0}+d_{1}a_{1}+\dots+d_{r}a_{r} =$ the degree of $V_{\bf a}$ (recall that $d_{i}$ is the degree of $\Delta_{i}$). From $(3-1-1)$ we obtain that $Y\in {\cal T}_{-1}$. The computation of $b_{Y}$ is  more difficult. However it is known in the case of $PV$'s of commutative parabolic type (see example \ref{ex-commutative-PV}). In this case, for ${\bf X}=(X_{0},X_{1},\dots,X_{r})$ it is given by
$$b_{Y}({\bf X})=c \prod_{j=0}^r(X_{0}+\dots+X_{j}+j\frac{d}{2})\eqno (3-1-3)$$
where the constant $c$ can be made explicit (see \cite{Bopp-Rubenthaler-complexe}, Th\'eor\`eme 3.19) and where $\frac{d}{2}=\frac{\dim(V)-d_{0}}{(d_{0}-1)d_{0}}$. This explicit computation of the polynomial $b_{Y} $ in the particular case of $PV$'s of commutative parabolic type has been obtained by several authors, using dictinct methods (see \cite{Bopp-Rubenthaler-complexe}, \cite{Faraut-Koranyi-book},\cite{Kostant-Sahi}, \cite{Wallach}). The constant $d$ is the same as the constant $d$ which is familiar to specialists of Jordan algebras.  
\end{example}

%\begin{rem}It is known from Igusa (\cite{Igusa}) that for any homogeneous polynomial $P$ of degree $> 2$, the Lie algebra generated over $\C$ by the two differential operators $P(x)$ and $P(\partial)$ is infinite dimensional. However in  the particular case of regular $PV$'s of commutative parabolic type and for  $X=\Delta_{0}(x)$ and $Y=\Delta_{0}(\partial)$ an easy proof of this fact can be obtained by using the explicit knowledge of $b_{Y}$  given before (see \cite{Rubenthaler-note-dualite}, Th\'eor\`eme 3.1.). If $\Delta_{0}$ is a non degenerate quadratic form (this  corresponds also to a commutative parabolic $PV$), then the Lie algebra generated over $\C$ by $X$ and $Y$ is well known to be isomorphic to ${\go{sl}}_{2}$. This remark is the foundation of the so-called metaplectic (or Segal-Shale-Weil) representation (\cite{Howe-Transcending},\cite{shale},\cite{weil}).
%\end{rem}

The following Lemma is  obvious, but useful.

\begin{lemma}\label{lemma-Bernstein-Sato}\hfill

Let $D_{1},D_{2}\in {\cal T}_{p}$. Then $D_{1}=D_{2}$ if and only if $b_{D_{1}}=b_{D_{2}}$.
\end{lemma}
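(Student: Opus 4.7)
The plan is to handle the two implications separately. The forward direction is immediate from the definition of the Bernstein--Sato polynomial: if $D_1=D_2$ as operators in $\mathcal{T}_p$, then for every $\mathbf{a}\in\mathbb{N}^{r+1}$ and every $P\in V_{\mathbf{a}}$ we have
$$b_{D_1}(\mathbf{a})\,\Delta_0^p P = D_1 P = D_2 P = b_{D_2}(\mathbf{a})\,\Delta_0^p P,$$
so $b_{D_1}(\mathbf{a})=b_{D_2}(\mathbf{a})$ for all $\mathbf{a}\in\mathbb{N}^{r+1}$, which forces the equality of these two polynomials.

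For the non-trivial direction, set $D=D_1-D_2\in\mathcal{T}_p$ and assume $b_D\equiv 0$. Then by definition $DP=0$ for every $P\in V_{\mathbf{a}}$ and every $\mathbf{a}\in\mathbb{N}^{r+1}$. Invoking the multiplicity-free decomposition $(2\text{-}2\text{-}3)$, namely $\mathbb{C}[V]=\bigoplus_{\mathbf{a}\in\mathbb{N}^{r+1}}V_{\mathbf{a}}$, we conclude that $D$ annihilates every polynomial function on $V$.

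It remains to upgrade this to $D=0$ inside $\mathcal{T}\subset D(\Omega)$, which is the only point that requires care: a priori $D$ could still act nontrivially on rational functions such as $\Delta_0^{-m}\in\mathbb{C}[\Omega]$. Write $D=\sum_\alpha f_\alpha\partial^\alpha$ as a finite sum with coefficients $f_\alpha\in\mathbb{C}[\Omega]=\mathbb{C}[V][\Delta_0^{-1}]$, and pick $N\geq 0$ large enough that $g_\alpha:=\Delta_0^N f_\alpha\in\mathbb{C}[V]$ for every $\alpha$ appearing in the sum. Then $\Delta_0^N D=\sum_\alpha g_\alpha\partial^\alpha$ lies in $D(V)$, and it still annihilates $\mathbb{C}[V]$ since $D$ does and left-multiplication by $\Delta_0^N$ preserves $\mathbb{C}[V]$. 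Because $D(V)$ acts faithfully on $\mathbb{C}[V]$, this forces $\Delta_0^N D=0$ in $D(V)$, and then $D=\Delta_0^{-N}(\Delta_0^N D)=0$ in $D(\Omega)$, which gives $D_1=D_2$.

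The only step that is not entirely formal is the clearing-of-denominators argument in the last paragraph, and I expect this to be the main (very mild) obstacle: it is what allows one to pass from ``$D$ kills $\mathbb{C}[V]$'' to ``$D=0$ in $D(\Omega)$'', and it relies essentially on the fact that the denominators of elements of $\mathcal{T}$ are all powers of the single polynomial $\Delta_0$, so a uniform $N$ can be chosen.
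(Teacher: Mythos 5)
Your argument is correct. Note first that the paper gives no proof of this lemma --- it is labelled ``obvious, but useful'' --- so there is no approach to compare against.

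One small remark: the clearing-of-denominators step in your last paragraph, while correct, can be bypassed. Since $b_D$ is by definition a polynomial in $(a_0,\dots,a_r)$, vanishing on all of $\mathbb{N}^{r+1}$ forces $b_D\equiv 0$ as a polynomial, hence $DP=0$ for every $P\in V_{\bf a}$ with ${\bf a}\in\mathbb{Z}\times\mathbb{N}^{r}$. By Lemma \ref{lemma-decomp-fractions} these spaces span $\mathbb{C}[\Omega]$, so $D$ annihilates all of $\mathbb{C}[\Omega]$, and since $\Omega$ is a smooth affine variety, $D(\Omega)$ acts faithfully on $\mathbb{C}[\Omega]$, giving $D=0$ directly. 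This sidesteps the need to pass through $D(V)$ and multiply by a power of $\Delta_0$, though your route reaches the same conclusion via the faithfulness of $D(V)$ on $\mathbb{C}[V]$ and the fact that $\Delta_0$ is a non-zero-divisor.
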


  \begin{definition}\label{def-tau} The automorphism $\tau$ of ${\cal T}=D({\cal O})^{G'} $ is defined by $$\forall D \in {\cal T}, \, \tau(D)=XDX^{-1}$$
  \end{definition}
  \begin{prop}\label{prop-stabilite-tau}\hfill
  
  The algebra ${\cal T}_{0}$ is stable under $\tau$ and for any $D\in {\cal T}_{0}$ we have
  
  $$XD=\tau(D)X\eqno(3-1-4)$$
  $$DY=Y\tau(D)\eqno(3-1-5)$$
  \end{prop}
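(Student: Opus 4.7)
The plan is to observe that the statement consists of three claims and dispatch them in turn, all by essentially formal manipulations once the right pieces are in place.

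First, relation $(3-1-4)$ is immediate from the very definition $\tau(D)=XDX^{-1}$: right-multiplying by $X$ yields $\tau(D)X=XD$, so nothing is to be proved here beyond recognizing that $X\cdot X^{-1}=1$ makes sense inside $D(\Omega)^{G'}$.

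Next, for the stability of ${\cal T}_{0}$ under $\tau$, I would use that the multiplication map in $D(\Omega)$ is $G$-equivariant, i.e., $g.(AB)=(g.A)(g.B)$. For $D\in {\cal T}_{0}=D(V)^{G}$ one has $g.D=D$, while the transformation formulas $(3-1-1)$ give $g.X=\lambda_{0}(g^{-1})X$ and $g.X^{-1}=\lambda_{0}(g)X^{-1}$. Combining these, for every $g\in G$,
$$g.(XDX^{-1}) \;=\; (g.X)(g.D)(g.X^{-1}) \;=\; \lambda_{0}(g^{-1})\lambda_{0}(g)\, XDX^{-1} \;=\; XDX^{-1}.$$
Hence $\tau(D)\in D(\Omega)^{G}$, and by Theorem \ref{th-op-diff-poly} we have $D(\Omega)^{G}=D(V)^{G}={\cal T}_{0}$, giving the stability.

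For the less obvious relation $(3-1-5)$, the key remark is that the product $YX$ belongs to ${\cal T}_{0}$: since $Y\in D(V)^{G'}$ and $X\in D(V)^{G'}$, the product lies in $D(V)^{G'}$, and one checks $G$-invariance using $(3-1-1)$ via
$$g.(YX)=(g.Y)(g.X)=\lambda_{0}(g)\lambda_{0}(g^{-1})\,YX=YX.$$
Since $(G,V)$ is multiplicity free, Theorem \ref{th-caracterisation-MF}$(2)$ says that ${\cal T}_{0}=D(V)^{G}$ is commutative. Therefore $YX$ commutes with any $D\in {\cal T}_{0}$, that is $YXD=DYX$. Right-multiplying by $X^{-1}$ gives $Y(XDX^{-1})=DY$, i.e. $Y\tau(D)=DY$, which is $(3-1-5)$.

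There is essentially no genuine obstacle: the only nontrivial ingredients are the commutativity of $D(V)^{G}$ for $MF$ spaces (Theorem \ref{th-caracterisation-MF}) and the identification $D(V)^{G}=D(\Omega)^{G}$ (Theorem \ref{th-op-diff-poly}), both of which are already available. The proof is a short bookkeeping argument once the character formulas $(3-1-1)$ and these two structural results are invoked.
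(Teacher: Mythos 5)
Your argument is correct, and for (3-1-5) it follows a genuinely different route from the paper's. The paper establishes (3-1-5) by checking it on each isotypic component $V_{\bf a}$: writing out the action of both sides in terms of Bernstein--Sato polynomials (both act as $b_{D}({\bf a}-1)b_{Y}({\bf a})X^{-1}$ on $V_{\bf a}$) and then invoking Lemma \ref{lemma-Bernstein-Sato} to conclude equality. You instead observe that $YX$ is a $G$-invariant element of $D(V)$, hence lies in ${\cal T}_{0}$, and then invoke the commutativity of ${\cal T}_{0}=D(V)^{G}$ (Theorem \ref{th-caracterisation-MF}(2), Howe--Umeda) to get $YXD=DYX$, after which right-multiplication by $X^{-1}$ in $D(\Omega)$ yields $Y\tau(D)=DY$. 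Both are valid; your argument packages the multiplicity-free input into the single structural fact that $D(V)^{G}$ is commutative and then proceeds by a two-line algebraic manipulation, while the paper's argument is more concrete and has the side-benefit of actually computing the Bernstein--Sato polynomial of $Y\tau(D)$, a computation the paper reuses later (e.g.\ Proposition \ref{prop-calcul-Bernstein-Sato}). The treatment of $\tau$-stability and of (3-1-4) is essentially identical in both.
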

  \begin{proof} By definition ${\cal T}_{0}=D(V)^G$. From relations $(3-1-1)$ we see that if $D$ is $G$-invariant so is $\tau(D)$. Obviously $\tau(D)\in D( {\cal O})^G$. But $D({\cal O})^G=D(V)^G$ by Theorem \ref{th-op-diff-poly}, hence  ${\cal T}_{0}$ is $\tau$-stable. Relation $(3-1-4)$ is just the definition of $\tau$. We will now prove that $(3-1-5)$ holds on each subspace $V_{\bf a}$. Let $b_{D}$ be the Bernstein-Sato polynomial of $D$. Then an easy calculation shows that the left and right side of $(3-1-5)$ act on $V_{\bf a}$ by $b_{D}({\bf a}-1)b_{Y}({\bf a})X^{-1}$. Then Lemma	\ref{lemma-Bernstein-Sato} implies $(3-1-5)$.
  
  \end{proof} 
  
  Let us denote by ${\cal T}_{0}[X,Y]$ the subalgebra of ${\cal T}$ generated by ${\cal T}_{0}$, $X$ and $Y$. From the preceding Proposition and from the fact that $XY$ and $YX$ belong to ${\cal T}_{0}$ we know that any element $D\in {\cal T}_{0}[X,Y]$ can be written as a finite sum $D=\sum_{p,q\in \N}a_{p,q}X^pY^q$ with $a_{p,q}\in {\cal T}_{0}$. Similarly, let ${\cal T}_{0}[X,X^{-1}]$ denote the subalgebra of ${\cal T}$ generated by ${\cal T}_{0}$, $X$ and $X^{-1}$. Also any element $D$ in ${\cal T}_{0}[X,X^{-1}]$ can we written as a finite sum $D= \sum_{p\in \Z} a_{p}X^p$. The following Proposition shows that  $D(V)^{G'}={\cal T}_{0}[X,Y]$ and that   ${\cal T}=D({\cal O})^{G'}={\cal T}_{0}[X,X^{-1}]$ and makes the gradings   more precise.

\begin{prop}\label{prop-poly-XY-graduation}\hfill

$1)$  We have 

\centerline{$D(V)^{G'}={\cal T}_{0}[X,Y]= (\bigoplus_{p\in \N^*}{\cal T}_{0}Y^p\bigoplus)\bigoplus{\cal T}_{0}\bigoplus (\bigoplus_{p\in \N^*}{\cal T}_{0}X^p)$}

$($in particular $D(V)^{G'}_{p}={\cal T}_{0}X^p $ if $p\geq 0$, and $D(V)^{G'}_{p}={\cal T}_{0}Y^{-p} $ if $p< 0$$)$.
Equivalently we have 

\centerline{$D(V)^{G'}={\cal T}_{0}[X,Y]= (\bigoplus_{p\in \N^*}Y^p{\cal T}_{0} \bigoplus)\bigoplus{\cal T}_{0}\bigoplus (\bigoplus_{p\in \N}X^p{\cal T}_{0})$.}

$2)$
 We have ${\cal T}=D({\cal O})^{G'}={\cal T}_{0}[X,X^{-1}]=\bigoplus_{p\in \Z}{\cal T}_{0}X^p=\bigoplus_{p\in \Z}X^p{\cal T}_{0} $.  
 
 $3)$
  Any element $D$ in ${\cal   T}_0[X,Y] $ can be written uniquely in the form
$$D=\sum_{i> 0} u_iY^i    +  \sum_{i\geq 0} v_iX^i\text {  or  } D=\sum_{i> 0}  Y^i u_{i}   +  \sum_{i\geq 0}  X^iv_i\text {  {\rm (finite  sums)}} $$
with $u_i, v_{i}\in  {\cal   T}_0$.

 Any element  $D\in  {\cal   T}$ can be written uniquely in the form 
$$ 
D=\sum_{i\in {\bb Z}}u_i X^i   \text{  or  }  
 D=\sum_{i\in {\bb Z}}  X^i u_i \quad{\rm (finite \,\,sums)}  
$$
with $u_i\in  {\cal   T}_0$. 
  
\end{prop}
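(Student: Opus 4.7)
The plan is to prove Part 2 first and then deduce Parts 1 and 3 from it. For Part 2 the algebra $\mathcal{T}=D(\Omega)^{G'}$ is naturally graded by the eigenvalues of $\mathrm{ad}\,E$, with eigenvalues in $d_{0}\Z$ by the observation preceding the proposition, so it is enough to show $\mathcal{T}_{p}=\mathcal{T}_{0}X^{p}$ for every $p\in\Z$. Given $D\in\mathcal{T}_{p}$, set $D_{0}:=DX^{-p}$; then $D_{0}\in\mathcal{T}$ is $G'$-invariant and satisfies $[E,D_{0}]=0$. The decisive observation is that each $V_{\bf a}$ is $G'$-irreducible: it is $G$-irreducible by construction and the connected center $Z(G)^{\circ}$ acts on it by a single character, so the restriction to $G'=[G,G]$ remains irreducible. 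Now $D_{0}$ preserves each $G'$-isotypic component $W_{\tilde a}$ (Proposition \ref{prop-G'-isotypic}) and the total homogeneous degree; within $W_{\tilde a}=\bigoplus_{a_{0}\in\Z}\Delta_{0}^{a_{0}}V_{\tilde a}$ the degree determines $a_{0}$, so $D_{0}$ preserves each $V_{\bf a}$. By Schur's lemma $D_{0}$ acts on $V_{\bf a}$ by a scalar, which trivially commutes with the scalar $Z(G)^{\circ}$-action, whence $D_{0}\in D(\Omega)^{G}=\mathcal{T}_{0}$. This yields $\mathcal{T}_{p}\subset\mathcal{T}_{0}X^{p}$ (the reverse inclusion being obvious) and incidentally reconciles the two a priori different definitions of $\mathcal{T}_{0}$ flagged in the text. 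The symmetric identity $\mathcal{T}_{p}=X^{p}\mathcal{T}_{0}$ follows by the same argument with $X^{-p}D$ in place of $DX^{-p}$.

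For Part 1 the non-negative case is immediate: $\mathcal{T}_{0}X^{p}\subset D(V)^{G'}_{p}$ because $\mathcal{T}_{0}\subset D(V)^{G}$ and $X\in D(V)^{G'}$, and conversely $D(V)^{G'}_{p}\subset\mathcal{T}_{p}=\mathcal{T}_{0}X^{p}$ by Part 2. The substantive case is $p<0$: while $\mathcal{T}_{0}Y^{-p}\subset D(V)^{G'}_{p}$ is clear, the reverse inclusion is the main obstacle. Since $Y$ is not invertible in $\mathcal{T}$, a direct attempt to factor out $Y^{-p}$ reduces to the divisibility of $DX^{-p}\in\mathcal{T}_{0}$ by $Y^{-p}X^{-p}\in\mathcal{T}_{0}$, and from Bernstein-Sato polynomials one sees that the obvious divisibility forced by $D$ having polynomial coefficients is strictly weaker than what is needed, so this route stalls. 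I bypass the issue with a Fourier/adjoint argument: the canonical anti-isomorphism $\mathcal{F}\colon D(V)\to D(V^{*})$ defined on generators (for dual bases compatible with the pairing $(2-2-2)$) by $\mathcal{F}(x_{i})=\partial_{y_{i}}$ and $\mathcal{F}(\partial_{i})=y_{i}$ sends $X=\Delta_{0}(x)$ to $\Delta_{0}(\partial_{y})$ (the analogue of $Y$ on $V^{*}$) and $Y=\Delta_{0}^{*}(\partial)$ to $\Delta_{0}^{*}(y)$ (the analogue of $X$ on $V^{*}$), is $G'$-equivariant, and because anti-isomorphisms negate commutators it reverses the grading degree. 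Since $(G,V^{*})$ is again a multiplicity free space with one-dimensional quotient (Theorem \ref{th-caracterisation-MF}(3)), the non-negative case, applied on $V^{*}$ to $\mathcal{F}(D)\in D(V^{*})^{G'}_{-p}$ with $-p>0$, yields $\mathcal{F}(D)=D_{0}^{V^{*}}(X^{V^{*}})^{-p}$ with $D_{0}^{V^{*}}$ in the $V^{*}$-analogue of $\mathcal{T}_{0}$; applying $\mathcal{F}^{-1}$ (also anti-multiplicative) gives $D=Y^{-p}D_{0}$ with $D_{0}\in\mathcal{T}_{0}$. Finally, iterating the relation $D_{0}Y=Y\tau(D_{0})$ from Proposition \ref{prop-stabilite-tau} and using that $\tau$ is an automorphism of $\mathcal{T}_{0}$ shows $\mathcal{T}_{0}Y^{k}=Y^{k}\mathcal{T}_{0}$ for all $k\geq 0$, so $D\in\mathcal{T}_{0}Y^{-p}$, as required.

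Part 3 then follows formally. The various summands $\mathcal{T}_{0}Y^{i}$ ($i>0$) and $\mathcal{T}_{0}X^{i}$ ($i\geq 0$) lie in distinct graded pieces of $\mathcal{T}$, so any putative relation projects to a graded one. Within $\mathcal{T}_{0}X^{p}$ with $p\geq 0$, freeness over $\mathcal{T}_{0}$ is trivial because $X$ is invertible in $\mathcal{T}$; within $\mathcal{T}_{0}Y^{q}$ with $q>0$ one writes $Y^{q}=(Y^{q}X^{q})X^{-q}$, where $Y^{q}X^{q}\in\mathcal{T}_{0}$ acts on $V_{\bf a}$ by the non-zero polynomial $\prod_{j=1}^{q}b_{Y}({\bf a}+j)$ and is therefore a non-zero-divisor in the polynomial domain $\mathcal{T}_{0}$, ruling out non-trivial relations. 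The right-module decompositions are obtained by symmetric arguments.
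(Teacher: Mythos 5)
Your proof is correct, but it takes a genuinely different route from the paper's. The paper proves Part 1 first via the Howe--Umeda technique: it decomposes $\C[V]\otimes\C[V^*]\cong D(V)$ into $G'$-isotypic pieces, shows that $\Delta_0^{a_0}V_{\tilde{\bf a}}\otimes(\Delta_0^{b_0}V_{\tilde{\bf a}})^*$ contributes a unique $G'$-invariant $R_{a_0,b_0,\tilde{\bf a}}$ which, following Remark \ref{rem-Capelli}, factors as $X^{a_0}R_{0,0,\tilde{\bf a}}Y^{b_0}$, obtains in this way an explicit vector-space basis of $D(V)^{G'}$, and then moves factors around using Proposition \ref{prop-stabilite-tau}; Parts 2 and 3 drop out. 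You instead begin with Part 2, using a clean Schur's-lemma argument (a degree-zero $G'$-invariant operator on $\Omega$ preserves each $V_{\bf a}$ and therefore acts by scalars, hence is $G$-invariant), which also reconciles the two a priori definitions of ${\cal T}_0$ flagged in the text. The non-negative half of Part 1 then is formal. Your real contribution is the negative-degree case: rather than trying to factor $Y^{-p}$ out of $D\in D(V)^{G'}_p$ directly --- and you correctly diagnose that the resulting divisibility question in ${\cal T}_0$ is not evidently settled by the polynomial-coefficient hypothesis alone --- you pass through the Fourier anti-isomorphism $D(V)\to D(V^*)$, which is $G$-equivariant, reverses the $\ad E$-grading, and swaps $X,Y$ with the analogous operators on $V^*$, and then apply the already-established non-negative case on the dual space. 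All of these properties of the Fourier transform check out, and the only small imprecision is that Theorem \ref{th-caracterisation-MF}(3) gives only the multiplicity-freeness of $(G,V^*)$; the one-dimensional-quotient property for $V^*$ needs the further observation $\C[V^*]^{G'}=\C[\Delta_0^*]$, which follows from the $V^*$-analogue of Proposition \ref{prop-G'-isotypic}. For Part 3 the paper simply invokes the absence of zero divisors in $D(\Omega)$; your explicit Bernstein--Sato computation showing $Y^qX^q$ is a non-zero-divisor in ${\cal T}_0$ spells this out but amounts to the same thing. In sum, the paper's route is more constructive --- it yields an explicit basis of $D(V)^{G'}$ and never leaves $V$ --- while your route is more structural, making the duality between the positive and negative halves of the grading visible.
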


\begin{proof}
$1)$ For the moment we define ${\cal T}_{0}$ by ${\cal T}_{0}=D(V)^G$. From Proposition \ref{prop-G'-isotypic} we know that the decomposition of $\C[V]$ into $G'$-isotypic components is given by 
$$\C[V]= \bigoplus_{{\bf \tilde a}\in \N^r}U_{\bf \tilde a} \text { where }U_{\bf \tilde a}= \bigoplus_{a_{0}\in \N}\Delta_{0}^{a_{0}}V_{\bf \tilde a} \text{ and  } {\bf \tilde a}=(0,a_{1},\dots,a_{r}).$$
We will now use the technique of Howe and Umeda (\cite{Howe-Umeda})  which we have already mentioned before Theorem \ref{th-generateurs-op-inv}. As $\C[V]\otimes \C[V^*]$ is $G'$-isomorphic to $D(V)$, each subspace $\Delta^{a_{0}}V_{\bf \tilde a}\otimes (\Delta^{b_{0}}V_{\bf \tilde a})^*$ will give rise to a unique $G'$-invariant differential operator $R_{a_{0},b_{0},{\bf \tilde a}}$. Then by the same arguments as in Remark \ref{rem-Capelli}, it is easy to see that $R_{a_{0},b_{0},{\bf \tilde a}}= \Delta_{0}(x)^{a_{0}}R_{0,0,{\bf \tilde a}}{\Delta_{0}^*(\partial)}^{b_{0}}=X^{a_{0}}R_{0,0,{\bf \tilde a}}Y^{b_{0}}$. The elements $X^{a_{0}}R_{0,0,{\bf \tilde a}}Y^{b_{0}}$ $(a_{0},b_{0}\in \N, {\bf \tilde a} \in \N^r)$ form a vector basis of $D(V)^{G'}$. Remark now that $R_{0,0,{\bf \tilde a}}$ is in $D(V)^G={\cal T}_{0}$. Then from Proposition \ref{prop-stabilite-tau}, we get $X^{a_{0}}R_{0,0,{\bf \tilde a}}Y^{b_{0}}= \tau^{a_{0}}(R_{0,0,{\bf \tilde a}})X^{a_{0}}Y^{b_{0}}$ and $\tau^{a_{0}}(R_{0,0,{\bf \tilde a}})\in {\cal T}_{0}$. If now $a_{0}\leq b_{0}$, then $X^{a_{0}}R_{0,0,{\bf \tilde a}}Y^{b_{0}}= RY^{b_{0}-a_{0}}$, where  $R=\tau^{a_{0}}(R_{0,0,{\bf \tilde a}})X^{a_{0}}Y^{a_{0}}\in {\cal T}_{0}$. If $a_{0}> b_{0}$, then $X^{a_{0}}R_{0,0,{\bf \tilde a}}Y^{b_{0}}=RX^{a_{0}-b_{0}}$ where $R=\tau^{a_{0}}(R_{0,0,{\bf \tilde a}})\tau^{a_{0}-b_{0}}(X^{b_{0}}Y^{b_{0}})\in {\cal T}_{0}$. The first decomposition in assertion $1)$ is proved. The second decomposition is a consequence of relations $(3-1-4)$ and $(3-1-5)$.

$2)$ A slight extension of $(2-2-2)$ shows that $\C[{\cal O}]\otimes \C[V^*]$ is $G$-isomorphic to $D({\cal O})$ through the map $\varphi \otimes f \longmapsto \varphi f(\partial)$. Then the same proof as in $1)$ above shows that the elements $X^{a_{0}}R_{0,0,{\bf \tilde a}}Y^{b_{0}}$ ($a_{0}\in \Z, b_{0}\in \N, \tilde {\bf a}\in \N^r$) form a vector basis of  $D({\cal O})^{G'}={\cal T}$. Consider now an element $D\in {\cal T}$ such that $[E,D]=0$. Then necessarily $D$ is a linear combination of elements of the  form $X^{a_{0}}R_{0,0,{\bf \tilde a}}Y^{a_{0}}$ with $a_{0}\in \N$. Then, as announced previously, the two definitions of ${\cal T}_{0}$ coincide (${\cal T}_{0}=D(V)^G$ and ${\cal T}_{0}=\{D\in {\cal T},\, [E,D]=0\}$). Now if $D\in {\cal T}_{p}$, then $D=DX^{-p}X^p$ and $DX^{-p}\in {\cal T}_{0}$. Hence ${\cal T}_{p}={\cal T}_{0}X^p=X^p {\cal T}_{0}$.

Assertion $3)$ is then  obvious.

\end{proof}
\begin{rem}The inclusion $D(V)^{G'}\subset D( {\cal O})^{G'}$ is obviously strict ($X^{-1}\in D({\cal O})^{G'}\setminus D(V)^{G'}$), but the preceding results shows that these two graded algebras have the same "positive part" ($\oplus_{p\in \N}{\cal T}_{0}X^p$). 
\end{rem}
The following proposition whose proof is straightforward shows that all the Bernstein-Sato polynomials are known if one knows the Bernstein-Sato polynomials of $Y$ and of the elements of ${\cal T}_{0}$.
\begin{prop}\label{prop-calcul-Bernstein-Sato}\hfill

Let $D=D_{0}X^n$ $(n\in {\bb Z})$, resp. $D=D_{0}Y^n$ $(n\in {\bb N}^*)$, $D_{0}\in {\cal T}_{0}$, be generic homogeneous elements in ${\cal T}={\cal T}_{0}[X,X^{-1}]$ or ${\cal T}_{0}[X,Y]$.
Then 

$b_{D}({\bf a})=b_{D_{0}}({\bf a}+n)$, resp. $b_{D}({\bf a})=b_{D_{0}}({\bf a}-n)b_{Y}({\bf a})b_{Y}({\bf a}-1)\dots b_{Y}({\bf a}-n+1).$
\end{prop}

%\begin{proof} Consider first the case where $D=D_{0}X^n$ and let $Q\in V_{\bf a}$. We have:
%$$DQ=b_{D}({\bf a})\Delta_{0}^nQ=D_{0}\Delta_{0}^nQ=b_{D_{0}}({\bf a}+n)\Delta_{0}^nQ.$$
%Hence $b_{D}({\bf a})=b_{D_{0}}({\bf a}+n).$

%Consider now the case  $D=D_{0}Y^n$ and let $Q\in V_{\bf a}$. We have:
%$$\begin{array}{l}DQ=b_{D}({\bf a})\Delta_{0}^{-n}Q=D _{0}Y^n Q=D_{0}Y^{n-1}Y Q= D_{0}Y ^{n-1}b_{Y}({\bf a})\Delta_{0}^{-1}Q\\
%=D_{0}b_{Y}({\bf a})b_{Y}({\bf a}-1)\dots b_{Y}({\bf a}-n+1)\Delta_{0}^{-n}Q\\
%=b_{Y}({\bf a}-1)\dots b_{Y}({\bf a}-n+1)b_{D_{0}}({\bf a}-n)\Delta_{0}^{-n}Q.
%\end{array}$$
%Hence $b_{D}({\bf a})=b_{Y}({\bf a}-1)\dots b_{Y}({\bf a}-n+1)b_{D_{0}}({\bf a}-n).$

%\end{proof}

%%%%%%%%%%%%%%%%%%%%%%%%%%%%%%%%%%%%%%%

\vskip 15pt
\label{Harish-Chandra}  \subsection {The Harish-Chandra isomorphism and the center of ${\cal T}$}\hfill
\vskip 5pt

The aim of this subsection is to describe ${\cal T}_{0}=D(V)^G$ as a module over the center of ${\cal T}$. For this we will use the Harish-Chandra isomorphism for $MF$ spaces due to F. Knop.

 \vskip 5pt
Let $(G,V)$ be a $MF$ space with a one dimensional quotient. Let $B$ be a fixed Borel subgroup of $G$. Remember that $(B,V)$ is a $PV$. Recall also that we denote by $\Delta_{0},\Delta_{1},\dots,\Delta_{r}$ the set of fundamental relative invariants of $(B,V)$ and that  $\Delta_{0}$ is the unique fundamental relative invariant under $G$. We denote by $d_{i}$ (resp. $\lambda _{i}$) the degree (resp. the infinitesimal character) of $\Delta_{i}$. Let ${\go b}$ be the Lie algebra of $B$, let ${\go t}\subset {\go b}$ be a Cartan subalgebra of ${\go g}$ and let $\Sigma $ be the  set of roots of the pair $({\go g}, {\go t})$. Denote by $W$ the Weyl group of $\Sigma $. Denote by $\Sigma^+ $  the set of positive roots such that ${\go b}={\go t}+\sum _{\alpha \in \Sigma^+ }{\go g}^\alpha$.  Let $\rho=\frac{1}{2}\sum_{\alpha \in \Sigma^+ } \alpha$.
We define 
$${\go a}^*=\oplus_{i=0}^r\C \lambda _{i}\subset {\go t}^* \text{ and } A={\go a}^*+\rho \subset  {\go t}^*.$$

Let ${\cal Z}({\go g})$ be the center of the enveloping algebra of ${\go g}$. Denote by $\C[{\go t}^*]^W$ the $W$-invariant polynomials on ${\go t}^*$. One knows that the classical Harish-Chandra isomorphism is an isomorphism $H: {\cal Z}({\go g})\longrightarrow \C[{\go t}^*]^W$ which can be computed the following way. For any $\lambda \in {\go t}^*$, let $V_{\lambda}$ be the irreducible highest weight module with highest weight $\lambda$. It is well  known that ${\cal Z}({\go g})$ acts by scalar multiplication on $V_{\lambda}$. The scalar by which an element $z\in {\cal Z}({\go g})$ acts on $V_{\lambda}$ is precisely $H(z)(\lambda + \rho)$.

The natural representation of $G$ on $\C[V]$ extends to a representation of the enveloping algebra ${\cal U}({\go g})$ on the same space $\C[V]$.   Hence  $z\in {\cal Z}({\go g})$ acts on $V_{\bf a}$ by the scalar $H(z)(-\lambda_{\bf a}+\rho)\renewcommand{\thefootnote}{\fnsymbol{footnote}}\footnote{The change of sign   is due to the fact that we consider here characters of relative invariants instead of highest weights.}$ where $\lambda_{\bf a}= \sum_{i=0}^r a_{i}\lambda_{i}$ (remember that ${\bf a}=(a_{0},\dots,a_{r})$). Conversely if $\lambda=a_{0}\lambda_{0}+\dots+a_{r}\lambda_{r}$ we define ${\bf a}_{\lambda}=(a_{0},\dots,a_{r})\in \C^{r+1}$ . By abuse of notation if $b_{D}$ is the Bernstein-Sato polynomial of $D\in {\cal T}_{0}$, we set $b_{D}(\lambda)=b_{D}({\bf a}_{\lambda})$. Hence $H(z)(-\lambda+\rho)=b_{D}(\lambda).$

On the other hand any $D\in D(V)^G={\cal T}_{0}$ acts on each $V_{\bf a}$ by the scalar $b_{D}({\bf a})$, where $b_{D}({\bf a})$ is the Bernstein-Sato polynomial of $D$. This allows us to define the map:

$$\begin{array}{cccl}h: &D(V)^G&\longrightarrow &\C[A]\\
{ }&D&\longmapsto &h(D):-\lambda+\rho\longmapsto h(D)(-\lambda+ \rho)=b_{D}({\lambda}) 
\end{array}
$$
where $\C[A]$ denotes the algebra of polynomials on the affine space $A= {\go a}^*+\rho\subset {\go t}^*$.     

Let $\pi(z)$ be the operator in $D(V)^G$ which represents the action of $z$ on $\C[V]$ and let $r:\C[{\go t}^*]^W \longrightarrow \C[A]$ be the restriction homomorphism. It is clear from the definitions that the following diagram commutes:

$$\xymatrix{
{\cal Z}({\go g})\ar[d]_{\pi}\ar[rr]^{H}&&\C[{\go t}^*]^W\ar[d]^{r}\\
D(V)^G \ar[rr]^{h} & &\C[A] \\
}$$

\begin{theorem}\label{th-Knop}\hfill

{\rm (Knop, see \cite{Knop-2}  Th. 4.8 and Corollary 4.9
or \cite {Benson-Ratcliff-survey}, Th. 9.2.1)}\hfill

The homomorphism $h$ is injective and there exists a finite group $W_{0}$ (sometimes called the little Weyl group) which is a subgroup of the stabilizer of $A$ in $W$, such that the image of $h$  is $\C[A]^{W_{0}}$. Hence $h$ is an isomorphism between $D(V)^G$ and $\C[A]^{W_{0}}$. The isomorphism $h$ is called the Harish-Chandra isomorphism for the $MF$ space $(G,V)$. Moreover $W_{0}$ acts as a reflection group on ${\go a}^*$.

\end{theorem}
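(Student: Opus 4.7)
The plan splits into injectivity and identification of the image. For injectivity, I would observe that the integer points ${\bf a}\in \N^{r+1}$ are Zariski dense in the affine space $A={\go a}^*+\rho$, so a polynomial on $A$ is determined by its values at the shifted lattice points $\lambda_{\bf a}+\rho$. If $h(D)=0$, then $b_D({\bf a})=0$ for every ${\bf a}\in \N^{r+1}$, so $D$ acts as zero on each irreducible component $V_{\bf a}$ in the multiplicity-free decomposition $\C[V]=\bigoplus V_{\bf a}$; since a differential operator with polynomial coefficients is determined by its action on $\C[V]$, this forces $D=0$.

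For the image, Theorem \ref{th-generateurs-op-inv} gives $D(V)^G=\C[R_0,\dots,R_r]$, a polynomial ring in $r+1$ variables, so by injectivity $h(D(V)^G)$ is a polynomial subalgebra of $\C[A]$ of Krull dimension $r+1=\dim A$. Feeding the commutative diagram $h\circ\pi=r\circ H$ the classical Harish-Chandra isomorphism $H({\cal Z}({\go g}))=\C[{\go t}^*]^W$ yields $r(\C[{\go t}^*]^W)\subseteq h(D(V)^G)$. Setting $W_A=\{w\in W:w(A)=A\}$, the inclusion $r(\C[{\go t}^*]^W)\subseteq \C[A]^{W_A}$ is automatic because elements of $W_A$ preserve restriction to $A$; moreover the finiteness of $\C[{\go t}^*]$ over $\C[{\go t}^*]^W$ descends, after restriction, to finiteness of $\C[A]$ over $r(\C[{\go t}^*]^W)$, hence over the intermediate ring $h(D(V)^G)$.

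Next I would exploit normality: since $\C[A]$ is a normal domain that is finite over $h(D(V)^G)$, it equals the integral closure of $h(D(V)^G)$ in the extension of fraction fields, which is therefore finite and Galois with some finite group $W_0$, so $h(D(V)^G)=\C[A]^{W_0}$. The converse direction of Chevalley--Shephard--Todd then forces $W_0$ to act on ${\go a}^*$ through pseudo-reflections (since $\C[A]^{W_0}$ is polynomial), and this action must in fact be by honest reflections because the ambient action of $W_A$ on ${\go t}^*$ is orthogonal with respect to a Killing-form pairing.

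The main obstacle is the last point: realizing the abstract Galois group $W_0$ as a concrete subgroup of $W_A$ acting linearly on ${\go a}^*$, rather than only as an abstract group of field automorphisms. This requires identifying, for each pseudo-reflection of $W_0$, a genuine element of $W$ stabilizing $A$ that induces it. In Knop's approach this identification is carried out by studying the $G$-action on the cotangent bundle $T^*V$ and the associated moment map, which provides a geometric source of reflections indexed by the spherical roots of the $G$-variety $V$; tracking how these spherical roots lift to the Weyl group $W$ of ${\go g}$ gives the required embedding $W_0\hookrightarrow W_A$.
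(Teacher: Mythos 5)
The paper does not prove this theorem; it is stated as a citation to Knop (\cite{Knop-2}, Th.~4.8 and Cor.~4.9) and to \cite{Benson-Ratcliff-survey}, so there is no in-paper proof to compare against. Reviewing your sketch on its own merits: the injectivity argument (Zariski density of the lattice points $\lambda_{\bf a}+\rho$, combined with the fact that an operator in $D(V)$ vanishing on all of $\C[V]$ is zero) is correct, and so is the chain of inclusions $r(\C[{\go t}^*]^W)\subseteq h(D(V)^G)\subseteq\C[A]$ together with the finiteness of $\C[A]$ over $r(\C[{\go t}^*]^W)$, which does descend from the finiteness of $\C[{\go t}^*]$ over $\C[{\go t}^*]^W$.

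The genuine gap is the assertion that the extension of fraction fields $\mathrm{Frac}(\C[A])\big/\mathrm{Frac}\bigl(h(D(V)^G)\bigr)$ ``is therefore finite and Galois with some finite group $W_0$.'' Finiteness, normality of the ring $\C[A]$, and the fact that both rings are polynomial of the same Krull dimension do \emph{not} imply that the field extension is Galois (i.e.\ normal): a finite separable extension of rational function fields is typically not normal (e.g.\ $\C(y)\supset\C(y^3+y)$). If it is not normal, then no finite group $W_0$ can satisfy $h(D(V)^G)=\C[A]^{W_0}$, since the fixed field of $\mathrm{Aut}$ would be strictly larger than the base. Proving normality here amounts to exhibiting sufficiently many automorphisms of $\C[A]$ fixing the image of $h$, which is essentially the content of the theorem; so this step is circular as stated. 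A second, smaller issue is that even granting a Galois group $W_0$ acting on $\C[A]$, the Chevalley--Shephard--Todd converse requires a \emph{linear} action on the affine space $A$, and a finite group of ring automorphisms of a polynomial algebra need not act linearly without a further argument (e.g.\ a common fixed point around which to linearize). Your closing paragraph correctly identifies where the real work lies -- Knop's construction of $W_0$ from the moment map on $T^*V$ and the invariant collective functions -- and it is precisely this geometric input that supplies both the missing normality and the concrete linear realization of $W_0$ inside the stabilizer of $A$ in $W$; without it the commutative-algebra skeleton you set up does not close.
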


Let us see what is the automorphism of $\C[A]^{W_{0}}$ which corresponds to the action of $\tau$ on $D(V)^G$ through the Harish-Chandra isomorphism $h$. Let $D\in D(V)^G$. Then $h(\tau(D))(-\lambda+\rho)=h(XDX^{-1})(-\lambda+\rho)=b_{XDX^{-1}}(\lambda)=b_{D}(\lambda-\lambda_{0})$. This calculation proves of course that $\C[A]^{W_{0}}$ is stable under $P(\lambda+\rho)\longmapsto P((\lambda-\lambda_{0})+\rho)$. Therefore we make the following definition.
\begin{definition}\label{tau-poly} 

 By abuse of notation  $\tau$ will also denote the automorphism of $\C[A]^{W_{0}}$ which is defined by $\tau(P)(\lambda+\rho)=P((\lambda-\lambda_{0})+\rho)$ $(P\in \C[A]^{W_{0}})$. Let $\C[A]^{W_{0},\tau}$ denote the set of elements in $\C[A]^{W_{0}}$ which are invariant under $\tau$.

\end{definition}

\begin{prop}\label{prop-centre-T}\hfill

Let ${\cal Z}({\cal T})$ be the center of ${\cal T}=D({\cal O})^{G'}$. Then ${\cal Z}({\cal T})$ is also the center of ${\cal T}_{0}[X,Y]=D(V)^{G'}$. Moreover the following assertions are equivalent:

$i)$ $D\in{\cal Z}({\cal T})$

$ii)$ $ D\in {\cal T}_{0}$ and $\tau(D)=D$ $($i.e. $D$ commutes with $X$ $)$.

$iii)$ $D\in {\cal T}_{0}$ and the Bernstein-Sato polynomial $b_{D}(a_{0},a_{1}, 
\dots, a_{r})$ does not depend on $a_{0}$.

$iv)$ $ D\in {\cal T}_{0}$ and   $D$ commutes with $Y$.

$v)$ $D\in {\cal T}_{0}$ and $h(D)\in \C[A]^{W_{0},\tau}$.

\end{prop}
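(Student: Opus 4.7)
The plan is to establish the chain $i) \Leftrightarrow ii) \Leftrightarrow iii) \Leftrightarrow iv)$, from which the equality $\mathcal{Z}(\mathcal{T}) = \mathcal{Z}(\mathcal{T}_0[X,Y])$ will fall out as a corollary. The strategy rests essentially on the structure theorem for $\mathcal{T}$ (Proposition \ref{prop-poly-XY-graduation}), the commutation rules $(3\text{-}1\text{-}4)$ and $(3\text{-}1\text{-}5)$, Lemma \ref{lemma-Bernstein-Sato}, the commutativity of $\mathcal{T}_0=D(V)^G$ (Theorem \ref{th-caracterisation-MF}(2)), and the Harish-Chandra isomorphism $h$ (Theorem \ref{th-Knop}).

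For $ii) \Leftrightarrow iii)$, the key computation is to evaluate $\tau(D)=XDX^{-1}$ on a typical $G$-isotypic piece $V_{\bf a}$ for $D\in\mathcal{T}_0$. Given $P\in V_{\bf a}$, the element $X^{-1}P=\Delta_0^{-1}P$ lies in $\Delta_0^{-1}V_{\bf a}=V_{{\bf a}-1}$, which is an irreducible $G$-submodule of $\mathbb{C}[\Omega]$ by Lemma \ref{lemma-decomp-fractions}. Since $D\in\mathcal{T}_0=D(\Omega)^G$ acts on $V_{{\bf a}-1}$ by a scalar equal to $b_D({\bf a}-1)$ (the Bernstein-Sato polynomial of $D$ evaluated at ${\bf a}-1=(a_0-1,a_1,\dots,a_r)$, which makes sense as $b_D$ is genuinely a polynomial), one obtains $\tau(D)P=b_D({\bf a}-1)P$. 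Hence $b_{\tau(D)}({\bf a})=b_D({\bf a}-1)$, and by Lemma \ref{lemma-Bernstein-Sato}, $\tau(D)=D$ is equivalent to $b_D({\bf a}-1)=b_D({\bf a})$ as polynomials, which holds iff $b_D$ does not involve $a_0$. The equivalence $iii)\Leftrightarrow iv)$ is then immediate from Theorem \ref{th-Knop} and Definition \ref{tau-poly}: $h(D)(\lambda_{\bf a}+\rho)=b_D({\bf a})$, and the automorphism $\tau$ on $\mathbb{C}[A]^{W_0}$ is precisely the translation corresponding to the shift $a_0\mapsto a_0-1$.

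For $i)\Leftrightarrow ii)$, I argue as follows. If $D\in\mathcal{Z}(\mathcal{T})$, then $D$ commutes with the Euler operator $E\in\mathcal{T}_0$; using the characterization $\mathcal{T}_0=\{D\in\mathcal{T}\mid [E,D]=0\}$ noted in the proof of Proposition \ref{prop-poly-XY-graduation}, one concludes $D\in\mathcal{T}_0$, and $\tau(D)=D$ holds since $DX=XD$. Conversely, any $D\in\mathcal{T}_0$ with $\tau(D)=D$ commutes with $\mathcal{T}_0$ (commutativity of $D(V)^G$), with $X$, and hence with $X^{-1}$; since $\mathcal{T}=\mathcal{T}_0[X,X^{-1}]$ by Proposition \ref{prop-poly-XY-graduation}(2), we get $D\in\mathcal{Z}(\mathcal{T})$. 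The equality $\mathcal{Z}(\mathcal{T})=\mathcal{Z}(\mathcal{T}_0[X,Y])$ then follows: the inclusion $\subset$ uses that $\mathcal{T}_0[X,Y]\subset\mathcal{T}$ together with $\mathcal{Z}(\mathcal{T})\subset\mathcal{T}_0\subset\mathcal{T}_0[X,Y]$ (from $i)\Rightarrow ii)$); for the reverse, if $D$ is central in $\mathcal{T}_0[X,Y]$ the same argument gives $D\in\mathcal{T}_0$ and $\tau(D)=D$, and relation $(3\text{-}1\text{-}5)$ then shows $DY=Y\tau(D)=YD$ for free, so $D\in\mathcal{Z}(\mathcal{T})$ by $ii)\Rightarrow i)$. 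I do not foresee any real obstacle; the one slightly delicate point is tacitly evaluating $b_D$ on $V_{{\bf a}-1}$, which is legitimized by Lemma \ref{lemma-decomp-fractions} and the polynomial nature of $b_D$.
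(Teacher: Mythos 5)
Your proof is correct and follows essentially the same route as the paper: use the Euler grading to reduce central elements to $\mathcal{T}_0$, translate $\tau$-invariance into independence of $b_D$ from $a_0$ via Lemma \ref{lemma-Bernstein-Sato}, and then recover centrality in $\mathcal{T}$ from the unique expansion $\mathcal{T}=\bigoplus_p \mathcal{T}_0 X^p$. The only cosmetic difference is that for $ii)\Leftrightarrow iii)$ you evaluate $b_{\tau(D)}({\bf a})=b_D({\bf a}-1)$ directly on $V_{{\bf a}-1}$ (justified by Lemma \ref{lemma-decomp-fractions}), whereas the paper compares $b_{XD}$ and $b_{DX}$ — the two computations are equivalent via Proposition \ref{prop-calcul-Bernstein-Sato} — and you spell out the $\mathcal{Z}(\mathcal{T})=\mathcal{Z}(\mathcal{T}_0[X,Y])$ step, which the paper leaves as a one-line remark.
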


\begin{proof}

$i) \Rightarrow ii)$: Let $D\in{\cal Z}({\cal T})$. Then $[E,D]=0$, hence $D\in {\cal T}_{0}$, and $[D,X]=0$.

$ii) \Rightarrow iii)$:  Let $D\in{\cal T}_{0}$. If $XD=DX$ then, from the definitions we have  $b_{XD}(a_{0},a_{1}, \dots, a_{r})=b_{D}(a_{0},a_{1}, \dots, a_{r})=b_{DX}(a_{0},a_{1}, \dots, a_{r})=b_{D}(a_{0}+1,a_{1}, \dots, a_{r})$, hence $b_{D}(a_{0},a_{1}, \dots, a_{r})$ does not depend on $a_{0}.$

$iii) \Rightarrow i)$: Suppose that for $D\in {\cal T}_{0}$, the Bernstein-Sato polynomial does not depend on $a_{0}$. Then the elements $XD$ and $DX$ in ${\cal T}_{1}$ have the same Bernstein-Sato polynomial. Hence $XD=DX$ (Lemma \ref{lemma-Bernstein-Sato}). Then from Proposition \ref {prop-poly-XY-graduation} 2) we see that $D\in {\cal Z}({\cal T})$.

$iii) \Rightarrow iv)$: Let $D\in {\cal T}_{0}$ such that $b_{D}$ does not depend on  $a_{0}$. Then $b_{DY}(a_{0},a_{1},\dots,a_{r})=b_{D}(a_{0}-1,a_{1},\dots,a_{r})b_{Y}(a_{0},a_{1},\dots,a_{r})=b_{D}(a_{0},a_{1},\dots,a_{r})b_{Y}(a_{0},a_{1},\dots,a_{r})=b_{YD}(a_{0},a_{1},\dots,a_{r})$. Hence $DY=YD$.

$iv) \Rightarrow iii)$: If $DY=YD$, then $b_{DY}(a_{0},a_{1},\dots,a_{r})=b_{D(a_{0}-1,a_{1},\dots,a_{r})}b_{Y}(a_{0},a_{1},\dots,a_{r})=b_{YD}(a_{0},a_{1},\dots,a_{r})=b_{Y}(a_{0},a_{1},\dots,a_{r})b_{D}(a_{0},a_{1},\dots,a_{r})$. Hence $b_{D}$ does not depend  on $a_{0}$.

The equivalence of $iii)$ and $v)$ is obvious as $h(D)(-\lambda+\rho)=b_{D}(\lambda)$.

From $ii)$ we obtain that ${\cal Z}({\cal T})$ is also the center of ${\cal T}_{0}[X,Y]$.

\end{proof}

\begin{rem}\label{rem-comX-comY}

As a consequence of the preceding Proposition it is worthwhile noticing that if $D\in D(V)^{G'}$ (or $D\in {\cal T}$)  commutes with two operators among $(X,E,Y)$, then $D$ commutes with the third one. This is a well known  property if $(X,E,Y)$ is an ${\go {sl}}_{2}$-triple. But we know from \cite{Igusa} that except if  $\Delta_{0}$ is  quadratic or linear the Lie algebra generated by $(X,E,Y)$ is infinite dimensional. We will see that the associative algebra generated by $(X,E,Y)$ over ${\cal Z}({\cal T})$ is "similar" to ${\cal U}({\go {sl}}_{2}({\cal Z}({\cal T}))$, see Theorem \ref {th-generateurs-relations}  below.
\end{rem}

 Define a linear form $\mu$ on ${\go a}^*$ by
$$\mu(a_{0}\lambda_{0}+\dots+a_{r}\lambda_{r})=\sum_{i=0}^r a_{i}d_{i}=b_{E}({\bf a})\quad ({\bf a}=(a_{0},\dots,a_{r})\in \C^{r+1})$$
($\mu$ is the {\it degree form}, as its value on  ${\bf a}=(a_{0},\dots,a_{r})\in \N^{r+1}$ is equal to the degree of the polynomials in $V_{\bf a}$.  
Define also 
$${\cal M}=\{ \lambda\in {\go a}^*\,|\, \mu(\lambda)=0\} \text{ and } M= {\cal M}+\rho\subset A.$$ 
Note that $M=\{\lambda+\rho\in A\,|\, h(E)(-\lambda+\rho)=0\}$. As $h(E)$ is $W_{0}$-invariant, so is  the set $M$. Set 
$$I(M)= \{P\in \C[A]^{W_{0}}\,|\, P_{|_{M}}=0\}.$$
\vskip 5pt
The key lemma is the following.

\begin{lemma}\label{lemma-key} \hfill

 We have $I(M)=\C[A]^{W_{0}}h(E)$ and 
$$\C[A]^{W_{0}}=\C[A]^{W_{0},\tau}\oplus I(M).$$  
\end{lemma}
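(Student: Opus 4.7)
The plan divides into two main assertions. The first, $I(M)=\C[A]^{W_{0}}h(E)$, follows from unique factorization. The inclusion $\supset$ is immediate since $h(E)$ vanishes on $M$. For the reverse, $h(E)(\lambda+\rho)=\sum_{i=0}^{r}d_{i}a_{i}$ is a polynomial of degree one in the coordinates $(a_{0},\ldots,a_{r})$ on $A$, hence irreducible in $\C[A]$, with vanishing locus $M$. Any $P\in\C[A]$ with $P_{|M}=0$ therefore factors as $P=h(E)Q$ in $\C[A]$; if $P$ is $W_{0}$-invariant, so is $Q$, since $h(E)$ is $W_{0}$-invariant (being the image under $h$ of the $G$-invariant operator $E$) and $\C[A]$ is a domain.

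For the direct sum, triviality of the intersection $\C[A]^{W_{0},\tau}\cap I(M)$ is easy: since $\tau$ acts on $A$ as translation by $-\lambda_{0}$, a $\tau$-invariant polynomial is independent of the coordinate $a_{0}$ dual to $\lambda_{0}$, hence lies in $\C[a_{1},\ldots,a_{r}]$. As $M$ is the affine hyperplane $d_{0}a_{0}+\sum_{i\ge 1}d_{i}a_{i}=0$, parametrized bijectively by $(a_{1},\ldots,a_{r})$ via $a_{0}=-\frac{1}{d_{0}}\sum_{i\ge 1}d_{i}a_{i}$, the restriction map $\C[A]^{\tau}\to\C[M]$ is injective, so any element of the intersection vanishes.

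The main step is to show $\C[A]^{W_{0}}=\C[A]^{W_{0},\tau}+I(M)$, for which the critical preliminary observation is that $\tau$ commutes with $W_{0}$ on $\C[A]$. Since $\Delta_{0}$ is $G$-invariant (a consequence of the one-dimensional quotient hypothesis), its infinitesimal character $\lambda_{0}$ is trivial on ${\go g}'$, hence lies in ${\go c}^{*}\subset{\go t}^{*}$. The Weyl group $W$ fixes ${\go c}^{*}$ pointwise (roots vanish on ${\go c}$), so $W_{0}\subset W$ also fixes $\lambda_{0}$, and the translation $\tau$ commutes with the $W_{0}$-action on $A$; in particular, $\C[A]^{\tau}$ is $W_{0}$-stable. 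Now given $P\in\C[A]^{W_{0}}$, Euclidean division by $h(E)$, viewed as a polynomial of degree one in $a_{0}$ over $\C[a_{1},\ldots,a_{r}]$ with invertible leading coefficient $d_{0}$, yields a unique decomposition $P=P_{0}+h(E)Q$ with $P_{0}\in\C[a_{1},\ldots,a_{r}]=\C[A]^{\tau}$ and $Q\in\C[A]$. Applying any $w\in W_{0}$ and using $wP=P$, $wh(E)=h(E)$, and $wP_{0}\in\C[A]^{\tau}$ gives a second decomposition of the same form; uniqueness forces $wP_{0}=P_{0}$ and $wQ=Q$. Thus $P_{0}\in\C[A]^{W_{0},\tau}$ and $h(E)Q\in I(M)$, completing the decomposition.

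The main obstacle is establishing the commutation of $\tau$ with $W_{0}$: without it the Euclidean remainder $P_{0}$ need not be $W_{0}$-invariant. This commutation hinges on $\lambda_{0}$ being a central character, which is precisely where the one-dimensional quotient hypothesis enters the argument.
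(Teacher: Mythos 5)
Your proof is correct and follows essentially the same route as the paper: both hinge on the observation that $W_0$ fixes $\lambda_0$ (because $\lambda_0$, being a character of ${\go g}$, is trivial on ${\go t}'$ and hence lies in ${\go c}^*$, which $W$ fixes pointwise), and both split a $W_0$-invariant polynomial into a part independent of the $a_0$-coordinate plus a multiple of $h(E)$. The paper packages this as a direct sum $A=M\oplus F$ with $F=\C\lambda_0$ and constructs the $W_0$-equivariant projection $P\mapsto\widetilde{P_{|M}}$, whereas you package the same decomposition as Euclidean division by $h(E)$ in the variable $a_0$ and then invoke uniqueness plus $W_0$-equivariance; these yield the identical $P_0=\widetilde{P_{|M}}$. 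One small misstatement: $\Delta_0$ is $G'$-invariant (a relative invariant of $G$ with character $\lambda_0$), not $G$-invariant — but the conclusion you draw from it, that $\lambda_0$ is trivial on ${\go g}'$ and hence $W_0$-fixed, is exactly right.
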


\begin{proof}

Let $P\in I(M)$.  It is a polynomial on the affine subspace $A\subset {\go t}^*$ which vanishes on $M$ which is the set of zeros of the irreducible polynomial $h(E)$. Therefore $P=h(E)Q$. As $P$ and $h(E)$ are $
W_{0}$-invariant, so is also the polynomial $Q$. Hence $I(M)\subset\C[A]^{W_{0}}h(E)$. As the reverse inclusion is obvious we get $I(M)=\C[A]^{W_{0}}h(E)$.

Let $F=\C\lambda_{0}\subset {\go a}^*$. As obviously ${\go a}^*={\cal M}\oplus F$, we have $A=M\oplus F$.    Remember that ${\go t}={\go c}\oplus {\go t}'$ where  ${\go c}$ is the center of ${\go g}$. The infinitesimal character $\lambda_{0}$ is a character of ${\go g}$, and is therefore trivial on ${\go t}'\subset{\go g}'$.  As any $w_{0}\in W_{0}$ fixes pointwise the center ${\go c}$ of ${\go g}$, we see that $F$ is pointwise fixed by $W_{0}$.

Let $Q\in \C[M]^{W_{0}}$. Define
$$\widetilde{Q}(m+f)=Q(m), \,\,\, \text{ for all } m\in M, f\in F.$$
From the preceding discussion we obtain that $\widetilde{Q}$ is $W_{0}$-invariant, in other words $\widetilde{Q}\in \C[A]^{W_{0}}$. But in fact $\widetilde{Q}$ is also $\tau$-invariant: $\tau(\widetilde{Q})(m+ f)=\widetilde{Q}(m+f-\lambda_{0})=Q(m)=\widetilde{Q}(m+f)$. Hence $\widetilde{Q}\in \C[A]^{W_{0},\tau}$, in other words any $W_{0}$-invariant polynomial on $M$ can be extended to a $(W_{0},\tau)$-invariant polynomial on $A$. This extension is in fact unique: for any $\tau$-invariant extension $\widetilde{\widetilde{Q}}$ of $Q$ we have $\widetilde{\widetilde{Q}}(m+x\lambda_{0})= \widetilde{\widetilde{Q}}(m+(x+1)\lambda_{0})$ and hence $\widetilde{\widetilde{Q}}=\widetilde{Q}$. Hence we have proved that the restriction map:
$$\begin{array}{rcl}\C[A]^{W_{0},\tau}&\longrightarrow& \C[M]^{W_{0}}\\
P&\longmapsto&P_{|_{M}}
\end{array}$$
is bijective (and therefore 	$\C[A]^{W_{0},\tau}\cap I(M)= \{0\}$) and the inverse map is $Q\longmapsto \widetilde Q$. Now for $P\in \C[A]^{W_{0}}$ we can write:
$$P=\widetilde{P_{|_{M}}} +(P-\widetilde{P_{|_{M}}}).$$
From the  discussion above we have $\widetilde{P_{|_{M}}} \in \C[A]^{W_{0},\tau}$, and  $(P-\widetilde{P_{|_{M}}})\in I(M)$.

\end{proof}

\begin{theorem}\label{th-key-structure-T0}\hfill

$1)$ ${\cal T}_{0}=D(V)^G={\cal Z}({\cal T})\oplus E{\cal T}_{0}$

$2)$ Any element $H\in D(V)^G$ can be uniquely  written in the form 
$$H= H_{0}+EH_{1}+E^2H_{2}\dots+E^kH_{k}$$
where $H_{i}\in {\cal Z}({\cal T})$, $i=1,2,\dots,k \in \N$.
\end{theorem}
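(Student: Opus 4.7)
The plan is to transport both assertions to the polynomial picture via the Harish--Chandra isomorphism $h:D(V)^G\to \C[A]^{W_{0}}$ of Theorem \ref{th-Knop}, and to then read them off from Lemma \ref{lemma-key}.

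For assertion $1)$, I will identify under $h$ the three relevant subspaces of ${\cal T}_{0}$. By Proposition \ref{prop-centre-T} (iv), $h$ sends ${\cal Z}({\cal T})$ bijectively onto $\C[A]^{W_{0},\tau}$. Since ${\cal T}_{0}=D(V)^G$ is commutative (Howe--Umeda, Theorem \ref{th-caracterisation-MF}), the ideal $E{\cal T}_{0}$ satisfies $h(E{\cal T}_{0})=h(E)\,\C[A]^{W_{0}}=I(M)$ by the first conclusion of Lemma \ref{lemma-key}. The direct sum decomposition $\C[A]^{W_{0}}=\C[A]^{W_{0},\tau}\oplus I(M)$ of Lemma \ref{lemma-key} then pulls back under $h^{-1}$ to the claimed ${\cal T}_{0}={\cal Z}({\cal T})\oplus E{\cal T}_{0}$.

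For assertion $2)$, I will iterate $1)$: given $H\in D(V)^G$, part $1)$ yields $H=H_{0}+EH^{(1)}$ with $H_{0}\in{\cal Z}({\cal T})$ and $H^{(1)}\in{\cal T}_{0}$; applying $1)$ to $H^{(1)}$ gives $H^{(1)}=H_{1}+EH^{(2)}$, and so on. The delicate point is that this iteration terminates and that the resulting expansion is unique. For this, I will use the coordinates $A=M\oplus F$ with $F=\C\lambda_{0}$ already introduced in the proof of Lemma \ref{lemma-key}: in those coordinates $h(E)$ is (up to the scalar $d_{0}$) the linear coordinate $x$ along $F$, while $W_{0}$ fixes $F$ pointwise and $\tau$ acts by $x\mapsto x-1$. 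This identifies $\C[A]^{W_{0},\tau}$ with $\C[M]^{W_{0}}$ (viewed as polynomials independent of $x$) and exhibits $\C[A]^{W_{0}}=\C[M]^{W_{0}}[x]$ as an honest univariate polynomial ring in $h(E)$ over $\C[A]^{W_{0},\tau}$. Transporting back via $h^{-1}$, this gives $D(V)^G=\bigoplus_{k\geq 0}E^k{\cal Z}({\cal T})$, which simultaneously yields existence (the integer $k$ is bounded by the $x$-degree of $h(H)$) and uniqueness of the expansion.

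I do not anticipate a genuine obstacle: the theorem is essentially a repackaging of Lemma \ref{lemma-key}. The only step that warrants a moment's care is the termination of the iteration in $2)$, which is why I would rather invoke the explicit $(M,x)$-coordinate description of $A$ and conclude that $\C[A]^{W_{0}}$ is a true univariate polynomial ring over $\C[A]^{W_{0},\tau}$, than iterate the decomposition in $1)$ abstractly.
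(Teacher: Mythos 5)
Your argument is correct and follows the same route as the paper's: transport via the Harish--Chandra isomorphism and read off assertion $1)$ from Lemma \ref{lemma-key}. For assertion $2)$ the paper iterates $1)$ and terminates by noting that each pass drops the $a_{0}$-degree of the Bernstein--Sato polynomial by one (so the remainder lands in ${\cal Z}({\cal T})$ once that degree is exhausted, by Proposition \ref{prop-centre-T}); your reformulation, that $\C[A]^{W_{0}}=\C[M]^{W_{0}}[x]=\C[A]^{W_{0},\tau}[h(E)]$ is an honest univariate polynomial ring over $\C[A]^{W_{0},\tau}$, is the same observation packaged so that existence, termination and uniqueness are all immediate from uniqueness of the $x$-expansion, and it yields the cleaner statement $D(V)^{G}={\cal Z}({\cal T})[E]$ as a polynomial algebra.
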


\begin{proof} 

Through the Harish-Chandra isomorphism $h$, the algebra $D(V)^G={\cal T}_{0}$ corresponds to $\C[A]^{W_{0}}$, the algebra ${\cal Z}({\cal T})$ corresponds to  $\C[A]^{W_{0},\tau}$ and the ideal  $E{\cal T}_{0}$ corresponds to $I(M)$. Therefore the first assertion is just the pull back by $h$ of the decomposition obtained in Lemma \ref{lemma-key}.

An element $H\in D(V)^G$ can therefore be uniquely written $H=H_{0}+EH^1$, with $H_{0} \in {\cal Z}({\cal T})$, and $H^1 \in  {\cal T}_{0}$. By induction we obtain a  decomposition $H= H_{0}+EH_{1}+E^2H_{2}\dots+E^{k-1}H_{k-1}+E^kH^{k}$ where $H_{0},\dots,H_{k-1}\in {\cal Z}({\cal T})$, and $H^k\in {\cal T}_{0}$. The process stops because if $k$ is greater than  the degree in $a_{0}$ of $b_{H}$, then necessarily $H^k=0$ (see Proposition \ref{prop-centre-T}).

\end{proof}

From the preceding Theorem and Proposition \ref{prop-poly-XY-graduation} we obtain immediately the following Corollary.

\begin{cor}\label{cor-decomp-D(V)G'}\hfill

 $1)$ Let $D\in {\cal T} $, then $D$ can be  written uniquely  in the form:
$$D=\sum_{k\in {\bb Z}, \ell \in {\bb N}}H_{k,\ell}E^{\ell}X^{k} \text{  or  } D=\sum_{k\in {\bb Z}, \ell \in {\bb N}}H_{k,\ell} X^{k}E^{\ell} \text{  \rm (finite \,\,sums)}$$
where $H_{k,\ell}\in {\cal   Z}( {\cal   T})$

 $2)$ Let $D\in {\cal T}_{0}[X,Y] $, then $D$ can be   written uniquely  in the form:
$$\begin{array}{ccl}D=&\sum_{k\in {\bb N}^*, \ell \in {\bb N}}H_{k,\ell}E^{\ell}Y^{k}&+\sum_{r\in {\bb N}, s \in {\bb N}}H'_{r,s}E^{s}X^{r} \text{   {\rm (finite \,\,sum)}   or  }\\
D=&\sum_{k\in {\bb N}^*, \ell \in {\bb N}}H_{k,\ell} Y^{k}E^{\ell}&+\sum_{r\in {\bb N}, s \in {\bb N}}H'_{r,s} X^{r}E^{s}  \text{   {\rm (finite \,\,sum)}}
\end{array}$$
where $H_{k,\ell}, H'_{r,s}\in {\cal   Z}( {\cal   T})$

\end{cor}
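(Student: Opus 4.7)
The plan is to derive the corollary by straightforward combination of the two structural results immediately preceding it: the $(X,Y)$-expansion of elements of $\mathcal{T}$ and $\mathcal{T}_0[X,Y]$ with coefficients in $\mathcal{T}_0$ provided by Proposition \ref{prop-poly-XY-graduation}, and the $E$-expansion of elements of $\mathcal{T}_0$ with coefficients in $\mathcal{Z}(\mathcal{T})$ provided by Theorem \ref{th-key-structure-T0}. No new argument is required; it is essentially a matter of substituting one expansion into the other and tracking uniqueness.

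For assertion $1)$, I start with $D\in\mathcal{T}$. By Proposition \ref{prop-poly-XY-graduation} $3)$, there is a unique expression $D=\sum_{k\in\mathbb{Z}} u_k X^k$ with $u_k\in\mathcal{T}_0$ (finite sum). Applying Theorem \ref{th-key-structure-T0} $2)$ to each $u_k$ gives a unique decomposition $u_k=\sum_{\ell\geq 0} H_{k,\ell} E^{\ell}$ with $H_{k,\ell}\in\mathcal{Z}(\mathcal{T})$. Substitution yields the first form $D=\sum_{k,\ell} H_{k,\ell}E^{\ell}X^{k}$, and the uniqueness of $H_{k,\ell}$ is inherited from the uniqueness at each step. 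For the second ordering, I instead start from the companion form $D=\sum_{k\in\mathbb{Z}} X^{k} u'_{k}$ of Proposition \ref{prop-poly-XY-graduation} $3)$, decompose each $u'_k$ by Theorem \ref{th-key-structure-T0} $2)$ as $u'_k=\sum_\ell H_{k,\ell} E^{\ell}$, and then use the fact that $H_{k,\ell}\in\mathcal{Z}(\mathcal{T})$ commutes with $X$ (by Proposition \ref{prop-centre-T}) to move $H_{k,\ell}$ to the left of $X^{k}$, giving $D=\sum_{k,\ell} H_{k,\ell}X^{k}E^{\ell}$.

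Assertion $2)$ is proved the same way, using this time the decomposition of $\mathcal{T}_0[X,Y]=D(V)^{G'}$ given by Proposition \ref{prop-poly-XY-graduation} $1)$ and $3)$. Writing uniquely $D=\sum_{i>0} u_i Y^{i}+\sum_{i\geq 0} v_i X^{i}$ with $u_i,v_i\in\mathcal{T}_0$, and expanding each $u_i$, $v_i$ via Theorem \ref{th-key-structure-T0} $2)$ as $u_i=\sum_\ell H_{i,\ell}E^{\ell}$, $v_i=\sum_s H'_{i,s}E^{s}$, produces the announced sum $\sum_{k,\ell} H_{k,\ell}E^{\ell}Y^{k}+\sum_{r,s} H'_{r,s}E^{s}X^{r}$. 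The alternative ordering $H_{k,\ell}Y^{k}E^{\ell}+H'_{r,s}X^{r}E^{s}$ is obtained by starting from the dual form $D=\sum Y^{i}u_i+\sum X^{i}v_i$ of Proposition \ref{prop-poly-XY-graduation} $3)$ and using again that elements of $\mathcal{Z}(\mathcal{T})$ commute with $X$ and $Y$ (the latter by Remark \ref{rem-comX-comY}).

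The only subtlety to watch for is that $E$ does \emph{not} commute with $X$ or $Y$ (we have $[E,X]=d_0 X$, $[E,Y]=-d_0 Y$), so the two orderings $H_{k,\ell}E^{\ell}X^{k}$ and $H_{k,\ell}X^{k}E^{\ell}$ are genuinely different monomials and one cannot pass from one to the other by a direct commutation inside a single summand. This is precisely why each form is obtained from the appropriate companion decomposition in Proposition \ref{prop-poly-XY-graduation} $3)$. Once that choice is made, the central coefficients $H_{k,\ell}$ commute with everything in sight, and the argument is entirely routine; there is no real obstacle beyond this bookkeeping.
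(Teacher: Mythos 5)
Your proof is correct and follows exactly the path the paper intends: the paper introduces the corollary with the phrase that it follows ``immediately'' from Theorem \ref{th-key-structure-T0} and Proposition \ref{prop-poly-XY-graduation}, and your substitution of the $E$-expansion of ${\cal T}_0$ over ${\cal Z}({\cal T})$ into the $X$- (resp.\ $X,Y$-) expansion of ${\cal T}$ (resp.\ ${\cal T}_0[X,Y]$) over ${\cal T}_0$ is precisely that argument, with uniqueness inherited from the uniqueness of each stage. Your remark about why the two orderings $E^{\ell}X^{k}$ and $X^{k}E^{\ell}$ require the two companion forms of Proposition \ref{prop-poly-XY-graduation} $3)$ (rather than a naive commutation) is a useful clarification, not a gap.
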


\begin{cor}\label{cor-decomp-pol-sym}
Let $P\in { \bb C}[A]^{W_{0} }  $. Then $P$ can be uniquely written in the form
$$P(-\lambda+\rho)=\sum_{i=0}^p\alpha_{i}(-\lambda+\rho)(a_{0}d_{0}+a_{1}d_{1}+\dots+a_{r}d_{r})^i$$
where $\alpha_{i}\in { \bb C}[A]^{W_{0},\tau}$ and where $\lambda=a_{0}\lambda_{0}+a_{1}\lambda_{1}+\dots+a_{r}\lambda_{r}\in \go{a}^*$.
\end{cor}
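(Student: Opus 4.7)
The plan is to translate Theorem \ref{th-key-structure-T0}(2) from the differential operator side to the polynomial side via the Harish-Chandra isomorphism $h$ of Theorem \ref{th-Knop}. The dictionary we have built up is the following: $h$ realizes an isomorphism $D(V)^G = {\cal T}_0 \xrightarrow{\sim} \C[A]^{W_0}$; under this isomorphism, ${\cal Z}({\cal T})$ corresponds to $\C[A]^{W_0,\tau}$ by Proposition \ref{prop-centre-T} (the equivalence $i)\Leftrightarrow iv)$); and the Euler operator $E$ corresponds to the polynomial $h(E)(\lambda+\rho) = b_E({\bf a}) = a_0 d_0 + a_1 d_1 + \dots + a_r d_r$ by Example \ref{example-b_Y-commutative} and the definition of $h$.

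Given $P \in \C[A]^{W_0}$, I would first set $H = h^{-1}(P) \in D(V)^G$. By Theorem \ref{th-key-structure-T0}(2), $H$ admits a unique decomposition
\begin{equation*}
H = H_0 + E H_1 + E^2 H_2 + \dots + E^p H_p
\end{equation*}
with $H_i \in {\cal Z}({\cal T})$. Applying the algebra homomorphism $h$ and using $h(E)(\lambda+\rho) = a_0 d_0 + \dots + a_r d_r$, this yields
\begin{equation*}
P(\lambda+\rho) = \sum_{i=0}^p \alpha_i(\lambda+\rho) (a_0 d_0 + \dots + a_r d_r)^i,
\end{equation*}
where $\alpha_i := h(H_i) \in \C[A]^{W_0,\tau}$, which is the desired form.

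For uniqueness, suppose that $P$ admits two such decompositions with coefficients $\alpha_i$ and $\alpha_i'$ in $\C[A]^{W_0,\tau}$. Pulling back via $h^{-1}$ gives two decompositions $H = \sum_i E^i H_i = \sum_i E^i H_i'$ with $H_i, H_i' \in {\cal Z}({\cal T})$, which must coincide by the uniqueness in Theorem \ref{th-key-structure-T0}(2); since $h$ is injective on $\C[A]^{W_0,\tau}$ (being an isomorphism on ${\cal T}_0$), we conclude $\alpha_i = \alpha_i'$ for every $i$.

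There is no real obstacle in this argument beyond bookkeeping: the entire content of the corollary is already present in Theorem \ref{th-key-structure-T0}(2) together with the identification $h(E)(\lambda+\rho) = \mu(\lambda)$. The only point that deserves a mild check is the compatibility of the polynomial $h(E)$ with the degree form $\mu$ at arbitrary (non-integral) $\lambda \in \mathfrak{a}^*$, which is automatic since both sides are polynomial functions on the affine space $A$ that agree on the Zariski-dense subset where ${\bf a} \in \N^{r+1}$.
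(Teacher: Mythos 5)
Your proof is correct and follows essentially the same route as the paper: both simply push the unique decomposition of Theorem \ref{th-key-structure-T0}(2) through the Harish-Chandra isomorphism $h$, using $h(E)(\lambda+\rho)=a_{0}d_{0}+\dots+a_{r}d_{r}$ and the identification ${\cal Z}({\cal T})\simeq\C[A]^{W_{0},\tau}$ from Proposition \ref{prop-centre-T}. Your version merely spells out the uniqueness bookkeeping that the paper leaves implicit.
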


\begin{proof} 

 As $h(E)(-\lambda+\rho)=a_{0}d_{0}+a_{1}d_{1}+\dots+a_{r}d_{r}$, the preceding  decomposition  is just the image through the Harish-Chandra isomorphism  of the decomposition in Theorem 
 \ref{th-key-structure-T0} $2)$.

\end{proof}

\begin{rem}\label{rem-action-affine}Let us make some remarks about the $W_{0}$-action on $A$. In fact it is easy to see that as $W_{0}$ stabilizes the affine space $A={\go a}^*+\rho$ it also stabilizes ${\go a}^*$ (this is implicit in Theorem \ref{th-Knop}). Moreover if we denote by $0_{\rho}$ the barycenter of the $W_{0}$-orbit  of $\rho$, then $0_{\rho}$ is a fixed point of the $W_{0}$-action on $A$  which is in $M$. As $\C[A]^{W_{0}}=\C[{\go a}^*+\rho]^{W_{0}}=\C[{\go a}^*+0_{\rho}]^{W_{0}}\simeq \C[{\go a}^*]^{W_{0}}$, and as $ {\cal T}_{0}=D(V)^G\simeq \C[A]^{W_{0}}$ is a polynomial algebra in $r+1$ variables by Theorem \ref {th-generateurs-op-inv}, the group $W_{0}$ acts as a reflection group on ${\go a}^*$ by the Shephard-Todd-Chevalley Theorem (this is a part of  Knop's argument for Theorem \ref{th-Knop}). Hence by the Theorem of Chevalley, the $r+1$ algebraically independent generators of the algebra $\C[A]^{W_{0}}\simeq \C[{\go a}^*]^{W_{0}}$   can be chosen to be homogeneous, either as functions on the vector space ${\go a}^*$, or as functions on $A$, for the vector space structure on $A$  defined by taking $0_{\rho}$ as origin. 

%The polynomial   $h(E)(-\lambda+\rho)=b_{E}(\lambda)=a_{0}d_{0}+\dots+a_{r}d_{r}$ is $W_{0}$-invariant and of degree one. Therefore  if $P_{0}, P_{1},\dots,P_{r}$ is any  set of algebraically independent homogeneous generators of $\C[A]^{W_{0}}$, there must be one, say $P_{r}$ which is of degree 1. Then $P_{0},P_{1},\dots,P_{r-1},h(E) $ is still a set of algebraically independent generators.  Going back to differential operators this proves that if $B_{0}, B_{1}, \dots, B_{r}$ is   any  set of algebraically independent generators of ${\cal T}_{0}=D(V)^G$  then there exists at least one among them, say $B_{r}$, which can be replaced by the Euler operator $E$ (compare with Remark \ref{rem-Capelli} d)).
\end{rem}
\vskip 5pt

We will now describe more precisely the algebra $ {\cal   Z}( {\cal   T})$.  
\begin{theorem}\label{th-Z(T)}\hfill

$1)$  $ {\cal   Z}( {\cal   T})$ is a polynomial algebra in $r$ variables. For $D\in {\cal T}_{0}$, let us denote by $\overline{D}$ the projection of $D$ on   $ {\cal   Z}( {\cal   T})$ according to the decomposition  ${\cal T}_{0}={\cal Z}({\cal T})\oplus E{\cal T}_{0}$.    Remember  from Theorem \ref{th-generateurs-op-inv}  that the set  $R_{0},\dots,R_ {r-1},R_{r}$ of  Capelli operators associated to the invariants $\Delta_{0}, \Delta_{1}, \dots, \Delta_{r}$ ordered by decreasing degree is   a set algebraically independent generators of ${\cal T}_{0}$.  Then  $\{\overline{{R}_{0}},\dots,\overline{{R}_{r-1}}\}$  is a set of algebraically independent generators of $ {\cal   Z}( {\cal   T})$.  

$2)$ Let $D$ be an element of ${\cal T}_{0}$ and let $b_{D}$ be its Bernstein-Sato polynomial. Then the Bernstein-Sato polynomial of $\overline{D}$ is given  by 
$$b_{\overline{D}}(a_{0},a_{1},\dots,a_{r})=b_{D}(-\frac{(a_{1}d_{1}+\dots+a_{r}d_{r})}{d_{0}}, a_{1},\dots,a_{r}).$$
 
 \end{theorem}
 
 \begin{proof}

 $1)$ Let us remark first that ${\cal Z}({\cal T})$ is already known to be a polynomial algebra from a result of Knop (\cite{Knop-1}). He has  proved that for a regular action of a reductive group on a smooth affine variety the center of the ring of invariant differential operators is always a polynomial algebra. We give here a direct proof and obtain some extra information. We know from Proposition \ref{prop-centre-T} that ${\cal Z}({\cal T})$ is isomorphic, through the Harish-Chandra isomorphism $h$, to $\C[A]^{W_{0},\tau}$. From the proof of Lemma \ref {lemma-key} we know that $W_{0}$ stabilizes $M$ and that  $\C[A]^{W_{0},\tau}\simeq \C[M]^{W_{0}}= (\C[A]^{W_{0}})_{|_{M}}$ .  As $W_{0}$ is a reflection group on $A$ (this means that it is generated by the reflections it contains), so is ${W_{0}}_{|_{M}}$. Therefore $\C[M]^{W_{0}}$ (and hence ${\cal Z}({\cal T})$)  is a polynomial algebra in $r=\dim M$ variables by Chevalley's Theorem. We know from Remark  \ref{rem-Capelli} d) that  $\{R_ {0},\dots,R_ {r-1},E\}$ is also a  set  algebraically independent generators of ${\cal T}_{0}$, hence $\{h(R_ {0}),\dots,h(R_ {r-1}),h(E)\}$ is a set of algebraically independent generators of $\C[A]^{W_{0}}$. As $h(E)_{|_{M}}=0$ we obtain that $\C[M]^{W_{0}}=\C[h(R_ {0})_{|_{M}},\dots,h(R_ {r-1})_{|_{M}}]$. As the transcendence degree of $Frac(\C[M]^{W_{0}})$ over $\C$ is $r$, the generators $h(R_ {0})_{|_{M}},\dots,h(R_ {r-1})_{|_{M}}$ are algebraically independent. Taking their inverse image under $h$ gives the first assertion of the Theorem. 
 
 $2)$ As we have seen  the decomposition ${\cal T}_{0}={\cal Z}({\cal T})\oplus E{\cal T}_{0}$ is nothing else but the inverse image under $h$ of the decomposition $\C[A]^{W_{0}}=\C[A]^{W_{0},\tau}\oplus I(M)$. Let $D\in {\cal T}_{0}$. From the proof of  Lemma \ref {lemma-key} we have $h(\overline{D})=\widetilde{h(D)_{|_{M}}}$, where $\widetilde{h(D)_{|_{M}}}$ is the unique $(W_{0},\tau)$-invariant extension to $A$ of $h(D)_{|_{M}}$. For $\lambda=a_{0}\lambda_{0}+\dots+a_{r}\lambda_{r} \in {\go a}^*$, we have $h(E)(\lambda+\rho)=b_{E}(-\lambda)=-(a_{0}d_{0}+\dots+a_{r}d_{r})=-\mu(\lambda)$ (the  degree form). Remember also that ${\go a}^*={\cal M}\oplus F$, where $F=\C\lambda_{0}$, where ${\cal M}=\ker(\mu)$. Let us write $\lambda=m_{\lambda}+ \alpha\lambda_{0}$, according to this decomposition. Then $b_{E}(\lambda)=\alpha b_{E}(\lambda_{0})=\alpha d_{0}$. Hence $\alpha= \frac{\mu(\lambda)}{d_{0}}$ and $m_{\lambda}=\lambda-\frac{\mu(\lambda)}{d_{0}}\lambda_{0}$. Then we obtain:
 $$\begin{array}{rl}
 b_{\overline{D}}(\lambda)=h({\overline{D}})(-\lambda+\rho)&=\widetilde{h(D)_{|_{M}}}(-\lambda+\rho)= \widetilde{h(D)_{|_{M}}}(-\lambda+\frac{\mu(\lambda)}{d_{0}}\lambda_{0}-\frac{\mu(\lambda)}{d_{0}}\lambda_{0}+\rho)\\
 \\
 {}&=\widetilde{h(D)_{|_{M}}}(-\lambda+\frac{\mu(\lambda)}{d_{0}}\lambda_{0}+\rho)\\
 \\
 {}& =h(D)_{|_{M}}(-\lambda+\frac{\mu(\lambda)}{d_{0}}\lambda_{0}+\rho)=h(D)(-\lambda+\frac{\mu(\lambda)}{d_{0}}\lambda_{0}+\rho)\\
 \\
{}&=b_{D}(\lambda-\frac{\mu(\lambda)}{d_{0}}\lambda_{0}).
 \end{array}$$
 If we translate this into the $(a_{0},\dots,a_{r})$-variables we obtain   the second assertion.
 
 \end{proof}
 
\begin{cor}\label{cor-centre}\hfill

Let $b_{Y}$ be the Bernstein-Sato operator of $Y$. For any $\ell \in \N$ the element of $End(\C[V])$ wich acts on each space $V_{\bf a}$ as the scalar multiplication by
$b_{Y}(-\frac{(a_{1}d_{1}+\dots+a_{r}d_{r})}{d_{0}}+\ell, a_{1},\dots,a_{r})$
is the differential operator $\overline{X^{1-\ell}YX^{\ell}}\in {\cal Z}({\cal T})$.  Moreover, if $(G,V^+)$ is a $PV$ of commutative parabolic type, the differential operators $\overline{X^{1-\ell}YX^{\ell}}$ ($\ell=0,1,\dots,r$) are generators of ${\cal Z}({\cal T})$.

\end{cor}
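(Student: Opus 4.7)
The plan has two parts, matching the two assertions of the corollary.

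For the first assertion, I would apply Theorem~\ref{th-Z(T)} part~2 to the operator $D = X^{1-\ell}YX^{\ell}$. This operator lies in ${\cal T}_0 = D(V)^G$: by relations~(3-1-1) it has $G$-weight zero, and its degree in the $\Z$-grading of ${\cal T}$ is $(1-\ell) + (-1) + \ell = 0$. A direct three-step computation on $V_{\bf a}$ (multiply by $\Delta_0^\ell$, apply $Y$, multiply by $\Delta_0^{1-\ell}$) then yields
$$b_{X^{1-\ell}YX^\ell}(a_0,\ldots,a_r) = b_Y(a_0+\ell, a_1,\ldots,a_r),$$
and plugging this expression into the formula of Theorem~\ref{th-Z(T)}(2) produces the announced Bernstein--Sato polynomial for $\overline{X^{1-\ell}YX^\ell}$.

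For the second assertion, assume $(G,V)$ is a regular PV of commutative parabolic type, so that $d_i = r+1-i$ (in particular $d_0 = r+1$) by the descent construction recalled in Section~2.4. Substituting the explicit formula~(3-1-3) for $b_Y$ gives
$$b_{\overline{X^{1-\ell}YX^\ell}}(a_1,\ldots,a_r) = c\prod_{j=0}^r (u_j + \ell), \quad u_j := -\frac{1}{d_0}\sum_{i=1}^r a_i d_i + \sum_{i=1}^j a_i + \frac{jd}{2}.$$
Knowing these $r+1$ polynomial expressions, as $\ell$ ranges over $\{0,1,\ldots,r\}$, is equivalent to knowing the monic polynomial $f(T) = \prod_{j=0}^r(T+u_j) \in \C[a_1,\ldots,a_r][T]$: indeed $f(T) - T^{r+1}$ has degree at most $r$ in $T$ and so is recovered by Lagrange interpolation from its $r+1$ values at $T = 0,1,\ldots,r$. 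Consequently the subalgebra of ${\cal Z}({\cal T})$ generated by the $\overline{X^{1-\ell}YX^\ell}$ for $\ell = 0,\ldots,r$ coincides with $\C[\sigma_1(u),\ldots,\sigma_{r+1}(u)]$, where $\sigma_k$ denotes the $k$-th elementary symmetric function.

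A direct telescoping calculation exploiting $d_i = r+1-i$ shows that
$$\sum_{j=0}^r u_j = \binom{r+1}{2}\frac{d}{2}$$
is a \emph{constant}; the remaining symmetric functions $\sigma_2(u),\ldots,\sigma_{r+1}(u)$ are $r$ algebraically independent non-constant polynomials in $a_1,\ldots,a_r$. To conclude, one identifies the little Weyl group $W_0$ of Theorem~\ref{th-Knop}: in the commutative parabolic case $W_0$ acts on $(u_0,\ldots,u_r)$ as the symmetric group $S_{r+1}$, a fact which reflects the Jordan-algebra / Hermitian-symmetric structure of $\Omega^+ = G/H$ (cf.\ Remark~\ref{rem-PVcom=alg.Jordan=tube}) and is consistent with Knop's description of $W_0$ for symmetric varieties. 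Then $\C[M]^{W_0} = \C[u_0,\ldots,u_r]^{S_{r+1}} = \C[\sigma_1(u),\ldots,\sigma_{r+1}(u)]$, which after absorbing the constant $\sigma_1$ equals $\C[\sigma_2(u),\ldots,\sigma_{r+1}(u)]$. Pulling this equality back through the Harish--Chandra isomorphism of Theorem~\ref{th-Z(T)}(1) yields the desired generation of ${\cal Z}({\cal T})$. The main obstacle is precisely the identification of $W_0$ as $S_{r+1}$ permuting the $u_j$'s; this can either be obtained from the restricted root structure of Hermitian symmetric spaces of tube type combined with Knop's results, or checked case by case from Table~1 and the known eigenvalues of the Capelli operators $R_0,\ldots,R_r$.
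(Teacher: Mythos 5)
Your first assertion is handled correctly and identically to the paper: compute $b_{X^{1-\ell}YX^{\ell}}(a_{0},\dots,a_{r})= b_{Y}(a_{0}+\ell,a_{1},\dots,a_{r})$, then substitute into the formula of Theorem~\ref{th-Z(T)}(2).

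For the second assertion, however, your route diverges from the paper and contains a genuine gap. The paper simply invokes Theorem~\ref{th-special-commutatif}(1): in the commutative parabolic case the operators $D_{\ell}=X^{1-\ell}YX^{\ell}$ ($\ell=0,\dots,r$) are algebraically independent generators of ${\cal T}_{0}$, and since the projection $D\mapsto\overline{D}$ is the quotient map ${\cal T}_{0}\to{\cal T}_{0}/E{\cal T}_{0}\simeq{\cal Z}({\cal T})$ (a surjective ring homomorphism, as the decomposition ${\cal T}_{0}={\cal Z}({\cal T})\oplus E{\cal T}_{0}$ displays $E{\cal T}_{0}$ as an ideal), the images $\overline{D_{\ell}}$ generate ${\cal Z}({\cal T})$. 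That cited theorem is proved not via Knop's Harish--Chandra map but via the \emph{classical} one for the symmetric space $\Omega^{+}=G/H$, where the relevant Weyl group is that of $\Sigma({\go g},{\go a})$, identified as $A_{r}$ in Lemma~\ref{lemme-systeme-Cn}; the generation of the symmetric algebra then follows from an explicit change of variable. You instead work entirely inside Knop's $\C[M]^{W_{0}}$, and your Lagrange-interpolation step plus the telescoping computation $\sum_{j}u_{j}=\binom{r+1}{2}\tfrac{d}{2}$ are both correct. But the crucial input---that Knop's little Weyl group $W_{0}$ acts on $M$ as $S_{r+1}$ permuting the coordinates $u_{0},\dots,u_{r}$, and that these $u_{j}$ (modulo the one linear relation) furnish a coordinate system on $M$---is only asserted, not proved; you yourself flag it as ``the main obstacle.'' Without that identification your argument does not close. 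The two suggestions you offer (reduced root data of Hermitian tube domains combined with Knop's theorem for symmetric varieties, or a case-by-case check against Table~1) would indeed fill the gap, but as written the proof of the second assertion is incomplete, and the paper's own argument circumvents the difficulty entirely by routing through the classical symmetric-space Harish--Chandra isomorphism where the Weyl group is already known.
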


\begin{proof} 

As  $b_{X^{1-\ell}YX^{\ell}}(a_{0},\dots,a_{r})= b_{Y}(a_{0}+\ell,a_{1},\dots,a_{r})$, the first assertion is an immediate consequence of Theorem \ref{th-Z(T)}.  If $(G,V^+)$ is a PV of commutative parabolic type, we knom from Theorem  \ref{th-special-commutatif} that the operators $X^{1-\ell}YX^{\ell}$ ($\ell = 0,\dots,r$) are (algebraically independent) generators of ${\cal T}_{0}$. 
 
\end{proof}

%%%%%%%%%%%%%%%%%%%%%%%%%%%%%%%%%%%%%
\vskip 15pt
\subsection{The  case of regular PV's of commutative parabolic type}\hfill
 \vskip 5pt

In the case where $(G,V^+)$ is a regular $PV$ of commutative parabolic type (see Example \ref{ex-commutative-PV}), we obtain some specific results.  

%First of all remember that in this case the degree $d_{0}$ of $\Delta_{0}$ is equal to $r+1$ which is the rank of $(G,V)$ as a $MF$ space. Moreover the open $G$-orbit $\Omega^+=\{x\in V^+\,|\,\Delta_{0}(x)\neq0\}$ is a symmetric space $G/H$ where $H$ is the isotropy subgroup of $I^+$. Let ${\go h}$ be the Lie algebra of $H$, and let ${\go q}$ be the orthogonal complement of ${\go h} $ in ${\go g}$ with respect to the Killing form of $\tilde{\go g}$.  The space ${\go q}$ can also be defined as the $-1$ eigenspace for the involution whose fixed point set is ${\go h}$. Let $\{\alpha_{0},\alpha_{1},\dots,\alpha_{r}\}$ be the set of stronly orthogonal roots occurring in the descent (see section ${\bf 2.4}$) and let $\{H_{\alpha_{0}},H_{\alpha_{1}},\dots,H_{\alpha_{r}}\}$ be the corresponding set of co-roots. Define ${\go a}=\sum_{i=0}^r\C H_{\alpha_{i}}$. Then ${\go a}$ is a maximal abelian subspace of ${\go q}$  (\cite{Bopp-Rubenthaler-complexe}, Proposition 5.4) and  the dual space ${\go a}^*$  can be identified with the space of restrictions of the fundamental characters $\lambda_{0},\dots,\lambda_{r}$ to ${\go a}$. This is a consequence of \cite{Bopp-Rubenthaler-complexe}, Lemme 2.5. Hence this definition of ${\go a}^*$ is coherent with the direct definition (${\go a}^*= \sum _{i=0}^r \C \lambda_{i}$) given in section ${\bf 3.4}$. in the general case.

 \begin{theorem}\label{th-special-commutatif} \hfill
 
 Let $(G,V^+)$ be a regular $PV$
  of commutative parabolic type.
  
  $1)$ The degree of $\Delta_{0}$ is equal to $r+1$ which is the rank of $(G,V^+)$ as a $MF$ space. More generally the degree of $\Delta_{i} $ is equal to $r+1-i$.
  
  $2)$ For $\ell \in \Z$ set $D_{\ell}=X^{1-\ell}YX^{\ell}$. Then $D_{0},D_{1},\dots,D_{r}$ are algebraically independent generators of ${\cal T}_{0}=D(V^+)^G$ $($i.e ${\cal T}_{0}=\C[D_{0},D_{1},\dots,D_{r}]$$)$.
  
  $3)$ We have ${\cal T}=D(\Omega^+)^{G'}=\C[X,X^{-1},Y]$ where $\C[X,X^{-1},Y]$ is the associative subalgebra of $D(\Omega^+)$ generated by $X,X^{-1},Y$.    
 
  $4)$ We have ${\cal T}_{0}[X,Y]=D(V)^{G'}= \C[X,Y,R_{1},\dots,R_{r}]$ where the $R_{i}$'s are the Capelli operators which were introduced before Theorem \ref{th-generateurs-op-inv}, and where $\C[X,Y,R_{1},\dots,R_{r}]$ is the associative subalgebra of $D(V^+)$ generated by $X,Y,$ $R_{1},\dots,R_{r}$.  \end{theorem}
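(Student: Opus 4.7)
The plan is to establish (1) first, from which (2) and (3) follow quickly. For (1), I would use the Harish-Chandra isomorphism $h\colon {\cal T}_{0}\longrightarrow \C[A]^{W_{0}}$ of Theorem \ref{th-Knop}, under which ${\cal T}_{0}$ is realized as a polynomial algebra in $r+1$ variables. Tracking the action of $D_{\ell}=X^{1-\ell}\cdot Y\cdot X^{\ell}$ on $V_{\bf a}$ and using the explicit formula for $b_{Y}$ given in Example \ref{example-b_Y-commutative}, a short computation yields
$$b_{D_{\ell}}(a_{0},a_{1},\dots,a_{r})=b_{Y}(a_{0}+\ell,a_{1},\dots,a_{r})=c\prod_{j=0}^{r}\bigl(w_{j}+\ell\bigr),\quad w_{j}:=a_{0}+a_{1}+\cdots+a_{j}+j\tfrac{d}{2}.$$

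By Lemma \ref{lemme-systeme-Cn} the restricted root system is $\Sigma({\go g},{\go a})=A_{r}$, so the little Weyl group $W_{0}$ is $S_{r+1}$; using the symmetric-space structure $\Omega^{+}=G/H$, one checks that $W_{0}$ acts on $A$ by permuting the $w_{j}$'s, so that $\C[A]^{W_{0}}\simeq \C[w_{0},\dots,w_{r}]^{S_{r+1}}$. (Alternatively: since $b_{D_{\ell}}$ is $W_{0}$-invariant for each $\ell$, the roots $-w_{0},\dots,-w_{r}$ of this polynomial in $\ell$ must be permuted by $W_{0}$, giving a homomorphism $W_{0}\to S_{r+1}$ which one matches with $|W_{0}|=(r+1)!$.) The independence and generation statements then reduce to an elementary Vandermonde argument. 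Expanding
$$f_{\ell}:=\prod_{j=0}^{r}(w_{j}+\ell)=\sum_{k=0}^{r+1}e_{k}(w_{0},\dots,w_{r})\,\ell^{r+1-k},$$
one sees that $(f_{0},f_{1},\dots,f_{r})$ is obtained from the elementary symmetric polynomials $(e_{1},\dots,e_{r+1})$ by an affine map whose linear part is the (reversed) Vandermonde matrix $(\ell^{r+1-k})_{\ell,k}$ evaluated at the $r+1$ distinct values $\ell=0,1,\dots,r$, and so is invertible. Hence $\C[f_{0},\dots,f_{r}]=\C[e_{1},\dots,e_{r+1}]=\C[w_{0},\dots,w_{r}]^{S_{r+1}}$, and pulling back through $h$ yields (1).

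For (2), Proposition \ref{prop-poly-XY-graduation} gives ${\cal T}=\bigoplus_{p\in\Z}{\cal T}_{0}X^{p}$; since (1) forces ${\cal T}_{0}=\C[X^{1-\ell}YX^{\ell}\,:\,\ell=0,\dots,r]\subset \C[X,X^{-1},Y]$, and $X,X^{-1}$ also lie in the right-hand side, both inclusions between ${\cal T}$ and $\C[X,X^{-1},Y]$ are immediate. For (3), Remark \ref{rem-Capelli} shows that in the commutative parabolic case (where $k=0$) the Capelli operator $R_{0}$ is a scalar multiple of $\Delta_{0}(x)\Delta_{0}^{*}(\partial)=XY$; combined with Theorem \ref{th-generateurs-op-inv} this gives ${\cal T}_{0}=\C[R_{0},R_{1},\dots,R_{r}]=\C[XY,R_{1},\dots,R_{r}]$, so ${\cal T}_{0}[X,Y]\subset \C[X,Y,R_{1},\dots,R_{r}]$, and the reverse inclusion is obvious.

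The main obstacle is the identification of $W_{0}$ as $S_{r+1}$ permuting the coordinates $w_{j}$: this is the bridge that converts the explicit $b_{Y}$ formula into the symmetric-function picture on which the Vandermonde argument operates. Although it follows from Lemma \ref{lemme-systeme-Cn} combined with the general description of $W_{0}$ for a symmetric space $G/H$ and the Jordan-theoretic realization of $V^{+}$ (Remark \ref{rem-PVcom=alg.Jordan=tube}), one must be careful to match the ``Jordan eigenvalue'' coordinates with the restricted roots of the $A_{r}$-system. Once that identification is in place, the Vandermonde calculation is routine and parts (2)--(3) are purely formal.
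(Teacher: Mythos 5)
Your parts (2) and (3) match the paper's arguments exactly. For part (1) you reach the same product formula $b_{D_\ell}=c\prod_{j=0}^r(w_j+\ell)$ and the same Vandermonde/symmetric-function conclusion (which the paper treats as routine but you helpfully spell out), but your route runs through Knop's Harish-Chandra isomorphism $h$ and the abstract little Weyl group $W_0$ of Theorem \ref{th-Knop}, whereas the paper runs through the \emph{classical} Harish-Chandra isomorphism $\gamma$ for the symmetric space $\Omega^+=G/H$ (citing \cite{Heckmann-Schlichtkrull}). The difference matters: the paper's $\gamma$ identifies ${\cal T}_0$ with $S({\go a})^W$ where $W$ is, by construction, the Weyl group of the restricted root system $\Sigma({\go g},{\go a})$, and Lemma \ref{lemme-systeme-Cn} then immediately gives $W=S_{r+1}$ permuting the $\overline{\alpha_i}$-coordinates; the explicit formulas for $\rho$ and $\overline{\mu}$ from \cite{Rubenthaler-Schiffmann-2} do the rest of the bookkeeping. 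Your route, by contrast, needs the extra fact that Knop's $W_0$ coincides with this restricted Weyl group $S_{r+1}$ acting by permuting the $w_j$; you correctly flag this as the main obstacle, but you do not prove it.

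Your ``alternative'' argument does not close that gap on its own. From $W_0$-invariance of $b_{D_\ell}=c\prod_j(w_j+\ell)$ you do get an embedding $W_0\hookrightarrow S_{r+1}$ (the $w_j$ are independent linear forms on $A$), hence the inclusion $\C[w_0,\dots,w_r]^{S_{r+1}}\subseteq\C[A]^{W_0}$, but injectivity of $W_0\hookrightarrow S_{r+1}$ does not give surjectivity, and both sides being polynomial rings in $r+1$ variables does not force equality (compare $\C[x^2]\subset\C[x]$). Asserting $|W_0|=(r+1)!$ at that point is therefore circular without external input: either the general theorem identifying the little Weyl group of a symmetric space (as a spherical variety) with the restricted Weyl group, or, more economically, the paper's detour through $\gamma$, which makes the question evaporate. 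Once that identification is granted, the remainder of your part (1) and all of parts (2)--(3) are correct and essentially identical to the paper.
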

 
 \begin{proof} 
 
 1) This first assertion is proved in \cite{Muller-Rubenthaler-Schiffmann} (Prop. 2.16 and Lemme 3.7).
 
 2) We need now to use some technical results from the structure theory of commutative $PV$'s of parabolic type. For details see \cite{Muller-Rubenthaler-Schiffmann} and \cite{Rubenthaler-Schiffmann-1}. We need also results  concerning the symmetric space structure of the open $G$ orbit $\Omega^+$ in $V^+$, these can be found in \cite{Bopp-Rubenthaler-complexe}.  Let ${\go t}$ be a Cartan subalgebra of ${\go g}$, then ${\go t}$ is also a Cartan subalgebra of $\widetilde{\go g}$ (see the notation in Example \ref{ex-commutative-PV}) and let $\widetilde \Sigma$ (resp. $\Sigma$) be the root system of $(\widetilde{\go g}, {\go t})$ (resp. $({\go g}, {\go t})$). We choose an order on $\widetilde \Sigma$ such that the roots occurring in $V^+$ are positive. We know from Prop. 2.9. in \cite{Bopp-Rubenthaler-complexe} that the open $G$-orbit $\Omega^+=\{x\in V^+\,|\, \Delta_{0}(x)\neq0\}$ is a symmetric space $G/H$ where $H$ is the isotropy subgroup of a point $I^+\in \Omega^+$. The choice of $I^+$ can be made the following way. It is known that any maximal set of stronly orthogonal long roots occurring in  $V^+$ has $r+1= rk(G,V^+)$ elements. There is a canonical way to construct such a maximal set, called the "descent", see \cite{Muller-Rubenthaler-Schiffmann}, Th. 2.7. p.101.  If $\{\alpha_{0}, \alpha_{1},\dots, \alpha_{r}\}$ is such a maximal set of strongly orthogonal long roots, then the element $I^+=X_{\alpha_{0}}+X_{\alpha_{1}}+\dots+X_{\alpha_{r}}$ is generic (here as usual the $X_{\alpha_{i}}$'s are non zero root vectors). Let ${\go h}= Z_{\go{g}}(I^+)$  be the Lie algebra of $H$, and let ${\go q}$ be the   orthogonal complement of $\go{h}$ in $\go{g}$ with respect to the Killing form of $\widetilde{\go {g}}$. Let $H_{\alpha_{i}}\in \go{t}$ be the co-root of $\alpha_{i}$. Set $\go{a}=\sum_{i=0}^{r}\C H_{\alpha_{i}}$.  Then $\go{a}$ is a maximal abelian subspace of $\go{q}$ (\cite{Bopp-Rubenthaler-complexe}, Prop. 5.4) and the dual space $\go{a}^*$ can be identified with the space of restrictions of the fundamental characters $\lambda_{0},\lambda_{1},\dots,\lambda_{r}$ (\cite{Bopp-Rubenthaler-complexe}, Lemme 2.5).    Hence this definition of ${\go a}^*$ is coherent with the direct definition of $\go{a}^*$  given in section 3.2 in the general case  (${\go a}^*= \sum _{i=0}^r \C \lambda_{i}$).

 For $\lambda\in {\go t}^*$, we will denote by $\overline{\lambda}$ the restriction of $\lambda$ to ${\go a}$. Through  the "classical" Harish-Chandra isomorphism $\gamma$ for symmetric spaces (\cite{Heckmann-Schlichtkrull}, Part II, Theorem 4.3) the algebra ${\cal T}_{0}$ is isomorphic to $S({\go a})^{W_{R}}=\C[{\go a}^*]^{W_{R}}$, where $W_{R}$  is the Weyl group of the root  system $R$ of $({\go g},{\go a})$. This root system is known to be of type $A_{r}$ (the proof is the same as for Theorem 3.11 in \cite{Bopp-Rubenthaler-reel}). Hence $W_{R}$ is the symmetric group of $r+1$ variables and it acts by permutations on the $\overline{\alpha_{i}}$'s.  We will choose an order on $R$ such that $\overline{\Sigma^+}\subset R^+ $. As in  \cite{Muller-Rubenthaler-Schiffmann} and \cite{Rubenthaler-Schiffmann-1} we consider here relative invariants  $\Delta_{0}, \Delta_{1},\dots,\Delta_{r}$ with respect to the Borel subgroup defined by $\Sigma^-$. 
 Define $\rho=\frac{1}{2}\sum_{\beta\in R^-}\beta$. It is well known that for $D\in {\cal T}_{0}$ and $\lambda=\sum_{i=0}^r a_{i}\lambda_{i}\in {\go a}^*$,    $\gamma(D)(-\overline{\lambda}+\rho)$ is equal to the eigenvalue of  $D$ acting on $\Delta_{0}^{a_{0}}\dots\Delta_{r}^{a_{r}}$. In other words we have:
 $$\gamma(D)(-\overline{\lambda}+\rho)=b_{D}(\lambda).$$
 From \cite{Rubenthaler-Schiffmann-2}, Lemme 3.9 p. 155 we know that 
 $$\rho={d\over4}\sum_{i<j}(\overline{\alpha_i}-\overline{\alpha_j})={d\over4}\sum_{i=0}^r(r-2i)\overline{\alpha_i}$$
 and from  [ibid.], Lemme 3.8 p. 155 we also have:
 $$\overline{\lambda}=
a_0\overline{\alpha_0}+(a_0+a_1)\overline{\alpha_1}+\ldots+(a_0+\cdots+a_r)\overline{\alpha_r}.\renewcommand{\thefootnote}{\fnsymbol{footnote}}\footnote{The change of sign with respect to Lemme 3.8 in \cite{Rubenthaler-Schiffmann-2} is again due to the fact that we consider here characters of relative invariants instead of highest weights.} $$
Let us now make the following change of variables:
$$s_{i}=a_{0}+\dots+a_{i}, \text{ for }i=0,\dots,r.$$
As $b_{D_{\ell}}(\lambda)=b_{Y}(s_{0}+\ell,\dots,s_{r}+\ell)= c\prod_{i=0}^r(s_{i}+\ell+i\frac{d}{2})$ (see Example \ref{example-b_Y-commutative}) we obtain  
$$\begin{array}{rl}
\gamma(D_{\ell})(\overline {\lambda})&= b_{D_{\ell}}(-\lambda+\rho)=b_{D_{\ell}}(\sum_{i=0}^r -s_{i}\overline{\alpha_{i}}+ {d\over4}\sum_{i=0}^r(r-2i))\overline{\alpha_i})\\
\\
&=c\prod_{i=0}^r(-s_{i}+\frac{d}{4}r+\ell).
\end{array}$$

As expected  the polynomials $\gamma(D_{\ell}$) are symmetric in the $s_{i}$ variables (i.e. invariant under $W_{R}$). Moreover it is easy to prove that these polynomials, for $\ell=0,\dots,r$, are algebraically independent generators of the algebra of symmetric polynomials. This proves $2)$.

$3)$ As ${\cal T}={\cal T}_{0}[X,X^{-1}]$ (see Proposition \ref{prop-poly-XY-graduation}), and as, from $2)$, the elements of ${\cal T}_{0}$ are polynomials in $X,X^{-1},Y$ we obtain that ${\cal T}\subset \C[X,X^{-1},Y]$. The inverse inclusion is obvious.

$4)$ The inclusion  $ \C[X,Y,R_{1},\dots,R_{r}]\subset D(V^+)^{G'}= {\cal T}_{0}[X,Y]$ is obvious. Conversely, from Theorem \ref{th-generateurs-op-inv} we have ${\cal T}_{0}[X,Y]= \C[R_{0},R_{1},\dots,R_{r}][X,Y]$. As $ R_{0}=XY$ (see Remark \ref{rem-Capelli}), we have ${\cal T}_{0}[X,Y]\subset  \C[X,Y,R_{1},\dots,R_{r}]$.
 
 \end{proof}
 
 \begin{rem} According to Terras \cite{Terras}(II, p.208), the operators $D_{\ell}$ were first considered by Selberg on positive definite symmetric matrices. They appear also in Maass (\cite{Maass}),  in the same  context of positive definite symmetric matrices. In the setting  of symmetric cones, the analogue of assertion $2)$ of the preceding theorem can be found in \cite{Faraut-Koranyi-book}(Corollary XIV.1.6).
 \end{rem}
 
 \begin{rem} Note that for $PV$'s of commutative parabolic type we have $R_{r}=E$. In the special case where
$G\simeq SO(k)\times{ \bb C} ^*$ and $V^+\simeq { \bb C} ^k$, we have always  $r=1$, and assertion $4)$ of the 
preceding theorem yields
$${ D}({ \bb C} ^k)^{SO(k)}={ \bb C} [Q(x),Q(\partial),E]$$
where $Q(x)=X=\sum_{i=1}^kx_i^2$, $Q(\partial)=Y=\sum_{i=1}^k{\partial^2\over
\partial x_i^2}$.

   \noindent  This was   proved by S. Rallis and G. Schiffmann (\cite{Rallis-Schif} , Lemma
5.2. p. 112).

 \end{rem}
 \vskip 10pt

 %%%%%%%%%%%%%%%%%%%%%%%%%%%%%%%%%%%%%%%%%
 
  \vskip 20pt
\section{The structure of $D(V)^{G'}$}\hfill

%%%%%%%%%%%%%%%%%%%%%%%%%%%%%%%%%%%%%%
 \vskip 5pt
\subsection{Smith algebras over rings}\hfill
\vskip 5pt

As usual if $a,b$ are elements of an associative algebra we define $[a,b]=ab-ba$.

 \begin{definition}\label{def-Smith-algebra} 
 
 Let ${\bf A}$ be a commutative associative algebra over $\C$, with unit element $1$ and without zero divisors. Let $f, u\in {\bf A}[t]$ be two polynomials in one variable with coefficients in ${\bf A}$. Let $n\in \N^*$.
 
 $1)$ The  Smith algebra $S({\bf A}, f, n)$ is the associative algebra over ${\bf A}$ with generators $(x,y,e)$ subject to the relations $[e,x]=nx$, $[e,y]=-ny$, $[y,x]=f(e)$.

 $2)$ The  algebra $U({\bf A}, u, n)$ is the associative algebra over ${\bf A}$ with generators $(\tilde x, \tilde y, \tilde e)$ subject to the relations $[\tilde e,\tilde x]=n\tilde x$, $[\tilde e,\tilde y]=-n\tilde y$, $\tilde x\tilde y= u(\tilde e)$, $\tilde y\tilde x= u(\tilde e+n)$. 
  \end{definition}
  
  \begin{rem}
  
  1) The algebras $S(\C, f, n)$ were introduced and intensively studied by S. P. Smith (\cite{Smith}) who called them "algebras similar to ${\cal U}({\go{sl}}_{2})$", where ${\cal U}({\go{sl}}_{2})$ is the enveloping algebra of ${\go{sl}}_{2}$.  In fact they share   many interesting properties with   ${\cal U}({\go{sl}}_{2})$, in particular they have a very rich representation theory.

  2) One can prove, as in \cite{Smith}, that if the degree of $f$ is one and $n\neq 0$, and if the leading coefficient is invertible in ${\bf A}$, then $S({\bf A}, f, n)$ is isomorphic to the enveloping algebra ${\cal U}({\go{sl}}_{2}({\bf A}))$.
  \end{rem}
  
   Let ${\cal R}$ be a ring and let $\sigma \in \text{Aut} ({\cal R})$. Let us recall that a $\sigma$-derivation of ${\cal R}$ is an additive  map $\delta: {\cal R}\longrightarrow{\cal R} $ such  that $\delta(su)=s\delta(u)+\delta(s)\sigma(u)$. Given a $\sigma$-derivation $\delta$, the skew   polynomial    ring over ${\cal R}$ determined by $\sigma$
 and $\delta $ is the ring ${\cal R}[t, \sigma, \delta]:=\langle {\cal R}, t\rangle/\{st-t\sigma(s)-\delta (s)|s\in {\cal R}\}$, where $\langle {\cal R}, t\rangle$ stands for the ring freely generated by ${\cal R}$  and an element $t$ with the relations given by the ring structure on ${\cal R}$ (for details see \cite{macConnell-rob}, section 1.2, p.15 or \cite{Goodearl-Warfield} p.34).  %Similarly one defines the skew Laurent polynomials ring by $S[T,T^{-1},\sigma]:=\langle S,T,T^{-1}\rangle/\{sT=T\sigma(s)\}$.

  \begin{prop}\label{prop.Smith=poly-twistee} \hfill
  
  Let ${\go b}$ the $2$-dimensional Lie algebra over  ${\bf A}$, with basis $\{\varepsilon, \alpha\}$ and relation $[\varepsilon,\alpha]=n\alpha$. Let ${\cal U}({\go b})$ be  the enveloping algebra of ${\go b}$. Define an automorphism $\sigma$ of  ${\cal U}({\go b})$ by $\sigma(\alpha)=\alpha$ and $\sigma(\varepsilon)=\varepsilon-n$ and define also a $\sigma$-derivation $\delta$ of ${\cal U}({\go b})$ by $\delta(\alpha)=f(\varepsilon)$ and $\delta(\varepsilon)=0$. Then  $S({\bf A}, f, n)\simeq {\cal U}({\go b})[t,\sigma,\delta]$.   \end{prop}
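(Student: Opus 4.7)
The plan is to construct mutually inverse ${\bf A}$-algebra homomorphisms between $S({\bf A},f,n)$ and ${\cal U}({\go b})[t,\sigma,\delta]$ using the universal properties of both presentations. A preliminary step is to verify that $\sigma$ and $\delta$ are well defined on ${\cal U}({\go b})$, i.e. compatible with the defining relation $\varepsilon\alpha-\alpha\varepsilon=n\alpha$: for $\sigma$ this reduces to $[\varepsilon-n,\alpha]=n\alpha$, which is immediate; for $\delta$ the Leibniz rule $\delta(st)=s\delta(t)+\delta(s)\sigma(t)$ applied to $\varepsilon\alpha-\alpha\varepsilon$ yields $\varepsilon f(\varepsilon)-f(\varepsilon)(\varepsilon-n)=nf(\varepsilon)=\delta(n\alpha)$, as required.

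Next I would build $\Phi:{\cal U}({\go b})[t,\sigma,\delta]\to S({\bf A},f,n)$ in two stages. The universal property of ${\cal U}({\go b})$ produces a unique ${\bf A}$-algebra map $\pi:{\cal U}({\go b})\to S({\bf A},f,n)$ with $\pi(\varepsilon)=e$ and $\pi(\alpha)=x$, since the Smith relation $[e,x]=nx$ is exactly the needed compatibility. To extend $\pi$ to the skew polynomial ring, by the universal property of $S[T,\sigma,\delta]$ it suffices to exhibit an element $T_{0}\in S({\bf A},f,n)$ satisfying $\pi(s)T_{0}=T_{0}\pi(\sigma(s))+\pi(\delta(s))$ for every $s\in{\cal U}({\go b})$; by multiplicativity this reduces to the two identities on the generators $\varepsilon,\alpha$. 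Taking $T_{0}=y$, the relation for $\varepsilon$ becomes $ey=y(e-n)$, which is exactly the Smith relation $[e,y]=-ny$, and the relation for $\alpha$ becomes the content of $[y,x]=f(e)$ after rearranging.

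For the reverse direction I would define $\Psi:S({\bf A},f,n)\to{\cal U}({\go b})[t,\sigma,\delta]$ as the unique map sending $e\mapsto\varepsilon$, $x\mapsto\alpha$, $y\mapsto t$. This is well defined provided the three defining relations of $S({\bf A},f,n)$ hold in the target: $[\varepsilon,\alpha]=n\alpha$ is the Lie bracket in ${\go b}$; the relation $[\varepsilon,t]=-nt$ follows from $\varepsilon t=t\sigma(\varepsilon)+\delta(\varepsilon)=t(\varepsilon-n)$; and the bracket $[t,\alpha]$ is computed directly from $\alpha t=t\sigma(\alpha)+\delta(\alpha)=t\alpha+f(\varepsilon)$. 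The compositions $\Phi\circ\Psi$ and $\Psi\circ\Phi$ fix the respective generators and are therefore the identity, giving the isomorphism.

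The main obstacle is purely bookkeeping: the skew polynomial relation $sT-T\sigma(s)-\delta(s)=0$ imposes a fixed orientation, and one must carefully align signs so that it matches the Smith presentation (in particular the convention $[y,x]=f(e)$ rather than $[x,y]=f(e)$). Once the generators are correctly identified and the signs checked, both universal property invocations are routine and the isomorphism follows at once.
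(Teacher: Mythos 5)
Your overall strategy is the same as the paper's: identify ${\cal U}({\go b})[t,\sigma,\delta]$ by generators and relations and match them with the presentation of $S({\bf A},f,n)$ under $e\mapsto\varepsilon$, $x\mapsto\alpha$, $y\mapsto t$. You add two things the paper glosses over, and they are welcome: the verification that $\sigma$ and $\delta$, prescribed on generators, are compatible with the defining relation $\varepsilon\alpha-\alpha\varepsilon=n\alpha$ of ${\cal U}({\go b})$ (your Leibniz computation is correct), and the explicit invocation of the universal property of the Ore extension to produce the two mutually inverse maps.

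There is, however, a sign in the $\alpha$-relation that you flag but never actually resolve, and it is load-bearing. With the stated conventions ($\delta(\alpha)=f(\varepsilon)$ and the Ore relation $sT=T\sigma(s)+\delta(s)$), one gets $\alpha t-t\alpha=f(\varepsilon)$ in ${\cal U}({\go b})[t,\sigma,\delta]$, which under $x\mapsto\alpha$, $y\mapsto t$ corresponds to $xy-yx=f(e)$, whereas the Smith relation is $yx-xy=f(e)$. You write that "the relation for $\alpha$ becomes the content of $[y,x]=f(e)$ after rearranging," but rearranging gives $[x,y]=f(e)=-[y,x]$, so with $T_{0}=y$ the compatibility condition $\pi(\alpha)T_{0}=T_{0}\pi(\sigma(\alpha))+\pi(\delta(\alpha))$ fails and the map $\Phi$ you describe does not exist. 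One fix is $x\mapsto-\alpha$ (equivalently, redefine $\delta(\alpha)=-f(\varepsilon)$, or use the opposite Ore convention $Ts=\sigma(s)T+\delta(s)$). In fairness, the paper's own one-line proof makes exactly the same slip; but since you explicitly raise the sign issue and then claim it is merely routine, you should carry the check through to see that it does not close as written.
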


\begin{proof} The proof is almost the same as the one  given by S. P. Smith (\cite{Smith}, Prop. 1.2.). 
%It suffices to remark that the algebra ${\cal U}({\go b})[t,\sigma,\delta]$ is an algebra over ${\bf A}$ with generators  $\varepsilon, \alpha,t$ subject to the relations 
 %\begin{align*} 
%&[\varepsilon,\alpha]=n\alpha, \\
%&\alpha t=t\sigma(\alpha)+\delta(\alpha) \text{ which is equivalent to }\alpha t=t\alpha+f(\varepsilon),\\
%&\varepsilon t=t\sigma(\varepsilon)+\delta(\varepsilon) \text{ which is equivalent to }\varepsilon t=t(\varepsilon-n)=t\varepsilon-nt.
 %\end{align*} 
 The isomorphism $S({\bf A}, f, n)\simeq {\cal U}({\go b})[t,\sigma,\delta]$ is given by $e\longmapsto \varepsilon$, $x\longmapsto \alpha$ and $y\longmapsto t$.

\end{proof}

\begin{cor}\label{cor.smith-basis}\hfill 

  \noindent  $S({\bf A}, f, n) $ is a noetherian domain with ${\bf A}$-basis $\{y^{i}x^{j}e^{k},i,j,k\in{\bb N}\}$  {  \rm (}or any similar family of ordered monomials obtained by permutation of the elements $(y,x, e)${\rm )}.
 
\end{cor}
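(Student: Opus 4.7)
The plan is to read off both assertions directly from the isomorphism furnished by Proposition~\ref{prop.Smith=poly-twistee}, namely $S({\bf A},f,n)\simeq {\cal U}({\go b})[t,\sigma,\delta]$, by chaining together two applications of the standard theory of Ore extensions (skew polynomial rings). The first step is to produce a PBW basis for the intermediate ring ${\cal U}({\go b})$: since ${\go b}$ is a free ${\bf A}$-module of rank $2$ with ordered basis $(\alpha,\varepsilon)$ and relation $[\varepsilon,\alpha]=n\alpha$, the PBW theorem for Lie algebras that are free over their base ring gives $\{\alpha^{j}\varepsilon^{k}:j,k\in\bb N\}$ as a free ${\bf A}$-basis of ${\cal U}({\go b})$. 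Equivalently, ${\cal U}({\go b})$ is itself a skew polynomial ring ${\bf A}[\varepsilon][\alpha,\sigma_{0},0]$ with $\sigma_{0}(\varepsilon)=\varepsilon+n$, so its PBW basis is an instance of the next step.

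The second step is the basic structural fact for skew polynomial extensions: if $R$ is any ring, $\sigma\in\mathrm{Aut}(R)$ and $\delta$ a $\sigma$-derivation, then $R[t,\sigma,\delta]$ is free as a left (and as a right) $R$-module on the powers $\{t^{i}:i\in\bb N\}$ (see \cite{macConnell-rob}, \S1.2). Applying this with $R={\cal U}({\go b})$ and combining with the PBW basis of $R$ yields $\{t^{i}\alpha^{j}\varepsilon^{k}:i,j,k\in\bb N\}$ as a free ${\bf A}$-basis of ${\cal U}({\go b})[t,\sigma,\delta]$. Transporting along the isomorphism $y\leftrightarrow t$, $x\leftrightarrow \alpha$, $e\leftrightarrow \varepsilon$ of Proposition~\ref{prop.Smith=poly-twistee} produces the basis $\{y^{i}x^{j}e^{k}\}$. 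The basis assertion for any other ordering of the three generators follows formally: using the three defining commutation relations $[e,x]=nx$, $[e,y]=-ny$, $[y,x]=f(e)$, one may rewrite any word in $x,y,e$ modulo lower-degree terms as an ordered monomial in the chosen order, so that spanning is clear and linear independence follows by comparing with the already-established basis through the ${\bf A}$-linear straightening map.

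For the domain property, I would use the classical fact that if $R$ is a domain and $\sigma$ is injective, then $R[t,\sigma,\delta]$ is again a domain: the leading-$t$ coefficient of a product $(\sum r_{i}t^{i})(\sum s_{j}t^{j})$ is $r_{m}\sigma^{m}(s_{n})$, which is nonzero in $R$. Since ${\bf A}$ is a domain by hypothesis, ${\bf A}[\varepsilon]$ is a domain, then ${\cal U}({\go b})={\bf A}[\varepsilon][\alpha,\sigma_{0},0]$ is a domain (the automorphism $\sigma_{0}$ is injective), and finally $S({\bf A},f,n)={\cal U}({\go b})[t,\sigma,\delta]$ is a domain because $\sigma$ is an automorphism.

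For noetherianity (under the standing assumption, relevant in all applications of this paper, that ${\bf A}$ is noetherian—in the sequel ${\bf A}$ will be a polynomial algebra), I would invoke Theorem~1.2.9 of \cite{macConnell-rob}: $R$ noetherian implies $R[t,\sigma,\delta]$ noetherian. Applied twice, starting from the noetherian commutative ring ${\bf A}[\varepsilon]$ (Hilbert), this yields noetherianity first of ${\cal U}({\go b})$ and then of $S({\bf A},f,n)$. No step presents a serious obstacle; the only point requiring mild care is the PBW theorem over the commutative base ${\bf A}$, but this is classical for Lie algebras which are free over the base ring and, as noted above, can in fact be avoided entirely by presenting ${\cal U}({\go b})$ itself as an Ore extension of ${\bf A}[\varepsilon]$.
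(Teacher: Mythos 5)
Your proposal is correct and follows essentially the same route as the paper's: both arguments pass through the isomorphism $S({\bf A},f,n)\simeq {\cal U}({\go b})[t,\sigma,\delta]$ from Proposition~\ref{prop.Smith=poly-twistee}, combine a PBW basis for ${\cal U}({\go b})$ with the freeness of an Ore extension over its coefficient ring, and invoke Theorem~1.2.9 of \cite{macConnell-rob} for the noetherian domain property. Two small differences are worth noting. First, for the alternative orderings of the monomials, the paper uses a slick trick: it replaces the Borel-type subalgebra ${\go b}=\langle\varepsilon,\alpha\rangle$ (mapping to $e,x$) by the subalgebra ${\go b}_-=\langle\varepsilon,\alpha_-\rangle$ mapping to $e,y$, and reads off the other orderings from the analogous decomposition $S({\bf A},f,n)\simeq{\cal U}({\go b}_-)[s,\sigma',\delta']$ together with the two-sided freeness $\oplus_\ell{\cal U}({\go b}_-)s^\ell=\oplus_\ell s^\ell{\cal U}({\go b}_-)$. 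Your straightening argument—rewriting a word into the chosen order modulo lower filtration degree and noting that the change-of-basis map is triangular with respect to that filtration—also works, but is slightly longer to make fully rigorous; the paper's symmetric trick avoids the filtration bookkeeping entirely. Second, you are right to flag that the noetherianity conclusion genuinely requires ${\bf A}$ to be noetherian: the paper's Definition~\ref{def-Smith-algebra} only assumes ${\bf A}$ is a commutative domain with unit, yet the paper's proof begins with the assertion that ${\cal U}({\go b})$ is a noetherian domain, which presupposes ${\bf A}$ noetherian. That hypothesis is indeed satisfied in every application in the paper (where ${\bf A}={\cal Z}({\cal T})$ is a polynomial algebra by Theorem~\ref{th-Z(T)}), but strictly speaking it should be added to the statement; your explicit acknowledgment of this point is a genuine improvement in precision over the paper's own formulation.
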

\begin{proof}(compare with  \cite{Smith}, proof of corollary 1.3 p. 288). We know from  \cite{macConnell-rob} Th.1.2.9,   that as  ${\cal U}({\go b})$ is a  noetherian domain,  so is $S({\bf A}, f, n)\simeq {\cal U}({\go b})[t,\sigma,\delta]$. Since 
\begin{align*} {\cal U}({\go b})[t,\sigma,\delta]&=  {\cal U}({\go b})\oplus  {\cal U}({\go b})t\oplus   {\cal U}({\go b})t^2\oplus  {\cal U}({\go b})t^3\oplus \dots\oplus {\cal U}({\go b})t^\ell\oplus \dots\\
&=  {\cal U}({\go b})\oplus  t{\cal U}({\go b}) \oplus   t^2{\cal U}({\go b}) \oplus  t^3{\cal U}({\go b}) \oplus \dots\oplus t^\ell{\cal U}({\go b}) \oplus \dots
\end{align*} 
(direct sums of ${\bf A}$-modules) and since the Poincar\'e-Birkhoff-Witt Theorem is still true for enveloping algebras of Lie algebras which are free over rings (see \cite{Bourbaki-1}),   the ordered monomials in $(y,x,e)$   beginning  or ending with $y$ form a basis of the algebra $S({\bf A}, f, n)$. To obtain the basis  $\{e^iy^jx^k\}$  or $\{ x^ky^je^i\}$ it suffices to replace the algebra ${\go b}$ by the algebra ${\go b}_{-}$  which is generated by $e$ and $y$.

\end{proof}

\begin{rem}\label{rem.graduation}The adjoint action of $e$ $(u\longmapsto [e,u])$ on $ S({\bf A}, f, n)$ is semi-simple and gives a decomposition of $S({\bf A}, f, n)$ into weight spaces:
$$S({\bf A}, f, n)=\oplus_{\nu\in {\bb Z}}\,S({\bf A}, f, n)^\nu$$
where $S({\bf A}, f, n)^{\nu}=\{u\in S({\bf A}, f, n), [e,u]={\nu}n u\}$. As $[e,x^{j}y^{i} e^{k}]=n(j-i)y^{i}x^{j}e^{k}$, we obtain, using Corollary \ref{cor.smith-basis}, that  the ordered monomials of the form $ x^{i}y^{i}e^{k}$ form an  ${\bf A}$-basis for $S({\bf A}, f, n)^{0}$. Moreover as $yx=xy+f(e)$, it is easy to sea that $S({\bf A}, f, n)^{0}={\bf A}[xy,e]={\bf A}[yx,e]$, where ${\bf A}[xy,e]$ (resp. ${\bf A}[yx,e]$) denotes the ${\bf A}$-subalgebra generated by $xy$ (resp. $yx$) and $e$.
\end{rem}
\vskip 10pt
%%%%%%%%%%%%%

The proof of the following Lemma is straightforward.
\begin{lemma}\label{lemma.f=u-u}\hfill

 Let $n\in \N^*$ and let $f\in {\bf A}[t]$. There exists an element $u\in {\bf A}[t]$, which is unique up to 
addition of  an element of ${\bf A}$, such that 
 $$f(t)=u(t+n)-u(t)\eqno (5-1-1)$$
\end{lemma}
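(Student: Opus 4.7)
The lemma asks for discrete antidifferentiation: both existence and uniqueness modulo an additive constant for the equation $\Delta_n(u)=f$, where $\Delta_n:{\bf A}[t]\to {\bf A}[t]$ is the ${\bf A}$-linear shift-difference operator $u(t)\mapsto u(t+n)-u(t)$. The plan is to treat the two assertions separately.

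For existence, I would proceed by induction on $d=\deg f$. The key observation is that
$$\Delta_n(t^{d+1}) = (t+n)^{d+1} - t^{d+1} = (d+1)n\,t^{d} + (\text{terms of degree} < d).$$
Because ${\bf A}$ is a unital $\C$-algebra, the positive integer $(d+1)n$ is invertible in ${\bf A}$; so if $f$ has leading coefficient $a_d\in {\bf A}$, the polynomial $f - \Delta_n\bigl(\tfrac{a_d}{(d+1)n}\, t^{d+1}\bigr)$ has degree strictly less than $d$, and the induction hypothesis applies. The base case $d=-\infty$ (i.e., $f=0$) is trivial, and the use of $n\neq 0$ enters precisely in the invertibility step.

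For uniqueness it suffices to identify $\ker \Delta_n$ with ${\bf A}$. If $v\in {\bf A}[t]$ satisfies $v(t+n)=v(t)$, then $v(t)-v(0)$ vanishes at every point $kn$ with $k\in \N$. Since ${\bf A}$ has no zero divisors, ${\bf A}[t]$ is a domain, so a polynomial with infinitely many roots in ${\bf A}$ is identically zero. Hence $v\in {\bf A}$, and any two solutions of $\Delta_n(u)=f$ differ by a constant, as claimed.

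I do not foresee any real obstacle: the whole argument is elementary finite-difference calculus, and the two standing hypotheses on ${\bf A}$ — being a $\C$-algebra (so that positive integers are invertible) and an integral domain (so that polynomial identities can be tested on infinite subsets) — are exactly what is needed to make the standard proof go through verbatim over the ring ${\bf A}$.
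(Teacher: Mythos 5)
Your proof is correct and supplies exactly the routine finite‑difference argument that the paper omits (the paper merely declares the lemma ``straightforward'' and gives no proof). You also correctly isolate where each hypothesis on ${\bf A}$ is used: the $\C$‑algebra structure makes $(d+1)n$ invertible in the existence induction, and the absence of zero divisors makes ${\bf A}[t]$ a domain so that the periodicity $v(t+n)=v(t)$ forces $v$ to be constant.
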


%\begin{proof} Consider the ${\bf A}$-linear map $\Delta: {\bf A}[t]\longrightarrow {\bf A}[t]$ defined by $(\Delta P)(t)=P(t+n)-P(t)$. Define  $P_{1}[t]=\frac{1}{n}t$, then $(\Delta P_{1})(t)=1$ and therefore the elements of ${\bf A}$ belong to the image of $\Delta$. Suppose that the space ${\bf A}[t]^{k}$ of polynomials in ${\bf A}[t]$ of degree less than $k$ belong to the image of $\Delta$. Let $P_{k+2}(t)=t^{k+2}$, then $(\Delta \frac{1}{n(k+2)} P_{k+2})(t)=t^{k+1} \mod {\bf A} [t]^{k}$ and therefore we have proved by induction that $\Delta$ is surjective. Moreover we see that $\ker(\Delta)= {\bf A}$.\end{proof}

 \vskip 5pt

%The following result is  similar to Proposition 1.5 in \cite{Smith}, one has just to be careful when working over an arbitrary ring, rather than ${\bb C}$. It shows that, analogously to the enveloping algebra of ${\go sl}_{2}$, there is a Casimir-like element which generates the center of  $S({\bf R},f,n)$ over ${\bf R}$.

%%%%%%%%%%%%%%%%%%
\begin{prop}\label{prop.casimir}{\rm (compare with \cite{Smith}, Prop. 1.5)}\hfill

 Let $u$ be as in the preceding Lemma. Define  
$$\Omega_{1}=xy-u(e).$$
 Then  the center of $S({\bf A}, f, n)$ is ${\bf A}[\Omega_{1}]$  which is isomorphic to the polynomial algebra ${\bf A}[t]$.
 \end{prop}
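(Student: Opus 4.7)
The identity $\Omega_{2}=2\Omega_{1}$ is immediate: from $[y,x]=f(e)=u(e+n)-u(e)$ one gets $yx=xy+u(e+n)-u(e)$, so $xy+yx=2xy+u(e+n)-u(e)$ and $\Omega_{2}=2xy-2u(e)=2\Omega_{1}$.

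To see that $\Omega_{1}$ is central it suffices to check that it commutes with the three generators $x,y,e$. The relations $[e,x]=nx$ and $[e,y]=-ny$ give $ex=x(e+n)$ and $ey=y(e-n)$, whence $p(e)x=xp(e+n)$ and $p(e)y=yp(e-n)$ for every polynomial $p\in{\bf A}[t]$. In particular the functional equation $f(t)=u(t+n)-u(t)$ yields $[u(e),x]=xf(e)$ and $[u(e),y]=-yf(e-n)$. On the other hand $[xy,e]=0$, $[xy,x]=x[y,x]=xf(e)$, and $[xy,y]=-f(e)y=-yf(e-n)$, so the three commutators $[\Omega_{1},x]$, $[\Omega_{1},y]$, $[\Omega_{1},e]$ all vanish.

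The substantial work is to identify the center and to prove the algebraic freeness of $\Omega_{1}$. The $\ad(e)$-grading of Remark~\ref{rem.graduation} forces any central element into the weight-zero component $S({\bf A},f,n)^{0}$, which by Corollary~\ref{cor.smith-basis} and Remark~\ref{rem.graduation} admits the ${\bf A}$-basis $\{x^{i}y^{i}e^{k}:i,k\geq 0\}$. Using $yx=xy+f(e)$ together with the commutation $xf(e)=f(e+n)x$, an induction on $i$ gives $x^{i}y^{i}=(xy)^{i}+\sum_{j<i}(xy)^{j}g_{i,j}(e)$ with $g_{i,j}\in{\bf A}[e]$; this triangular relation shows that the family $\{(xy)^{i}e^{k}\}$, and hence $\{\Omega_{1}^{i}e^{k}\}$, is also an ${\bf A}$-basis of $S^{0}$. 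In particular $\Omega_{1}$ and $e$ are algebraically independent over ${\bf A}$, whence ${\bf A}[\Omega_{1}]\simeq{\bf A}[t]$.  Now write a central element as $c=\sum_{k\geq 0}p_{k}(\Omega_{1})e^{k}$. Since $\Omega_{1}$ commutes with both $x$ and $e$, the relation $[c,x]=0$ becomes $x\sum_{k}p_{k}(\Omega_{1})\bigl((e+n)^{k}-e^{k}\bigr)=0$. Because $S({\bf A},f,n)$ is a domain, $\{\Omega_{1}^{i}e^{k}\}$ is a basis, and $(e+n)^{k}-e^{k}$ has leading term $kne^{k-1}$ with $n$ invertible in ${\bf A}$, extracting the coefficient of the highest power of $e$ that actually occurs forces $p_{k}=0$ for every $k\geq 1$. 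Thus $c=p_{0}(\Omega_{1})\in{\bf A}[\Omega_{1}]$ and the center equals ${\bf A}[\Omega_{1}]$. The main obstacle is the triangularity step establishing that $\{(xy)^{i}e^{k}\}$ is a basis of $S^{0}$; once it is in hand, everything else reduces to elementary bookkeeping around the commutation relations.
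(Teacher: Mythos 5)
Your argument is correct and follows the paper's own proof in all essentials: the same centrality computation via $p(e)x=xp(e+n)$ and $p(e)y=yp(e-n)$, the same identification of $S^{0}={\bf A}[xy,e]={\bf A}[\Omega_{1},e]$, and the same freeness argument using the leading PBW term of $(xy)^{i}$ (resp. $\Omega_{1}^{i}$). The only cosmetic difference is at the end: you expand a central element in powers of $e$ with coefficients in ${\bf A}[\Omega_{1}]$ and compare top degrees in $e$, whereas the paper expands in powers of $\Omega_{1}$ with coefficients in ${\bf A}[e]$ and then invokes Lemma~\ref{lemma.f=u-u} to conclude $c_{i}(e+n)-c_{i}(e)=0\Rightarrow c_{i}\in{\bf A}$; both routes are trivially equivalent.
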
 
 
 \begin{proof}

  Let us now prove that $\Omega_{1}$ is central. Obviously  $\Omega_{1}$  commutes with $e$.

   From the   defining relations  of $ S({\bf A}, f, n)$ we have  $ex=x(e+n)$ and therefore, for any $k\in {\bb N}$, $e^kx=x(e+n)^k$.
 
 This implies of course that for any polynomial $P\in {\bf A}[t]$ we have
 $$P(e)x=xP(e+n) \text{ or } P(e-n)x=xP(e). \eqno (5-1-2)$$
 Similarly one proves that
 $$P(e)y=yP(e-n) \text{ or }P(e+n)y=yP(e). \eqno(5-1-3)$$
 Let us  show that $\Omega_{1}$ commutes with $x$. Using Lemma \ref{lemma.f=u-u} and $(5-1-2)$ we obtain:
 \begin{align*}
 x\Omega_{1}&=x(xy-u(e))=x^2y-xu(e)=x(yx-f(e))-xu(e)\\
 &=x(yx-u(e+n)+u(e))-xu(e)=xyx-xu(e+n) = xyx-u(e)x  \\
 &=\Omega_{1}x.
  \end{align*}
  A similar calculation using $(5-1-3)$ shows that $\Omega_{1}$ commutes also with $y$. Hence $\Omega_{1}$ belongs to the center of $S({\bf A}, f, n)$.
  
  Let now $z$ be a central element of $ S({\bf A}, f, n) $. Then $z\in S({\bf A}, f, n)^0$. We have   $ S({\bf A}, f, n)^0 = {\bf A}[xy,e]={\bf A}[\Omega_{1},e]$, and hence $z$ can be written as follows:
  $$z=\sum c_{i}(e)\Omega_{1}^{i} \qquad \text { (finite sum) }$$
  where $c_{i}(e)\in {\bf A}[e]$.
  
  We have:
  \begin{align*}
  0&=[z,x]=[\sum c_{i}(e)\Omega_{1}^{i},x]=\sum [c_{i}(e),x]\Omega_{1}^{i}\\
  &=\sum (c_{i}(e)x-xc_{i}(e))\Omega_{1}^i= \sum x(c_{i}(e+n)-c_{i}(e))\Omega_{1}^i \text{ (using  $(5-1-2))$ }\\
  &=x(\sum (c_{i}(e+n)-c_{i}(e))\Omega_{1}^i) 
 \end{align*}
 As the algebra   $S({\bf A}, f, n)$ has no zero divisors  we get:
 $$\sum (c_{i}(e+n)-c_{i}(e))\Omega_{1}^i=0$$

  As $\Omega_{1}=xy-u(e)$, we have 
  $\Omega_{1}^i=x^iy^i \text {  modulo  monomials of the form   } e^kx^py^p \text { with } p<i.$
 Then  from Corollary \ref{cor.smith-basis} above we obtain $c_{i}(e+n)-c_{i}(e)=0 , \text{  for all } i$. As the elements $e^k$ are free over ${\bf A}$ (Corollary \ref{cor.smith-basis}) we obtain from Lemma \ref{lemma.f=u-u} that $c_{i}\in {\bf  A}$, for all $i$.   
 
 \end{proof}
 
 \begin{rem}\label{rem-U=quotient} Conversely let us start with  $u\in {\bf A}[t]$.  Define $f\in {\bf A}[t]$  by  $f(t)=u(t+n)-u(t)$. Then, from the definitions we have:
$$U({\bf A},u,n)= S({\bf A}, f, n)/(  x  y-u(  e))=S({\bf A}, f, n)/(\Omega_{1})$$
where $(  x  y-u(  e))=(\Omega_{1})$ is the   ideal  generated by $   x  y-u(  e) =\Omega_{1}$. Again, as for $S({\bf A}, f, n)$,  the adjoint action of $\tilde{e}$ gives a decomposition of $U({\bf A},u,n)$ into weight spaces:
$$U({\bf A},u,n)=\oplus_{\nu\in {\bb Z}}U({\bf A},u,n)^{\nu}\eqno (5-1-4)$$
where $U({\bf A},u,n)^{\nu}=\{\tilde{v}\in U({\bf A},u,n), [\tilde{e},\tilde{v}]={\nu}n \tilde{v}\}$.
\end{rem} 
\vskip 5pt
%%%%%%%%%%%%%
\begin{prop}\label{prop.U-contient-pol}\hfill

Let $u\in {\bf A}[t]$ and $s\in {\bb N}$. The  ${\bf A}$-linear mappings
$$\begin{array}{rclcrcl}
\varphi: {\bf A}[t]& \longrightarrow  &U({\bf A},u,n)&\quad &\psi: {\bf A}[t]& \longrightarrow&  U({\bf A},u,n)\\
P &\longmapsto  &\varphi(P)=\tilde{x}^sP(\tilde{e})&\quad&P &\longmapsto& \psi(P)=\tilde{y}^sP(\tilde{e})
\end{array}
$$

are injective   {\rm(}in particular the subalgebra ${\bf A}[\tilde{e}]\subset U({\bf A},u,n)$ generated by $\tilde{e}$ is a polynomial algebra{\rm)}.
 
\end{prop}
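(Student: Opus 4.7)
My plan is to identify the full ${\bf A}$-module structure of $U({\bf A},u,n)$ by exhibiting the explicit ${\bf A}$-basis
$${\cal B}=\{\tilde{e}^k\}_{k\geq 0}\,\cup\,\{\tilde{x}^s\tilde{e}^k\}_{s\geq 1,\,k\geq 0}\,\cup\,\{\tilde{y}^s\tilde{e}^k\}_{s\geq 1,\,k\geq 0}.$$
Once this is established, both injectivity statements are immediate: if $P(t)=\sum_k a_k t^k\in{\bf A}[t]$ then $\varphi(P)=\sum_k a_k\tilde{x}^s\tilde{e}^k$ is a combination of the basis vectors $\{\tilde{x}^s\tilde{e}^k\}_{k\geq 0}\subset {\cal B}$ (for $s\geq 1$) or of $\{\tilde{e}^k\}_{k\geq 0}\subset {\cal B}$ (for $s=0$), and hence vanishes only when all $a_k=0$; the argument for $\psi$ is identical, and the parenthetical assertion that ${\bf A}[\tilde{e}]$ is a polynomial algebra is precisely the case $s=0$.

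To establish the basis, I would filter $S=S({\bf A},f,n)$ by total degree in $x$ and $y$ (setting $\deg x=\deg y=1$, $\deg e=0$, and $\deg a=0$ for $a\in{\bf A}$). Using the ${\bf A}$-basis $\{x^iy^je^k\}$ of Corollary \ref{cor.smith-basis}, and the fact that the defining relations $[e,x]=nx$, $[e,y]=-ny$, $[y,x]=f(e)$ all have right-hand sides of order zero in the $x,y$-grading, one gets a well-defined increasing filtration $F_\bullet$ whose associated graded is the commutative polynomial algebra
$${\rm gr}(S)\cong {\bf A}[\overline{x},\overline{y},\overline{e}].$$

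The key step is to compute ${\rm gr}$ of the two-sided ideal $(\Omega_1)$, where $\Omega_1=xy-u(e)$. Because $\Omega_1$ is central (Proposition \ref{prop.casimir}), every element of $(\Omega_1)$ has the form $\Omega_1\cdot Q$ with $Q\in S$. If $Q\in F_p\setminus F_{p-1}$ has principal symbol $\overline{Q}\in {\rm gr}_p(S)$, then $xy\cdot Q\in F_{p+2}$ while $u(e)Q\in F_p$ is of strictly lower degree, so the symbol of $\Omega_1 Q$ in ${\rm gr}_{p+2}(S)$ equals $\overline{x}\,\overline{y}\,\overline{Q}$; since ${\rm gr}(S)$ has no zero divisors this symbol is nonzero. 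Conversely any element of the principal ideal $(\overline{x}\,\overline{y})\subset{\rm gr}(S)$ is visibly realized this way. Hence
$${\rm gr}\bigl((\Omega_1)\bigr)=(\overline{x}\,\overline{y})\cdot{\rm gr}(S),$$
and passing to quotients with the induced filtration one obtains
$${\rm gr}\bigl(U({\bf A},u,n)\bigr)\cong {\bf A}[\overline{x},\overline{y},\overline{e}]/(\overline{x}\,\overline{y}),$$
which is manifestly a free ${\bf A}$-module with basis the images of ${\cal B}$.

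A standard PBW-lifting argument---pick any set-theoretic lift of each graded basis vector, then verify linear independence and spanning by induction on filtration degree using the nonvanishing of symbols---transfers this basis from ${\rm gr}(U)$ to $U({\bf A},u,n)$ itself, yielding ${\cal B}$ as an ${\bf A}$-basis of $U({\bf A},u,n)$. The only subtle point is the computation of the symbol of $\Omega_1 Q$, which depends on ${\rm gr}(S)$ being an integral domain; working over an arbitrary ${\bf A}$ with $1$ and no zero divisors rather than over $\C$ introduces no additional difficulty, because the PBW basis of Corollary \ref{cor.smith-basis} is already valid at that level of generality.
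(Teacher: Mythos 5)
The filtration strategy is a genuinely different and attractive route --- it would establish the full ${\bf A}$-basis of $U({\bf A},u,n)$ at once and thereby also prove Corollary \ref{cor.U-basis}, which the paper derives afterwards from this proposition. But there is a real error at the identification of the associated graded. With $\deg x=\deg y=1$ and $\deg e=0$, the relation $[e,x]=nx$ has right-hand side $nx$ of the \emph{same} filtration degree $1$ as $ex$ and $xe$, not strictly lower; consequently $\overline{e}\,\overline{x}-\overline{x}\,\overline{e}=n\overline{x}\neq 0$ in ${\rm gr}(S)$ (and likewise for $y$), so ${\rm gr}(S)$ is \emph{not} the commutative polynomial ring ${\bf A}[\overline{x},\overline{y},\overline{e}]$. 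It is an iterated Ore extension in which $\overline{e}$ fails to commute with $\overline{x}$ and $\overline{y}$; only $\overline{x}$ and $\overline{y}$ commute (since $[y,x]=f(e)$ does drop degree). Your conclusion that ${\rm gr}(U)$ is ``manifestly'' free with basis the images of ${\cal B}$ therefore lacks justification as written, since it was read off from a commutative quotient.

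The argument is salvageable. One can check directly that ${\rm gr}(S)$ is still a domain, that $\overline{x}\,\overline{y}$ is central in it (since $[\overline{e},\overline{x}\,\overline{y}]=n\overline{x}\,\overline{y}-n\overline{x}\,\overline{y}=0$ and $\overline{x},\overline{y}$ commute), and that $\overline{x}\,\overline{y}\cdot{\rm gr}(S)$ has ${\bf A}$-basis $\{\overline{x}^i\overline{y}^j\overline{e}^k : i\geq 1,\,j\geq 1\}$; after that your PBW-lifting step goes through. Alternatively one can weight the filtration with $\deg e=1$ and $\deg x=\deg y=D$ for $2D>\deg u$, which does make ${\rm gr}(S)$ commutative with $\sigma(\Omega_1)=\overline{x}\,\overline{y}$. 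For comparison, the paper's own proof stays inside $S({\bf A},f,n)$: starting from $x^sP(e)=\alpha\Omega_1$ it expands $\alpha$ in the basis $\{e^kx^\ell y^m\}$, picks the maximal $\ell_0$ with a nonzero coefficient, and observes that the monomial $e^{k_0}x^{\ell_0+1}y^{m_0+1}$ appears on one side of the resulting identity but not the other. This is the same leading-term principle as your symbol calculation, carried out concretely in the PBW basis and sidestepping the noncommutativity of ${\rm gr}(S)$; your approach, once corrected, buys the full basis of $U({\bf A},u,n)$ in a single step.
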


\begin{proof} 

  \noindent Define $f(t)=u(t+n)-u(t)$.
Every element of $S({\bf A}, f, n)$ can be written uniquely in  the form
 $$\sum a_{k,\ell,m}e^kx^\ell y^m \qquad (a_{k,\ell,m}\in {\bf A})$$
  (Corollary \ref{cor.smith-basis}). Therefore, from Remark \ref{rem-U=quotient}, every element in $U({\bf A},u,n)$ can be written          in  the form:
   $$\sum a_{k,\ell,m}\tilde{e}^k\tilde{x}^\ell \tilde{y}^m  \qquad (a_{k,\ell,m}\in {\bf A}).$$
  Let $P(t)=\sum_{i=0}^pa_{i}t^i$, $(a_{i}\in {\bf A})$ be a polynomial  such that $\tilde{x}^sP(\tilde{e})=0$  ({\it i.e.}      
    $ P\in \ker \varphi$). As $U({\bf A},u,n)=  S({\bf A}, f, n)/(\Omega_{1})$, we see that 
   there exists $\alpha\in S({\bf A}, f, n)$ such that 
  $$x^s\sum_{i=0}^p a_{i}e^i=\alpha \Omega_{1}=\alpha(xy-u(e)).$$
  If $\alpha=\sum a_{k,\ell,m}e^kx^\ell y^m $,  using the fact that $\Omega_{1}=xy-u(e)$ is central and relation $(5-1-2)$ we get:
 \begin{align*}\ x^s\sum_{i=0}^p a_{i}e^i &= (\sum_{k,\ell,m} a_{k,\ell,m}e^kx^\ell y^m) (xy-u(e))= \sum_{k,\ell,m} a_{k,\ell,m}e^kx^\ell(xy-u(e)) y^m \\
 &= \sum_{k,\ell,m} a_{k,\ell,m}e^kx^{\ell+1} y^{m+1}- \sum_{k,\ell,m} a_{k,\ell,m}e^kx^\ell u(e) y^m\\
 &=  \sum_{k,\ell,m} a_{k,\ell,m}e^kx^{\ell+1} y^{m+1}- \sum_{k,\ell,m} a_{k,\ell,m}e^ku(e-\ell n)x^\ell   y^m  \quad  (*)
 \end{align*}
 Suppose now that $\alpha\neq 0$, then one can define 
 $$\ell_{0}=\max\{\ell \in {\bb N}, \exists k,m , a_{k,\ell,m}\neq 0\}.$$
 Let $k_{0},m_{0}$ be such that $a_{k_{0},\ell_{0},m_{0} }\neq0$.   From $(*)$ above we get
 $$x^s\sum_{i=0}^p a_{i}e^i +\sum_{k,\ell,m} a_{k,\ell,m}e^ku(e-\ell n)x^\ell   y^m =\sum_{k,\ell,m} a_{k,\ell,m}e^kx^{\ell+1} y^{m+1}.$$
 Using again $(5-1-2)$ we obtain:
 $$ \sum_{i=0}^p a_{i}(e-ns)^ix^s +\sum_{k,\ell,m} a_{k,\ell,m}e^ku(e-\ell n)x^\ell   y^m =\sum_{k,\ell,m} a_{k,\ell,m}e^kx^{\ell+1} y^{m+1}.$$
 The left hand side of the preceding equality does not contain the monomial $e^{k_{0}}x^  {\ell_{0}+1}y^{m_{0}+1}$, whereas the right hand side does. As the elements $e^kx^{\ell}y^m$ are a basis over ${\bf A}$ (Corollary \ref {cor.smith-basis}), we obtain a contradiction. Therefore  $\alpha=0$, and hence $x^s\sum_{i=0}^p a_{i}e^i=0$,  and again from Corollary \ref {cor.smith-basis}, we obtain that $a_{i}=0$ for all $i$. This proves that $\ker \varphi=\{0\}$. The proof for $\psi$ is similar.

 \end{proof}
 
 %%%%%%%%%%%%%%%%
 \begin{cor}\label{cor.U-basis} Every element $\tilde{u}$ in $U({\bf A},u,n)$ can be written uniquely in   the form
 $$\tilde{u}=\sum_{{\ell>0,k\geq 0}}\alpha_{k,\ell}\tilde{y}^{\ell}\tilde{e}^k+\sum_{m\geq 0,r\geq 0}\beta_{m,r}\tilde{x}^m \tilde{e}^r$$
 with $\alpha_{k,\ell}, \beta_{m,r}\in {\bf A}$.
 \end{cor}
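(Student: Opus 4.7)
My plan is to prove existence and uniqueness separately, relying on two facts already established: the reduction $\tilde y\tilde x = u(\tilde e+n)$ together with $\tilde x\tilde y=u(\tilde e)$ from the defining relations, and the weight-space decomposition $(5\text{-}1\text{-}4)$ together with the injectivity statement in Proposition \ref{prop.U-contient-pol}.

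For existence, I would start from the fact (Corollary \ref{cor.smith-basis}) that $\{y^{i}x^{j}e^{k}\}$ spans $S({\bf A},f,n)$ over ${\bf A}$; passing to the quotient $U({\bf A},u,n)=S({\bf A},f,n)/(\Omega_{1})$ (see Remark \ref{rem-U=quotient}), this shows that $\{\tilde y^{i}\tilde x^{j}\tilde e^{k}\}$ spans $U({\bf A},u,n)$. I then want to eliminate the ``mixed'' monomials, i.e.\ those with both $i>0$ and $j>0$. For such a monomial I would write $\tilde y^{i}\tilde x^{j}\tilde e^{k}=\tilde y^{i-1}(\tilde y\tilde x)\tilde x^{j-1}\tilde e^{k}=\tilde y^{i-1}u(\tilde e+n)\tilde x^{j-1}\tilde e^{k}$ and then use the commutation rule $\tilde e\tilde x=\tilde x(\tilde e+n)$ (equivalently $P(\tilde e)\tilde x=\tilde x P(\tilde e+n)$) to move the $\tilde e$-polynomial past the remaining $\tilde x$'s on the right. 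The quantity $\min(i,j)$ strictly decreases, so induction on $\min(i,j)$ reduces everything to monomials of the form $\tilde y^{\ell}\tilde e^{k}$ (with $\ell>0$) or $\tilde x^{m}\tilde e^{r}$ (with $m\geq 0$), which is exactly the desired shape.

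For uniqueness, suppose
$$\sum_{\ell>0,\,k\geq 0}\alpha_{k,\ell}\tilde y^{\ell}\tilde e^{k}+\sum_{m\geq 0,\,r\geq 0}\beta_{m,r}\tilde x^{m}\tilde e^{r}=0.$$
The key observation is that each monomial is an eigenvector of $\operatorname{ad}(\tilde e)$: $\tilde y^{\ell}\tilde e^{k}$ has weight $-\ell n$ and $\tilde x^{m}\tilde e^{r}$ has weight $mn$. Using the weight-space decomposition $(5\text{-}1\text{-}4)$, the relation splits into the independent equations
$$\sum_{k\geq 0}\alpha_{k,\ell}\tilde y^{\ell}\tilde e^{k}=0\ (\ell>0),\qquad \sum_{r\geq 0}\beta_{m,r}\tilde x^{m}\tilde e^{r}=0\ (m\geq 0).$$
Each of these is the vanishing of $\psi(P)$ or $\varphi(P)$ (with $s=\ell$ or $s=m$, and the $s=0$ case amounting to injectivity on ${\bf A}[\tilde e]$) for some $P\in{\bf A}[t]$. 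Proposition \ref{prop.U-contient-pol} asserts exactly that these maps are injective, so $P=0$ in each case, forcing all $\alpha_{k,\ell}$ and $\beta_{m,r}$ to vanish.

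The main obstacle I anticipate is not the algebra but the logical organization: one must know in advance that the weight decomposition is compatible with separating $\tilde y$-terms from $\tilde x$-terms (which it is, since $\tilde y^{\ell}\tilde e^{k}$ and $\tilde x^{m}\tilde e^{r}$ live in different weight spaces for $\ell>0$ and $m\geq 0$), and then recognize that the remaining one-sided uniqueness is precisely what Proposition \ref{prop.U-contient-pol} was set up to provide. Once this is noticed, the proof is essentially immediate; the technical heavy lifting was already done in Proposition \ref{prop.U-contient-pol}.
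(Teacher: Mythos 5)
Your argument is correct and follows essentially the same two-step scheme as the paper's own proof: reduce mixed monomials using the relations $\tilde x\tilde y=u(\tilde e)$, $\tilde y\tilde x=u(\tilde e+n)$ together with $P(\tilde e)\tilde x=\tilde x P(\tilde e+n)$ for existence, then invoke the weight decomposition $(5\text{-}1\text{-}4)$ and Proposition \ref{prop.U-contient-pol} for uniqueness. The only cosmetic difference is in the existence step: you peel off one $\tilde y\tilde x$ pair at a time with an induction on $\min(i,j)$, whereas the paper works with the opposite ordered basis $\tilde x^i\tilde y^j\tilde e^k$ and collapses $\tilde x^j\tilde y^j$ to a single polynomial $Q_j(\tilde e)$ in one stroke; both routes are legitimate and reach the same conclusion.
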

 \begin{proof}
 We have already noticed that any element in $U({\bf A},u,n)$ can be written (in a non unique way) as a linear combination, with coefficients in ${\bf A}$, of the elements $ \tilde{x}^i\tilde{y}^j\tilde{e}^k$. 
 
 \noindent  Suppose that $i\geq j$. Then we have $ \tilde{x}^i\tilde{y}^j\tilde{e}^k=  \tilde{x}^{i-j} \tilde{x}^j\tilde{y}^j\tilde{e}^k.$
As   $\tilde{x}\tilde{y}=u(\tilde{e})$, we see that $\tilde{x}^j\tilde{y}^j=Q_{j}(\tilde{e})$, where $Q_{j}$ is a polynomial with coefficients in ${\bf A}$. Therefore $\tilde{x}^i\tilde{y}^j\tilde{e}^k= \sum_{\ell} \gamma_{\ell}\tilde{x}^{i-j}\tilde{e}^{\ell}$, with $\gamma_{\ell}\in {\bf A}$. Similarly one can prove that if $i<j$, we have $\tilde{x}^i\tilde{y}^j\tilde{e}^k= \sum_{\ell} \delta_{\ell}\tilde{y}^{j-i}\tilde{e}^{\ell}$, with $\delta_{{\ell}}\in {\bf A}$. This shows that any element $\tilde{u}$ in $U({\bf A},u,n)$ can be written in the expected form.   

  \noindent Suppose now that:
 $$ \sum_{{\ell>0,k\geq 0}}\alpha_{k,\ell}\tilde{y}^{\ell}\tilde{e}^k+\sum_{m\geq 0,r\geq 0}\beta_{m,r}\tilde{x}^m \tilde{e}^r=0.$$
 Then, as $\tilde{y}^{\ell}\tilde{e}^k\in U({\bf A},u,n)^{-\ell}$ and $\tilde{x}^m \tilde{e}^r\in U({\bf A},u,n)^m$, we deduce from $(5-1-4)$ that 
 \begin{align*}
 \forall  \ell>0 , \, \sum_{k}\alpha_{k,\ell}\tilde{y}^{\ell}\tilde{e}^k=0, \qquad  \forall  m \geq0, \,  \sum_{r}\beta_{m,r}\tilde{x}^m \tilde{e}^r=0. 
 \end{align*}
  Then from  Proposition \ref{prop.U-contient-pol}, we deduce that $\alpha_{k,\ell} =0$ and $\beta_{m,r} =0$.

 \end{proof}
 
%%%%%%%%%%%%%%%%%%%%%%%%%%%%%%%%%%%%%%
 \vskip 10pt
\subsection{Generators and relations for $D(V)^{G'}$}\label{ss-generators-relations}\hfill
\vskip 5pt
Let $ \cal{Z}({\cal T})[t]$ be the polynomials in one variable with coefficients in $ \cal{Z}({\cal T})$.
 From the commutation rules $[E,X]=EX-XE=d_{0}X$ and $[E,Y]=-d_{0}Y$, we easily deduce that for $P\in \cal{Z}({\cal T})[t] $ we have  
 $$YP(E)= P(E+d_{0})Y, \quad  XP(E)=P(E-d_{0})X.\eqno(5-2-1)$$

 From    Proposition \ref{th-key-structure-T0} above, we know that any element in $D(V)^G$ can be written uniquely as a polynomial in $E$ with coefficients in ${\cal Z}({\cal T})$. As $XY$ and $YX$ belong to  $D(V)^G$, there exist therefore two uniquely  determined  polynomials $u_{XY} \text { and } u_{YX}\in {\cal Z}({\cal T})[t]$ such that $XY=u_{XY}(E)$ and $YX=u_{YX}(E)$. From $(5-2-1)$ we obtain that 
 $$YXY=u_{YX}(E)Y= Yu_{XY}(E)=u_{XY}(E+d_{0})Y$$
 and therefore 
 $$u_{YX}(E)=u_{XY}(E+d_{0})\eqno (5-2-2)$$
 
 As the polynomial $u_{XY}$ will play an important role in  Theorem \ref{th-generateurs-relations}  below, let us emphasize the connection between $u_{XY}$ and the Bernstein-Sato polynomial $b_{Y}$. Remark first that $b_{Y}=b_{XY}$.   We know from Corollary \ref{cor-decomp-pol-sym} that 
 \begin{align*}
 &h(XY)(-\lambda+\rho)
 =b_{XY}(\lambda)=b_{Y}(\lambda)\\
 &=\sum_{i=0}^p\alpha_{i}(-\lambda+\rho)(a_{0}d_{0}+a_{1}d_{1}+\dots+a_{r}d_{r})^i=\sum_{i=0}^p\alpha_{i}(-\lambda+\rho)(h(E)(-\lambda+\rho))^{i}
 \end{align*}
 with uniquely defined polynomials $\alpha_{i}\in \C[A]^{W_{0},\tau}$. Therefore we obtain:
  \begin{prop}\label{prop-relation-u-b}\hfill
  
  Keeping the notations above, we have
 $$u_{XY}(t)=\sum_{i=0}^p h^{-1}(\alpha_{i})t^i$$
 \end{prop}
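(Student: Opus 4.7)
The plan is to apply the Harish-Chandra isomorphism $h$ to the defining identity $XY=u_{XY}(E)$ and invoke the uniqueness in the decomposition of Corollary~\ref{cor-decomp-pol-sym}.

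First, write $u_{XY}(t)=\sum_{i=0}^{p}c_{i}t^{i}$ with uniquely determined $c_{i}\in {\cal Z}({\cal T})$ (such an expansion exists by Theorem~\ref{th-key-structure-T0} applied to $XY\in D(V)^{G}$, and the degree bound $p$ is the one appearing in the statement). Then
\[
XY=u_{XY}(E)=\sum_{i=0}^{p}c_{i}E^{i}.
\]
Since the $c_{i}$ lie in the center ${\cal Z}({\cal T})$, they commute with $E$, so applying the algebra homomorphism $h:D(V)^{G}\longrightarrow \C[A]^{W_{0}}$ (Theorem~\ref{th-Knop}) we obtain
\[
h(XY)=\sum_{i=0}^{p}h(c_{i})\,h(E)^{i}.
\]
By Proposition~\ref{prop-centre-T}, $h$ restricts to an isomorphism ${\cal Z}({\cal T})\simeq \C[A]^{W_{0},\tau}$, hence each $h(c_{i})$ belongs to $\C[A]^{W_{0},\tau}$.

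Next I would invoke the definition of $h$ and the fact that the Bernstein--Sato polynomial of $XY$ coincides with $b_{Y}$ (since $X$ acts as the scalar $1$ on every $V_{\mathbf a}$, shifting only the index $a_{0}$; so $b_{XY}=b_{Y}$). Therefore $h(XY)(\lambda+\rho)=b_{Y}(\lambda)$, which by the hypothesis of the proposition equals
\[
\sum_{i=0}^{p}\beta_{i}(\lambda+\rho)\,h(E)(\lambda+\rho)^{i},
\]
with $\beta_{i}\in \C[A]^{W_{0},\tau}$. Confronting this with the expression $\sum h(c_{i})h(E)^{i}$ gives two expansions of $h(XY)$ as a polynomial in $h(E)$ with coefficients in $\C[A]^{W_{0},\tau}$.

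The final step is uniqueness: Corollary~\ref{cor-decomp-pol-sym} asserts that any element of $\C[A]^{W_{0}}$ admits a \emph{unique} such expansion. Consequently $h(c_{i})=\beta_{i}$ for all $i$, i.e.\ $c_{i}=h^{-1}(\beta_{i})$, and
\[
u_{XY}(t)=\sum_{i=0}^{p}h^{-1}(\beta_{i})\,t^{i},
\]
as claimed. There is no real obstacle here, the argument is essentially a translation of the identity $XY=u_{XY}(E)$ across the Harish-Chandra isomorphism followed by an appeal to the uniqueness statement of Corollary~\ref{cor-decomp-pol-sym}; the only point that must be checked explicitly is that the coefficients $h(c_{i})$ land in $\C[A]^{W_{0},\tau}$, which is precisely the content of Proposition~\ref{prop-centre-T}.
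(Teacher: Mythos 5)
Your proof is correct and follows essentially the same route as the paper: apply the Harish--Chandra isomorphism $h$ to the defining identity $XY=u_{XY}(E)$, observe $b_{XY}=b_Y$, and compare the resulting expansion of $h(XY)$ in powers of $h(E)$ with the unique decomposition supplied by Corollary~\ref{cor-decomp-pol-sym}. You merely make explicit the intermediate step $h(XY)=\sum_i h(c_i)h(E)^i$ and the fact (via Proposition~\ref{prop-centre-T}) that the coefficients land in $\C[A]^{W_0,\tau}$, both of which the paper leaves implicit.
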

  \begin{theorem}\label{th-generateurs-relations}  
 
Let $f_{XY}(t)= u_{XY}(t+d_{0})-u_{XY}(t)$. The mapping 
 $$\tilde x\longmapsto X,\quad \tilde y \longmapsto Y, \quad \tilde e\longmapsto E$$
 extends uniquely to an isomorphism of ${\cal Z}({\cal T})$-algebras between   $U({\cal Z}({\cal T}), u_{XY},d_{0})$ (which is isomorphic to $S({\cal Z}({\cal T}), f_{XY}, d_{0})/(\Omega_{1})$) and $D(V)^{G'}={\cal T}_{0}[X,Y]$.
 \end{theorem}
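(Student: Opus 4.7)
The strategy is to invoke the universal property implicit in the presentation of $U({\cal Z}({\cal T}),u_{XY},d_0)$: first construct a well-defined $ {\cal Z}({\cal T})$-algebra homomorphism $\Phi:U({\cal Z}({\cal T}),u_{XY},d_0)\longrightarrow D(V)^{G'}$ sending $(\tilde x,\tilde y,\tilde e)$ to $(X,Y,E)$, then check surjectivity and injectivity by comparing the normal forms of both algebras.

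\emph{Step 1 (Construction of $\Phi$).} I would verify that $X$, $Y$, $E$ satisfy in $D(V)^{G'}$ the four defining relations of $U({\cal Z}({\cal T}),u_{XY},d_0)$. The two commutation relations $[E,X]=d_{0}X$ and $[E,Y]=-d_{0}Y$ are immediate from $X\in{\cal T}_{1}$ and $Y\in{\cal T}_{-1}$, i.e.\ from the very definition of the grading in $(3\text{-}1\text{-}2)$. The relation $XY=u_{XY}(E)$ is the definition of $u_{XY}$, and $YX=u_{XY}(E+d_{0})$ is formula $(5\text{-}2\text{-}2)$. Hence there is a unique ${\cal Z}({\cal T})$-algebra homomorphism $\Phi$ as above.

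\emph{Step 2 (Surjectivity).} By Corollary \ref{cor-decomp-D(V)G'} (2), every $D\in {\cal T}_{0}[X,Y]=D(V)^{G'}$ admits a decomposition
$$D=\sum_{k\in \mathbb N^{*},\ell\in \mathbb N}H_{k,\ell}Y^{k}E^{\ell}+\sum_{r,s\in \mathbb N}H'_{r,s}X^{r}E^{s},\qquad H_{k,\ell},H'_{r,s}\in {\cal Z}({\cal T}).$$
Thus $D(V)^{G'}$ is generated, as ${\cal Z}({\cal T})$-algebra, by $X$, $Y$ and $E$, so $\Phi$ is surjective.

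\emph{Step 3 (Injectivity).} This is the step that requires the careful work done in section 5.1 about Smith algebras over a ring. By Corollary \ref{cor.U-basis} applied to the ring ${\bf A}={\cal Z}({\cal T})$, every element of $U({\cal Z}({\cal T}),u_{XY},d_{0})$ has a unique expression of the shape
$$\sum_{\ell>0,k\geq 0}\alpha_{k,\ell}\,\tilde y^{\ell}\tilde e^{k}\;+\;\sum_{m,r\geq 0}\beta_{m,r}\,\tilde x^{m}\tilde e^{r}\qquad (\alpha_{k,\ell},\beta_{m,r}\in {\cal Z}({\cal T})).$$
The image of such an element under $\Phi$ is exactly the canonical decomposition of a member of $D(V)^{G'}$ provided by Corollary \ref{cor-decomp-D(V)G'} (2). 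Comparing the two statements, $\Phi$ maps a ${\cal Z}({\cal T})$-basis of $U({\cal Z}({\cal T}),u_{XY},d_{0})$ bijectively onto a ${\cal Z}({\cal T})$-basis of $D(V)^{G'}$; in particular, the kernel is trivial.

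\emph{Main obstacle and remark.} No single step is deep once the groundwork is available; the real content sits in the results that feed Step 3. The normal form in $D(V)^{G'}$ rests on Theorem \ref{th-key-structure-T0} (the decomposition ${\cal T}_{0}={\cal Z}({\cal T})\oplus E{\cal T}_{0}$ obtained via the Harish-Chandra isomorphism and Lemma \ref{lemma-key}), and the normal form in $U({\cal Z}({\cal T}),u_{XY},d_{0})$ rests on Proposition \ref{prop.U-contient-pol}, which ensures that the polynomial subalgebra ${\cal Z}({\cal T})[\tilde e]$ embeds faithfully. Once these two parallel normal forms are established, the isomorphism statement of Theorem \ref{th-generateurs-relations} reduces to the straightforward matching of generators in Step 1 and the basis comparison in Step 3. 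The final identification $U({\cal Z}({\cal T}),u_{XY},d_{0})\simeq S({\cal Z}({\cal T}),f_{XY},d_{0})/(\Omega_{1})$ is Remark \ref{rem-U=quotient}, with $f_{XY}(t)=u_{XY}(t+d_{0})-u_{XY}(t)$.
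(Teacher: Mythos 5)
Your proof is correct and follows essentially the same route as the paper: verify the defining relations of $U({\cal Z}({\cal T}),u_{XY},d_0)$ in $D(V)^{G'}$, use the universal property to get a surjection, and conclude injectivity by matching the ${\cal Z}({\cal T})$-normal form of Corollary \ref{cor.U-basis} against that of Corollary \ref{cor-decomp-D(V)G'}. The only cosmetic difference is that you invoke Corollary \ref{cor-decomp-D(V)G'} for both surjectivity and injectivity, whereas the paper cites Theorem \ref{th-key-structure-T0} for generation; these are equivalent since the corollary is a direct consequence of that theorem.
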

 
 \begin{proof}
 
 As $[E,X]=d_{0}X$, $[E,Y]=-d_{0}Y$, $XY=u_{XY}(E)$ and $YX=u_{XY}(E+d_{0})$ (see $(5-2-2)$), and as  from Proposition \ref{th-key-structure-T0} the algebra   $D(V)^{G'}={\cal T}_{0}[X,Y]$ is generated over ${\cal Z}({\cal T})$ by $X,Y,E$, we know (universal property)   that the mapping 
$$\begin{array}{ccc}
 \tilde{x} \longmapsto X,\qquad \tilde{y}  \longmapsto Y,\qquad\tilde{e}   \longmapsto E\\
 \end{array} $$
 extends uniquely to a surjective morphism of ${\cal Z}({\cal T})$-algebras:
 $$\varphi:  U({\cal Z}({\cal T}),u_{XY},d_{0})   \longrightarrow   D(V)^{G'}.$$
 From Corollary \ref{cor.U-basis} any element $\tilde u$ in   $U({\cal Z}({\cal T}),u_{XY},d_{0}) $
 can be written uniquely in the form 
 $$\tilde{u}=\sum_{{\ell>0,k\geq 0}}\alpha_{k,\ell}\tilde{y}^{\ell}\tilde{e}^k+\sum_{m\geq 0,r\geq 0}\beta_{m,r}\tilde{x}^m \tilde{e}^r$$
 with $\alpha_{k,\ell}, \beta_{m,r}\in {\cal Z}({\cal T})$. Suppose now that $\tilde u\in \ker (\varphi)$, then
 $$\varphi(\tilde u)=\sum_{{\ell>0,k\geq 0}}\alpha_{k,\ell}Y^{\ell}E^k+\sum_{m\geq 0,r\geq 0}\beta_{m,r}X^m E^r=0,$$
 with $\alpha_{k,\ell},\beta_{m,r}\in {\cal Z}({\cal T})$. Then Corollary \ref{cor-decomp-D(V)G'} implies that $\alpha_{k,\ell}=\beta_{m,r}=0$. Hence $\varphi$ is an isomorphism.

 \end{proof}
 
 %\vskip 10pt
 %%%%%%%%%%%%%%%%%%%%%%%%%%%%%%%%%%%%%%%
 %%%%%%%%%%%%%%%%%%%%%%%%%%%%%%%%%%%%%%%%
  \vskip 20pt
 \section{Radial components}
%%%%%%%%%%%%%%%%%%%%%%%%%%%%%%%%%%%%%%%
 \vskip 5pt
\subsection{Radial components and  Bernstein-Sato polynomials}\hfill
 \vskip 5pt

Remember thar for  $\widetilde{\bf a}= (a_{1},a_{2},\dots,a_{r})\in {\bb N}^r$ we have defined  $V_{\widetilde{\bf a}}=V_{(0,a_{1},\dots,a_{r})}$. Remember also that for ${\bf a}=(a_{0},a_{1},\dots,a_{r})$ we have $V_{\bf a}=\Delta^{a_{0}}V_{\widetilde{\bf a}}$. We know   from Proposition \ref{prop-G'-isotypic} that the spaces $U_{\widetilde{\bf a}}=\oplus_{a_{0}\in {\bb N}}\Delta_{0}^{a_{0}}V_{\widetilde{\bf a}}$ are the $G'$-isotypic components of ${\bb C}[V]$ and that the spaces $W_{\widetilde{\bf a}}=\oplus_{a_{0}\in {\bb Z}}\Delta_{0}^{a_{0}}V_{\widetilde{\bf a}}$ are the $G'$-isotypic components of ${\bb C}[{\cal O}]$. Therefore the algebra $D(V)^{G'}={\cal T}_{0}[X,Y]$    stabilizes each space $U_{\widetilde{\bf a}}$ and the algebra $D({\cal O})^{G'}={\cal T}_{0}[X,X^{-1}]={\cal T}$ stabilizes each space $W_{\widetilde{\bf a}}$.

Let us consider the restriction map:

$$\begin{array}{rll} D({\cal O})^{G'}&\longrightarrow& End(W_{\widetilde{\bf a}})\\
D&\longmapsto &r_{_{\widetilde{\bf a}}}(D)=D_{|_{W_{\widetilde{\bf a}}}}
\end{array}
$$
\begin{definition} Let $D\in D({\cal O})^{G'}={\cal T}_{0}[X,X^{-1}]={\cal T}$. The operator $r_{_{\widetilde{\bf a}}}(D)=D_{|_{W_{\widetilde{\bf a}}}}$ is called the radial component of $D$ with respect to $\widetilde{\bf a}$.
\end{definition}

\begin{example} Consider the case where $\widetilde{\bf a}=0$. Then $W_{\widetilde{\bf a}}={\bb C}[\Delta_{0},\Delta_{0}^{-1}]$, and $r_{_{0}}(D) $ is the endomorphism of ${\bb C}[t,t^{-1}]$ defined by $D(\varphi\circ\Delta_{0})= r_{_{0}}(D)(\varphi)\circ \Delta_{0}$. The operator $r_{_{0}}(D)$ is the usual radial component of $D$ (we will see below that it is a differential operator).
\end{example}

Notice now that the space $W_{\widetilde{\bf a}}=\oplus_{a_{0}\in {\bb Z}}\Delta_{0}^{a_{0}}V_{\widetilde{\bf a}}$ can be viewed as the space of Laurent polynomials in $\Delta_{0}$, with coefficients in $V_{\widetilde{\bf a}}$, in other words any $P\in W_{\widetilde{\bf a}}$ can be written uniquely under the form
$$P= \sum \Delta_{0}^p \gamma_{p}$$
with $\gamma_{p}\in V_{\widetilde{\bf a}}$. This can also be written as $P=\varphi\circ(\Delta_{0})$, with $\varphi(t)=\sum t^p\gamma_{p}\in V_{\widetilde{\bf a}}[t, t^{-1}]$ (where $V_{\widetilde{\bf a}}[t, t^{-1}]$ is precisely the set of linear combinations $\sum t^p\gamma_{p}$, with $\gamma_{p}\in V_{\widetilde{\bf a}}$).

There is a natural action of $D({\bb C}^*)={\bb C}[t,t^{-1},t\frac{d}{dt}]$  on $V_{\widetilde{\bf a}}[t, t^{-1}]$ given by $\frac{d}{dt}t^p\gamma_{p}=pt^{p-1}\gamma_{p}.$

\begin{prop}\label{prop-radial=Berntein-Sato}\hfill

Let $D\in  {\cal T}_{n}$ a homogeneous element of degree $n$. Let $b_{D}$ be its Bernstein-Sato polynomial. Let $\varphi\in V_{\widetilde{\bf a}}[t, t^{-1}]$. Then $D(\varphi\circ \Delta_{0})=(t^nb_{D}(t\frac{d}{dt},a_{1},\dots,a_{r})\varphi)\circ\Delta_{0} $, in other words $r_{_{\widetilde{\bf a}}}(D)=t^nb_{D}(t\frac{d}{dt},a_{1},\dots,a_{r})$.
\end{prop}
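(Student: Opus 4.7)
The plan is to reduce the claim to a monomial computation and then invoke the defining property of the Bernstein--Sato polynomial. By linearity in $\varphi$, it suffices to verify the identity for $\varphi(t) = t^p \gamma_p$ with $p \in \mathbb{Z}$ and $\gamma_p \in V_{\widetilde{\bf a}}$, since both $r_{\widetilde{\bf a}}(D)$ and the operator $t^n b_D(t\tfrac{d}{dt}, a_1, \dots, a_r)$ are $\mathbb{C}$-linear in the corresponding argument.

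For such a monomial $\varphi$, first I would observe that $\varphi \circ \Delta_0 = \Delta_0^p \gamma_p$, which by $(2{-}2{-}4)$ lies in the irreducible component $V_{(p, a_1, \dots, a_r)}$. Applying the Bernstein--Sato relation from Definition~3.1.3 to the homogeneous element $D \in \mathcal{T}_n$, we obtain
\[
D(\Delta_0^p \gamma_p) = b_D(p, a_1, \dots, a_r)\, \Delta_0^{n+p}\gamma_p,
\]
which will be the left-hand side of the equality to establish.

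For the right-hand side, the key point is that on the monomial $t^p \gamma_p$ the Euler-type operator $t\tfrac{d}{dt}$ acts as the scalar $p$. Consequently, substituting $p$ for the first variable in $b_D$,
\[
b_D\!\left(t\tfrac{d}{dt}, a_1, \dots, a_r\right)(t^p \gamma_p) = b_D(p, a_1, \dots, a_r)\, t^p \gamma_p,
\]
and multiplication by $t^n$ then composition with $\Delta_0$ yields $b_D(p, a_1, \dots, a_r)\, \Delta_0^{n+p}\gamma_p$, matching the left-hand side. The equality on all of $V_{\widetilde{\bf a}}[t,t^{-1}]$ follows by linearity, giving the stated formula for $r_{\widetilde{\bf a}}(D)$.

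There is no genuine obstacle: the only point requiring any care is the observation that the action of $t\tfrac{d}{dt}$ on $V_{\widetilde{\bf a}}[t,t^{-1}]$ mirrors the eigenvalue decomposition of $\Delta_0^p V_{\widetilde{\bf a}}$ under the shift in the first coordinate of ${\bf a}$, which is precisely what allows the substitution of $p$ for $t\tfrac{d}{dt}$ in $b_D$. This mirrors (and is in fact consistent with) the embedding $\Psi$ of Theorem~4.2.4, so the statement can alternatively be viewed as reading off the definition of $\Psi$ on a weight vector.
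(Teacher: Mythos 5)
Your proof is correct and follows essentially the same route as the paper's: reduce by linearity to monomials $\varphi = t^p\gamma_p$, identify $\Delta_0^p\gamma_p \in V_{(p,a_1,\dots,a_r)}$, apply the defining property of $b_D$ to get the left-hand side, and observe that $t\frac{d}{dt}$ acts by the scalar $p$ on $t^p$ to evaluate the right-hand side. The only cosmetic difference is that the paper expands $b_D$ coefficient-by-coefficient in the $a_0$-variable before substituting, while you substitute directly.
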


\begin{proof}

It is enough to show that the two operators coincide on elements of the form $\Delta_{0}^p\gamma_{p}$, with $\gamma_{p}\in V_{\widetilde{\bf a}}$. Then $\varphi=t^p\gamma_{p}$. Let us write
$$b_{D}({\bf a})=\sum_{k}c_{k}(a_{1},\dots,a_{r})a_{0}^k $$ 

We have:
$$\begin{array}{l}
(t^nb_{D}(t\frac{d}{dt},a_{1},\dots,a_{r})\varphi)\circ\Delta_{0}=t^n(\sum_{k}c_{k}(a_{1},\dots,a_{r})(t\frac{t}{dt})^k \varphi)\circ \Delta_{0}\\
=t^n(\sum_{k}c_{k}(a_{1},\dots,a_{r})p^kt^p\gamma_{p})\circ \Delta_{0}\\
=(t^nb_{D}(p,a_{1},\dots,a_{r})t^p\gamma_{p})\circ \Delta_{0}=b_{D}(p,a_{1},\dots,a_{r})\Delta_{0}^{p+n}\gamma_{p}\\
=D(\Delta_{0}^p\gamma_{p})
\end{array}
$$

\end{proof}

%\begin{rem}\hfill

%From Theorem \ref{th-embedding} we remark that  $r_{_{\widetilde{\bf a}}}(D)$ is nothing else but  $\Psi^{-1}(D)(t,t^{-1},t\frac{d}{dt},a_{1},\dots,a_{r})$.
%\end{rem}

\begin{cor} \hfill 

If $(G,V)$ is a $PV$ of commutative parabolic type of rank $r+1$, then the radial component of $Y$ is given by 
$$r_{_{\widetilde{\bf a}}}(Y)= ct^{-1}\prod_{j=0}^r(t\frac{d}{dt}+a_{1}+\dots+a_{j}+j\frac{d}{2})$$
\end{cor}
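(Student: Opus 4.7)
The plan is to derive the corollary as an immediate specialization of two facts that are already at our disposal. First, Proposition \ref{prop-radial=Berntein-Sato} tells us that for any homogeneous $D \in {\cal T}_n$ with Bernstein--Sato polynomial $b_D$, the radial component with respect to $\widetilde{\bf a}$ is given by
$$r_{\widetilde{\bf a}}(D) = t^n\, b_D\Bigl(t\tfrac{d}{dt},\, a_1,\dots, a_r\Bigr).$$
Second, Example \ref{example-b_Y-commutative} (formula $(3$-$1$-$3)$) furnishes the explicit factorization
$$b_Y(X_0, X_1,\dots, X_r) = c\prod_{j=0}^r\Bigl(X_0 + X_1 + \cdots + X_j + j\tfrac{d}{2}\Bigr)$$
for a $PV$ of commutative parabolic type, where $d = \tfrac{\dim(V) - d_0}{(d_0-1)d_0}$.

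First I would record that $Y \in {\cal T}_{-1}$, which was already observed from the relations $(3$-$1$-$1)$: the invariant $\Delta_0$ has character $\lambda_0$ and hence multiplication by $\Delta_0$ raises the $E$-degree by $d_0$ while $Y = \Delta_0^*(\partial)$ lowers it by $d_0$, so the relevant ${\cal T}$-grading exponent for $Y$ is $n=-1$. This fixes the prefactor $t^{-1}$ in the target formula. Then I would apply Proposition \ref{prop-radial=Berntein-Sato} directly to $D=Y$, obtaining
$$r_{\widetilde{\bf a}}(Y) = t^{-1}\, b_Y\Bigl(t\tfrac{d}{dt},\, a_1,\dots, a_r\Bigr).$$

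Finally I would substitute $X_0 = t\tfrac{d}{dt}$ and $X_i = a_i$ for $i=1,\dots,r$ into the product formula for $b_Y$. The $j$-th factor $X_0 + X_1 + \cdots + X_j + j\tfrac{d}{2}$ becomes $t\tfrac{d}{dt} + a_1 + \cdots + a_j + j\tfrac{d}{2}$ (with the convention that the sum $a_1 + \cdots + a_j$ is empty for $j=0$), and absorbing the normalizing constant $c$ into the choice of $\Delta_0^*$ yields exactly the asserted formula. There is really no obstacle here: both ingredients have been established, and the argument is essentially a substitution; the only subtle point worth double-checking is that the indexing convention in the product matches, namely that the variable $X_0$ (which corresponds to the $\Delta_0$-direction and is turned into $t\tfrac{d}{dt}$ by the embedding $\Psi$ of Theorem \ref{th-embedding}) is present in every factor while $X_1,\dots,X_r$ enter progressively.
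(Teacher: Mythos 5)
Your proof is correct and takes essentially the same route as the paper's: apply Proposition \ref{prop-radial=Berntein-Sato} to $Y\in{\cal T}_{-1}$ and substitute the explicit Bernstein--Sato polynomial $b_Y$ from Example \ref{example-b_Y-commutative}. (The paper's one-line proof likewise silently suppresses the normalization constant $c$, so your remark about absorbing it is if anything slightly more careful than the original.)
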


\begin{proof}

This is just a consequence of the formula for $b_{Y}$ given in Example \ref{example-b_Y-commutative}

\end{proof}

\begin{example}\label{example-det}
Consider the case  $1)$ in example \ref{ex-commutative-PV}. In this case $G=(SL(n)\times SL(n))\times  \C^*$ acting on  $x\in V=M_{n}(\C)$   by $(g_{1},g_{2},t).x=tg_{1}xg_{2}^{-1}$ . Then $\Delta_{0}=X=\det x$ and 
$$Y=\Delta_{0}^*(\partial)=\det(\frac{\partial}{\partial x_{ij}})$$
where $x_{ij}$ are the coefficients of the matrix $X$.
As in this case $\frac{d}{2}=1$ (see \cite{Muller-Rubenthaler-Schiffmann}, table 2 p. 122), we have $b_{Y}(a_{0},a_{1},\dots,a_{n-1})=\prod_{j=0}^{n-1}(a_{0}+a_{1}+\dots+a_{j}+j)$. Therefore the   radial component $ r_{_{0}}(Y)$ defined by $\det(\frac{\partial}{\partial x_{ij}})(\varphi\circ\det)=(r_{_{0}}(Y) \varphi)\circ\det$ is given by 
$$r_{_{0}}(Y)=t^{-1}\prod_{j=0}^{n-1}(t\frac{d}{dt}+ j).$$
This radial component has already been calculated by Ra\"\i s (\cite{Rais}, p.22), by other methods. He obtained that $ r_{_{0}}(Y)=[\prod_{j=2}^{n-1}(t\frac{d}{dt}+ j)]\frac{d}{dt}$. A simple calculation shows that the two operators are the same.

\end{example}
 
%%%%%%%%%%%%%%%%%%%%%%%%%%%%%%%%%%%%%%%
 \vskip 15pt
\subsection{Algebras of radial components}
 \vskip 5pt

\begin{definition} The radial component algebra  $R_{\widetilde{\bf a }}$ is the image of $D(V)^{G'}={\cal T}_{0}[X,Y]$ under the map $D\longmapsto r_{_{\widetilde{\bf a}}}(D)$.
\end{definition}
Remember from Proposition \ref{prop-centre-T} that the elements $D$ in $ {\cal Z}({\cal T})$ are characterized by the fact that the corresponding Bernstein-Sato polynomial $b_{D}$ does not depend on the $a_{0}$ variable. Therefore such a $D$ acts by the scalar $b_{D}(0,\widetilde{\bf a})$ on $W_{\widetilde{\bf a}}$, that is $r_{_{\widetilde{\bf a}}}(D)=b_{D}(0,\widetilde{\bf a})Id_{_{W_{\widetilde{\bf a}}}}$.

Let us consider the polynomial $u_{XY}\in {\cal Z}({\cal T})[t]$ which was introduced in section \ref{ss-generators-relations}. If $u_{XY}=\sum_{j}c_{i}t^i$, with $c_{i}\in {\cal Z}({\cal T})$, we define
$$r_{_{\widetilde{\bf a}}}(u_{XY})=\sum_{j}r_{_{\widetilde{\bf a}}}(c_{i})t^i\in {\bb C}[t].$$

\begin{lemma}\label{lemme-injectivite-Y} \hfill

Let ${\bf a}=(a_{0},a_{1}, \dots,a_{r})\in \N^{r+1}$. Suppose that $a_{0}>0$. Then the map $P\longmapsto YP$ from $V_{\bf a}$ to $V_{{\bf a} -1}$ is a $G'$-equivariant isomorphism.

\end{lemma}
\begin{proof} (Sketch) It is enough to prove that this map is not $0$.  As ${\Delta_{0}^*}^{a_{0}}\dots {\Delta_{r}^*}^{a_{r}}$ is the lowest weight vector of $V_{\bf a}^*\subset \C[V^*]$, we have ${\Delta_{0}^*}(\partial)^{a_{0}} \dots {\Delta_{r}^*}(\partial)^{a_{r}}\Delta_{0}^{a_{0}}\dots \Delta_{r}^{a_{r}} (0)\neq0$. Hence ${\Delta_{0}^*}(\partial)\Delta_{0}^{a_{0}}\dots \Delta_{r}^{a_{r}} \neq0$.

\end{proof}

\begin{theorem}\label{th-radial-components}\hfill

The radial component algebra $R_{\widetilde{\bf a}}$ is isomorphic, as an associative algebra over ${\bb C}$,  to the algebra $U({\bb C}, r_{_{\widetilde{\bf a}}}(u_{XY}), d_{0})$ introduced in Definition $\ref{def-Smith-algebra}$.

\end{theorem}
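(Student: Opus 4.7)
The plan is to combine Theorem \ref{th-generateurs-relations} with the observation that the radial component map trivializes the centre ${\cal Z}({\cal T})$. Concretely, by Proposition \ref{prop-centre-T}, any $D\in {\cal Z}({\cal T})$ has a Bernstein--Sato polynomial $b_{D}$ independent of $a_{0}$, so $D$ acts on $W_{\widetilde{\bf a}}=\bigoplus_{a_{0}\in\Z}\Delta_{0}^{a_{0}}V_{\widetilde{\bf a}}$ by the scalar $b_{D}(\widetilde{\bf a})$. Thus $\chi:{\cal Z}({\cal T})\to \C$, $D\mapsto b_{D}(\widetilde{\bf a})$, is an algebra homomorphism, and it is precisely the restriction of $r_{\widetilde{\bf a}}$ to ${\cal Z}({\cal T})$. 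Under this character, the ${\cal Z}({\cal T})$-coefficient polynomial $u_{XY}$ specialises to the complex polynomial $r_{\widetilde{\bf a}}(u_{XY})\in \C[t]$ introduced just before the theorem.

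Next I would construct the surjection $\Phi:U(\C,r_{\widetilde{\bf a}}(u_{XY}),d_{0})\twoheadrightarrow R_{\widetilde{\bf a}}$. Setting $x:=r_{\widetilde{\bf a}}(X)$, $y:=r_{\widetilde{\bf a}}(Y)$, $e:=r_{\widetilde{\bf a}}(E)$, the commutation relations $[E,X]=d_{0}X$, $[E,Y]=-d_{0}Y$ restrict to $[e,x]=d_{0}x$, $[e,y]=-d_{0}y$. Since $XY=u_{XY}(E)$ and $YX=u_{XY}(E+d_{0})$ in $D(V)^{G'}$ (Theorem \ref{th-generateurs-relations} and $(5-2-2)$), and since $r_{\widetilde{\bf a}}$ commutes with the scalar action of ${\cal Z}({\cal T})$, applying $r_{\widetilde{\bf a}}$ gives $xy=r_{\widetilde{\bf a}}(u_{XY})(e)$ and $yx=r_{\widetilde{\bf a}}(u_{XY})(e+d_{0})$. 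By the universal property of $U(\C,r_{\widetilde{\bf a}}(u_{XY}),d_{0})$, this defines a $\C$-algebra homomorphism $\Phi$ sending $\tilde{x},\tilde{y},\tilde{e}$ to $x,y,e$. Surjectivity is immediate: $D(V)^{G'}$ is generated over ${\cal Z}({\cal T})$ by $X,Y,E$, and after applying $r_{\widetilde{\bf a}}$ the coefficients become scalars, so $R_{\widetilde{\bf a}}$ is generated over $\C$ by $x,y,e$.

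For injectivity, which I expect to be the main technical step, I would invoke Corollary \ref{cor.U-basis}: any element of $U(\C,r_{\widetilde{\bf a}}(u_{XY}),d_{0})$ has a unique expression
\[
\tilde{u}=\sum_{\ell>0,\,k\geq 0}\alpha_{k,\ell}\,\tilde{y}^{\ell}\tilde{e}^{k}+\sum_{m\geq 0,\,s\geq 0}\beta_{m,s}\,\tilde{x}^{m}\tilde{e}^{s}
\]
with $\alpha_{k,\ell},\beta_{m,s}\in\C$. Assuming $\Phi(\tilde{u})=0$, I would realise the image operators via the embedding $\Psi$ of Theorem \ref{th-embedding}: on $W_{\widetilde{\bf a}}$, the operator $x$ acts as multiplication by $t$, the operator $e$ as the scalar $d_{0}t\tfrac{d}{dt}+\sum_{i\geq 1}d_{i}a_{i}$, and $y$ as $t^{-1}b_{Y}(t\tfrac{d}{dt},a_{1},\dots,a_{r})$, all inside $\C[t,t^{-1},t\tfrac{d}{dt}]$ acting on $V_{\widetilde{\bf a}}[t,t^{-1}]\simeq W_{\widetilde{\bf a}}$. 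The $\Z$-grading by powers of $t$ separates the terms of degrees $-\ell<0$ from those of degrees $m\geq 0$, and for each fixed $t$-degree one is left with a polynomial in $t\tfrac{d}{dt}$; since this operator takes infinitely many distinct eigenvalues $a_{0}\in\Z$ on $W_{\widetilde{\bf a}}$, such a polynomial vanishes only if all its coefficients vanish, forcing all $\alpha_{k,\ell}$ and $\beta_{m,s}$ to be zero.

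The heart of the argument is therefore the linear independence step, which rests entirely on the concrete faithful realisation of $R_{\widetilde{\bf a}}$ inside the extended Weyl algebra of Section 4; the rest is formal manipulation of the universal presentation together with Theorem \ref{th-generateurs-relations}. As a byproduct, the proof makes explicit that $R_{\widetilde{\bf a}}$ is a ``classical'' Smith algebra over $\C$ modulo its Casimir $\Omega_{1}$, with defining polynomial $r_{\widetilde{\bf a}}(u_{XY})$ obtained by substituting $(X_{1},\dots,X_{r})=(a_{1},\dots,a_{r})$ and projecting coefficients via $\chi$.
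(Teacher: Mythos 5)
Your proof is correct and follows essentially the same route as the paper's: verify the defining relations of $U(\C,r_{\widetilde{\bf a}}(u_{XY}),d_{0})$ for $r_{\widetilde{\bf a}}(X),r_{\widetilde{\bf a}}(Y),r_{\widetilde{\bf a}}(E)$ to get the surjection, then pull an element of the kernel back to the canonical form of Corollary \ref{cor.U-basis} and use the $t$-grading together with the spectrum of $t\tfrac{d}{dt}$ (equivalently, the eigenvalue $a_{0}d_{0}+d(\widetilde{\bf a})$ of $E$ on $\Delta_{0}^{a_{0}}V_{\widetilde{\bf a}}$ as $a_{0}$ ranges over $\Z$) to force all coefficients to vanish. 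The only cosmetic difference is that you phrase the injectivity step via the embedding $\Psi$ of Theorem \ref{th-embedding}, whereas the paper evaluates the operator directly on functions $\Delta_{0}^{a_{0}}P$; these are the same computation.
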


\begin{proof}

The algebra $R_{\widetilde{\bf a}}$ is generated over ${\bb C}$ by the elements $r_{_{\widetilde{\bf a}}}(E),r_{_{\widetilde{\bf a}}}(X), r_{_{\widetilde{\bf a}}}(Y)$. 
The defining relations of $U({\bb C}, r_{_{\widetilde{\bf a}}}(u_{XY}), d_{0})$ are verified:  

 - $[r_{_{\widetilde{\bf a}}}(E),r_{_{\widetilde{\bf a}}}(X)]=r_{_{\widetilde{\bf a}}}([E,X])=d_{0}r_{_{\widetilde{\bf a}}}(X)$

- $[r_{_{\widetilde{\bf a}}}(E),r_{_{\widetilde{\bf a}}}(Y)]=r_{_{\widetilde{\bf a}}}([E,Y])=-d_{0}r_{_{\widetilde{\bf a}}}(Y)$

- $r_{_{\widetilde{\bf a}}}(X)r_{_{\widetilde{\bf a}}}(Y)=r_{_{\widetilde{\bf a}}}(XY)= r_{_{\widetilde{\bf a}}}(u_{XY})(r_{_{\widetilde{\bf a}}}(E))$

- $r_{_{\widetilde{\bf a}}}(Y)r_{_{\widetilde{\bf a}}}(X)=r_{_{\widetilde{\bf a}}}(YX)= r_{_{\widetilde{\bf a}}}(u_{XY})(r_{_{\widetilde{\bf a}}}(E)+d_{0}).$

Therefore the mapping
$$\tilde x\longmapsto r_{_{\widetilde{\bf a}}}(X),\quad \tilde y \longmapsto r_{_{\widetilde{\bf a}}}(Y), \quad \tilde e\longmapsto r_{_{\widetilde{\bf a}}}(E)$$
extends uniquely to a  surjective  morphism of ${\bb C}$-algebras 
$$\varphi_{\widetilde{\bf a}}: U({\bb C}, r_{_{\widetilde{\bf a}}}(u_{XY}),d_{0})\longrightarrow R_{\widetilde{\bf a}}.$$

 From Corollary \ref{cor.U-basis} any element $\tilde u$ in   $U({\bb C}, r_{_{\widetilde{\bf a}}}(u_{XY}),d_{0})$
 can be written uniquely in the form 
 $$\tilde{u}=\sum_{{\ell>0,k\geq 0}}\alpha_{k,\ell}\tilde{y}^{\ell}\tilde{e}^k+\sum_{m\geq 0,s\geq 0}\beta_{m,s}\tilde{x}^m \tilde{e}^s$$
 with $\alpha_{k,\ell}, \beta_{m,s}\in  {\bb C}$. Suppose now that $\tilde u\in \ker (\varphi_{\widetilde{\bf a}})$, then
 
 $$\varphi_{\widetilde{\bf a}}(\tilde u)=\sum_{{\ell>0,k\geq 0}}\alpha_{k,\ell} r_{_{\widetilde{\bf a}}} (Y)^{\ell} r_{_{\widetilde{\bf a}}} (E)^k+\sum_{m\geq 0,s\geq 0}\beta_{m,s} r_{_{\widetilde{\bf a}}} (X)^m  r_{_{\widetilde{\bf a}}} (E)^s=0.$$
 
 Applying this operator to a function of the form $\Delta^{a_{0}}P$, with $P\in V_{\widetilde{\bf a}}$, we obtain:
 $$\sum_{{\ell>0}}Y^{\ell}(\sum_{k\geq 0}\alpha_{k,\ell}   E^k\Delta^{a_{0}}P)+\sum_{m\geq 0}X^m(\sum_{s\geq 0}\beta_{m,s}  E^s\Delta^{a_{0}}P)=0.$$
 
As the operators $X$ and $Y$ have degree $d_{0}$ and $-d_{0}$ respectively, this implies that
$$\forall \ell,\quad Y^\ell(\sum_{k\geq 0}\alpha_{k,\ell}   E^k\Delta^{a_{0}})P=0, \,\,\text{ and }\,\, \forall m,\quad X^m(\sum_{s\geq 0}\beta_{m,s}  E^s\Delta^{a_{0}})P=0.$$

Therefore, by Lemma \ref{lemme-injectivite-Y} we obtain that

$$\forall \ell, a_{0}> \ell\quad \sum_{k\geq 0}\alpha_{k,\ell}   E^k\Delta^{a_{0}}P=0, \,\,\text{ and }\,\, \forall m,\forall a_{0}\quad \sum_{s\geq 0}\beta_{m,s}  E^s\Delta^{a_{0}}P=0.$$

As $E\Delta^{a_{0}}P=(a_{0}d_{0}+d(\widetilde{\bf a}))\Delta^{a_{0}}P$, where $d(\widetilde{\bf a})=a_{1}d_{1}+\dots+a_{r}d_{r}$, we have:
$$\forall \ell, \text{ and } a_{0}>\ell, \,\, \sum_{k\geq 0}\alpha_{k,\ell} (a_{0}d_{0}+d(\widetilde{\bf a}))^k  \Delta^{a_{0}}P=0, \,\,\text{ and } \forall m,\forall a_{0}\,\,  \sum_{s\geq 0}\beta_{m,s}  (a_{0}d_{0}+d(\widetilde{\bf a}))^s\Delta^{a_{0}}P=0.$$

Hence
$$\forall \ell, \text{ and } a_{0}>\ell \,\,\sum_{k\geq 0}\alpha_{k,\ell}   (a_{0}d_{0}+d(\widetilde{\bf a}))^k =0, \,\,\text{ and }\,\, \forall m,\forall a_{0}\,\,\sum_{s\geq 0}\beta_{m,s}  (a_{0}d_{0}+d(\widetilde{\bf a}))^s =0.$$
This implies that $\forall (\ell,k)$ and $\forall (m,s)$, we have $\alpha_{k,\ell}=0$ and $\beta_{m,s}=0.$ Hence $\tilde u=0$ and $\varphi_{\widetilde{\bf a}}$ is injective.

\end{proof}

 \begin{rem} For $\widetilde{\bf a}=0$, the preceding result was first obtained by T. Levasseur (\cite{Levasseur}), by other methods.
 \end{rem}
 
 Define now  $ J_{_{\widetilde{\bf a}}}=\ker  ({r_{_{\widetilde{\bf a}}}}_{|_{D(V)^{G'}}})$.  $ J_{_{\widetilde{\bf a}}}$ is a two-sided ideal of $D(V)^{G'}={\cal T}_{0}[X,Y]$. Remember from Proposition \ref{prop-poly-XY-graduation}
  that any  $D\in D(V)^{G'}$  can be   written uniquely  in the form:
$$ D=\sum_{k\in {\bb N}^* }u_{k}Y^{k}+\sum_{n\in {\bb N}}v_{n}X^{n} \text{   {\rm (finite \,\,sum)}}$$
 
where $u_{k}, v_{n}\in  {\cal   T}_{0}=D(V)^{G}$.
\begin{lemma}\label{lemma-ker-rad}\hfill
$$J_{_{\widetilde{\bf a}}}=\{D=\sum_{k\in {\bb N}^* }u_{k}Y^{k}+\sum_{n\in {\bb N}}v_{n}X^{n}\,|\,u_{k},v_{n}\in J_{_{\widetilde{\bf a}}}\cap {\cal T}_{0}\}.$$
\end{lemma}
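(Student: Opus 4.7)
The plan is to prove the two inclusions separately, exploiting the $\Z$-grading ${\cal T}=\bigoplus_{p\in\Z}{\cal T}_{p}$ (which refines the decomposition $W_{\widetilde{\bf a}}=\bigoplus_{a_{0}\in\Z}\Delta_{0}^{a_{0}}V_{\widetilde{\bf a}}$, since an element of ${\cal T}_{p}$ shifts $a_{0}$ by $p$) together with the fact that the summands in the unique decomposition of Proposition \ref{prop-poly-XY-graduation} live in distinct graded pieces.

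The inclusion $\supseteq$ is immediate: if $u_{k}\in J_{_{\widetilde{\bf a}}}\cap{\cal T}_{0}$, then $u_{k}$ annihilates the $G'$-isotypic component $W_{\widetilde{\bf a}}$, and since $Y^{k}(W_{\widetilde{\bf a}})\subseteq W_{\widetilde{\bf a}}$, we get $u_{k}Y^{k}(W_{\widetilde{\bf a}})=0$; the argument for $v_{n}X^{n}$ is identical.

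For $\subseteq$, suppose $D=\sum_{k>0}u_{k}Y^{k}+\sum_{n\geq0}v_{n}X^{n}\in J_{_{\widetilde{\bf a}}}$. For $P\in\Delta_{0}^{p}V_{\widetilde{\bf a}}$, the term $u_{k}Y^{k}P$ lies in $\Delta_{0}^{p-k}V_{\widetilde{\bf a}}$ while $v_{n}X^{n}P$ lies in $\Delta_{0}^{p+n}V_{\widetilde{\bf a}}$, and these are pairwise distinct direct summands of $W_{\widetilde{\bf a}}$; hence $DP=0$ forces each summand to vanish individually. Thus $J_{_{\widetilde{\bf a}}}$ is graded, and the lemma reduces to the two implications
$v_{n}X^{n}|_{W_{\widetilde{\bf a}}}=0 \Rightarrow v_{n}\in J_{_{\widetilde{\bf a}}}$ and $u_{k}Y^{k}|_{W_{\widetilde{\bf a}}}=0 \Rightarrow u_{k}\in J_{_{\widetilde{\bf a}}}$. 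The first is trivial: $X^{n}$ acts on $W_{\widetilde{\bf a}}$ as multiplication by the invertible element $\Delta_{0}^{n}$, which is a bijection of $W_{\widetilde{\bf a}}$, so $v_{n}|_{W_{\widetilde{\bf a}}}=0$ follows at once.

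The only subtle point is the second implication. The equation $u_{k}Y^{k}P=0$ for $P\in\Delta_{0}^{p}V_{\widetilde{\bf a}}$ becomes the scalar identity
$$b_{u_{k}}(p-k,\widetilde{\bf a})\,b_{Y^{k}}(p,\widetilde{\bf a})=0\quad\text{for all }p\in\Z,$$
where $b_{Y^{k}}(p,\widetilde{\bf a})=\prod_{j=0}^{k-1}b_{Y}(p-j,\widetilde{\bf a})$ by Proposition \ref{prop-calcul-Bernstein-Sato}. The crucial fact---the main obstacle, really---is that $b_{Y}(\,\cdot\,,\widetilde{\bf a})$ is a nonzero polynomial in its first variable for every $\widetilde{\bf a}$. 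This follows from the relation $XY=u_{XY}(E)$ in Section \ref{ss-generators-relations}: evaluating on $V_{\bf a}$ gives $b_{Y}({\bf a})=u_{XY}(a_{0}d_{0}+a_{1}d_{1}+\dots+a_{r}d_{r})$, and since $Y\neq 0$ forces $u_{XY}\neq 0$ and $d_{0}\geq 1$, the right-hand side is a non-trivial polynomial in $a_{0}$ of degree $\deg u_{XY}$. Consequently $b_{Y^{k}}(\,\cdot\,,\widetilde{\bf a})$ vanishes for only finitely many integers $p$, so the identity above forces $b_{u_{k}}(q,\widetilde{\bf a})=0$ for infinitely many $q\in\Z$, hence identically. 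Therefore $u_{k}$ annihilates every $\Delta_{0}^{q}V_{\widetilde{\bf a}}$, i.e.\ $u_{k}\in J_{_{\widetilde{\bf a}}}\cap{\cal T}_{0}$, completing the proof.
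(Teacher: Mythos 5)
Your structure is a sensible unrolling of what the paper compresses into a citation: where the paper immediately invokes the isomorphism $R_{\widetilde{\bf a}}\cong U(\C,r_{\widetilde{\bf a}}(u_{XY}),d_{0})$ of Theorem \ref{th-radial-components} together with the basis result Corollary \ref{cor.U-basis}, you grade $J_{_{\widetilde{\bf a}}}$ directly, dispatch the $v_{n}X^{n}$ terms by the invertibility of $\Delta_{0}$ on $\Omega$, and then analyse the Bernstein--Sato scalars for the $u_{k}Y^{k}$ terms. You also correctly isolate the genuine content: one needs $b_{Y}(\,\cdot\,,\widetilde{\bf a})$ to be a nonzero polynomial in the $a_{0}$-variable for \emph{every} $\widetilde{\bf a}\in\N^{r}$. (This same input is what makes the injectivity step in the paper's proof of Theorem \ref{th-radial-components} work.)

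However, your justification of that nonvanishing is not valid as written. You treat $u_{XY}$ as a scalar polynomial and conclude from $u_{XY}\neq 0$ that $a_{0}\mapsto u_{XY}(a_{0}d_{0}+\dots+a_{r}d_{r})$ has degree $\deg u_{XY}$. But $u_{XY}\in{\cal Z}({\cal T})[t]$: writing $u_{XY}=\sum_{i\le p}c_{i}t^{i}$ with $c_{i}\in{\cal Z}({\cal T})$, $c_{p}\neq0$, the scalar one actually obtains on $V_{\bf a}$ is
$b_{Y}(a_{0},\widetilde{\bf a})=\sum_{i}b_{c_{i}}(\widetilde{\bf a})\,(a_{0}d_{0}+d(\widetilde{\bf a}))^{i}$,
whose leading coefficient in $a_{0}$ is $d_{0}^{p}\,b_{c_{p}}(\widetilde{\bf a})$. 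That $c_{p}\neq0$ in ${\cal Z}({\cal T})$ does not by itself forbid $b_{c_{p}}(\widetilde{\bf a})=0$, and conceivably all $b_{c_{i}}(\widetilde{\bf a})=0$, for the particular $\widetilde{\bf a}$ at hand; so ``$u_{XY}\neq0$'' alone does not give the required nonvanishing. A correct argument uses the nondegeneracy of the pairing in $(2$-$2$-$2)$ between $V_{\bf a}^{*}={\Delta_{0}^{*}}^{a_{0}}V_{\widetilde{\bf a}}^{*}$ and $V_{\bf a}=\Delta_{0}^{a_{0}}V_{\widetilde{\bf a}}$: for $Q\in V_{\widetilde{\bf a}}$ and $Q^{*}\in V_{\widetilde{\bf a}}^{*}$ one computes, since constant-coefficient operators commute,
$$\langle\, {\Delta_{0}^{*}}^{a_{0}}Q^{*},\ \Delta_{0}^{a_{0}}Q\,\rangle = \bigl(Q^{*}(\partial)\,Y^{a_{0}}(\Delta_{0}^{a_{0}}Q)\bigr)(0)= b_{Y^{a_{0}}}\bigl((a_{0},\widetilde{\bf a})\bigr)\,\langle Q^{*},Q\rangle,$$
so nondegeneracy forces $b_{Y^{a_{0}}}\bigl((a_{0},\widetilde{\bf a})\bigr)=\prod_{j=1}^{a_{0}}b_{Y}(j,\widetilde{\bf a})\neq0$ for every $a_{0}\in\N$, hence $b_{Y}(\,\cdot\,,\widetilde{\bf a})\not\equiv0$. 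With this replacement your proof closes; alternatively one can simply cite Theorem \ref{th-radial-components} and Corollary \ref{cor.U-basis} as the paper does.
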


\begin{proof}

  From Theorem \ref{th-radial-components} the algebra $R_{\widetilde{\bf a}}$ is isomorphic to $U({\bb C}, r_{_{\widetilde{\bf a}}}(u_{XY}), d_{0})$. If $r_{_{\widetilde{\bf a}}}(D)=\sum_{k\in {\bb N}^* }r_{_{\widetilde{\bf a}}}(u_{k})r_{_{\widetilde{\bf a}}}(Y)^{k}+\sum_{n\in {\bb N}}r_{_{\widetilde{\bf a}}}(v_{n})r_{_{\widetilde{\bf a}}}(X)^{n}=0$, then, from Corollary \ref{cor.U-basis}, we obtain that $r_{_{\widetilde{\bf a}}}(u_{k})=0$ and $r_{_{\widetilde{\bf a}}}(v_{n})=0$ for all $k$ and all $n$.

\end{proof}

  Let us now give a set of generators for the ideal  $\ker  (r_{_{\widetilde{\bf a}}})$ in $D(V)^{G'}={\cal T}_{0}[X,Y]$. 
From  Proposition \ref{prop-radial=Berntein-Sato} we obtain that $\displaystyle r_{_{\widetilde{\bf a}}}(E)= d_{0}(t\frac{d}{dt}) +d(\widetilde{\bf a})$. Therefore  $\displaystyle r_{_{\widetilde{\bf a}}}(\frac{E-d(\widetilde{\bf a})}{d_{0}})=t\frac{d}{dt}$. Define  $\displaystyle G_{i}^{\widetilde{\bf a}}=R_{i}-b_{R_{i}}(\frac{E-d(\widetilde{\bf a}) }{d_{0}},\widetilde{\bf a})$ where the $R_{i}$'s are the Capelli operators introduced in section $2.2$. Obviously $G_{i}^{\widetilde{\bf a}}\in D(V)^G={\cal T}_{0}$. Using Proposition \ref{prop-radial=Berntein-Sato} again we obtain 
$$  \begin{array}{l} \displaystyle r_{_{\widetilde{\bf a}}}(G_{i}^{\widetilde{\bf a}})= r_{_{\widetilde{\bf a}}}(R_{i}-b_{R_{i}}(\frac{E-d(\widetilde{\bf a} }{d_{0}},\widetilde{\bf a}))=r_{_{\widetilde{\bf a}}}(R_{i})- b_{R_{i}}(r_{_{\widetilde{\bf a}}}(\frac{E-d(\widetilde{\bf a})}{d_{0}}), \widetilde{\bf a})\\
 \displaystyle =b_{R_{i}}(t\frac{d}{dt},\widetilde{\bf a})-b_{R_{i}}(t\frac{d}{dt},\widetilde{\bf a})=0.\end{array}$$
 
 Hence the elements $G_{i}^{\widetilde{\bf a}}  $ belong to  $J_{_{\widetilde{\bf a}}}\cap {\cal T}_{0}$.

\begin{theorem}\label{th-ker-rad} \hfill 

 The elements  $G_{i}^{\widetilde{\bf a}}  $ are generators of $J_{_{\widetilde{\bf a}}}$:
$$ J_{_{\widetilde{\bf a}}}= \ker  ({r_{_{\widetilde{\bf a}}}}_{|_{D(V)^{G'}}})=  \sum_{i=0}^rD(V)^{G'}G_{i}^{\widetilde{\bf a}} =
 \sum_{i=0}^r G_{i}^{\widetilde{\bf a}} D(V)^{G'}.
 $$  
\end{theorem}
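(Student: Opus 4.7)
\medskip

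\textbf{Proof plan.} The inclusions $\sum_{i}D(V)^{G'}G_{i}^{\widetilde{\bf a}}\subset J_{_{\widetilde{\bf a}}}$ and $\sum_{i}G_{i}^{\widetilde{\bf a}}D(V)^{G'}\subset J_{_{\widetilde{\bf a}}}$ are immediate, since each $G_{i}^{\widetilde{\bf a}}$ was already shown to lie in the two-sided ideal $J_{_{\widetilde{\bf a}}}$. For the reverse inclusions I will first compute $J_{_{\widetilde{\bf a}}}\cap {\cal T}_{0}$ explicitly, and then combine this with Lemma~\ref{lemma-ker-rad} and the $\tau$-equivariance rules for $X$ and~$Y$.

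The central point is that $J_{_{\widetilde{\bf a}}}\cap {\cal T}_{0}=I$, where $I$ denotes the ideal of ${\cal T}_{0}$ generated by $G_{0}^{\widetilde{\bf a}},\dots,G_{r}^{\widetilde{\bf a}}$. One inclusion is clear. For the other, observe first that $G_{r}^{\widetilde{\bf a}}=0$ (since $b_{R_{r}}=b_{E}$ forces $b_{R_{r}}((E-d(\widetilde{\bf a}))/d_{0},\widetilde{\bf a})=E$), while for $i<r$ the congruence $R_{i}\equiv b_{R_{i}}((E-d(\widetilde{\bf a}))/d_{0},\widetilde{\bf a})\pmod I$ expresses $R_{i}$ as an explicit polynomial in $E=R_{r}$. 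Combined with the polynomiality ${\cal T}_{0}=\C[R_{0},\dots,R_{r}]$ from Theorem~\ref{th-generateurs-op-inv}, this is a standard substitution quotient of a polynomial algebra and yields the algebra isomorphism ${\cal T}_{0}/I\simeq\C[E]$. By Proposition~\ref{prop-radial=Berntein-Sato}, the map $r_{_{\widetilde{\bf a}}}|_{{\cal T}_{0}}$ factors through ${\cal T}_{0}/I\simeq\C[E]$ as $E\mapsto d_{0}t\frac{d}{dt}+d(\widetilde{\bf a})$, which is injective into $\C[t\frac{d}{dt}]$ because $d_{0}\neq 0$; hence $\ker(r_{_{\widetilde{\bf a}}}|_{{\cal T}_{0}})=I$, as claimed. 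A direct computation giving $b_{\tau(u)}(a_{0},\widetilde{\bf a})=b_{u}(a_{0}-1,\widetilde{\bf a})$ (easily deduced from $Xu=\tau(u)X$) then shows that $J_{_{\widetilde{\bf a}}}\cap {\cal T}_{0}$, and therefore also~$I$, is stable under~$\tau$.

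To conclude, take $D\in J_{_{\widetilde{\bf a}}}$ and write $D=\sum_{k\geq 1}u_{k}Y^{k}+\sum_{n\geq 0}v_{n}X^{n}$ by Lemma~\ref{lemma-ker-rad}, so that $u_{k},v_{n}\in I$. Expressing each $u_{k}$ as $\sum_{i}G_{i}^{\widetilde{\bf a}}w_{i,k}$ with $w_{i,k}\in {\cal T}_{0}$ (possible by commutativity of~${\cal T}_{0}$) and treating the $v_{n}$'s the same way gives $D\in \sum_{i}G_{i}^{\widetilde{\bf a}}D(V)^{G'}$. For the mirror identity $J_{_{\widetilde{\bf a}}}=\sum_{i}D(V)^{G'}G_{i}^{\widetilde{\bf a}}$ I switch to the decomposition $D=\sum_{k}Y^{k}\tilde{u}_{k}+\sum_{n}X^{n}\tilde{v}_{n}$ of Proposition~\ref{prop-poly-XY-graduation}: the identities $u_{k}Y^{k}=Y^{k}\tau^{k}(u_{k})$ and $v_{n}X^{n}=X^{n}\tau^{-n}(v_{n})$ (Proposition~\ref{prop-stabilite-tau}), together with the $\tau$-stability of~$I$ just established, show $\tilde u_{k},\tilde v_{n}\in I$, and the same commutativity argument in~${\cal T}_{0}$ now places the $G_{i}^{\widetilde{\bf a}}$'s on the right. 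The main obstacle is the algebraic identification ${\cal T}_{0}/I\simeq\C[E]$ in the central step; once this and the $\tau$-stability are in hand, the rest of the argument is bookkeeping via $XD=\tau(D)X$ and the commutativity of~${\cal T}_{0}$.
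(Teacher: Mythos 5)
Your proof is correct and follows essentially the same route as the paper: reduce to the degree-zero piece via Lemma~\ref{lemma-ker-rad}, use the polynomiality ${\cal T}_{0}=\C[R_{0},\dots,R_{r}]=\C[G_{0}^{\widetilde{\bf a}},\dots,G_{r-1}^{\widetilde{\bf a}},E]$ to compute $J_{_{\widetilde{\bf a}}}\cap{\cal T}_{0}$, and then identify it with the ideal generated by the $G_{i}^{\widetilde{\bf a}}$ from the injectivity of $E\mapsto d_{0}\,t\frac{d}{dt}+d(\widetilde{\bf a})$. Your reformulation via the quotient isomorphism ${\cal T}_{0}/I\simeq\C[E]$ is just a tidier packaging of the paper's direct expansion $D=\sum Q_{i}E^{i}$, $Q_{i}\in\C[G_{j}^{\widetilde{\bf a}}]$; what you add that the paper leaves tacit is the observation that $J_{_{\widetilde{\bf a}}}\cap{\cal T}_{0}$ (hence $I$) is $\tau$-stable and the consequent bookkeeping with $uY^{k}=Y^{k}\tau^{k}(u)$, $vX^{n}=X^{n}\tau^{-n}(v)$ needed to obtain both the left- and right-generated versions of the identity. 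Filling in that step is a genuine improvement in rigor, but the underlying argument is the paper's.
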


\begin{proof}

%As $\displaystyle G_{i}^{\widetilde{\bf a}} \in {\cal T}_{0} $, the  equality 
%$$\sum_{i=0}^rD(V)^{G'}G_{i}^{\widetilde{\bf a}} =
 %\sum_{i=0}^r G_{i}^{\widetilde{\bf a}} D(V)^{G'}$$
%is a consequence of Proposition \ref{prop-poly-XY-graduation}.

From Lemma \ref{lemma-ker-rad}, it is now enough to prove that 
$$J_{_{\widetilde{\bf a}}}\cap {\cal T}_{0}\subset \sum_{i=0}^rD(V)^{G}G_{i}^{\widetilde{\bf a}} = \sum_{i=0}^r{\cal T}_{0}G_{i}^{\widetilde{\bf a}}  .$$

Let $D\in J_{_{\widetilde{\bf a}}}\cap {\cal T}_{0}$. As ${\cal T}_{0}={\bb C}[R_{0},\dots,R_{r}]$ (Proposition \ref{th-generateurs-op-inv}), we have also ${\cal T}_{0}={\bb C}[G_{0}^{\widetilde{\bf a}},\dots,G_{r}^{\widetilde{\bf a}},E].$

Therefore $D=\sum Q_{i}E^{i}$, where $Q_{i}\in {\bb C}[G_{0}^{\widetilde{\bf a}},\dots,G_{r}^{\widetilde{\bf a}}].$ Hence $ Q_{i}\in Q_{i}(0)+\sum_{i=0}^rD(V)^{G}G_{i}^{\widetilde{\bf a}}$. Then
 $$0=r_{_{\widetilde{\bf a}}}(D)=\sum_{i}Q_{i}(0)r_{_{\widetilde{\bf a}}}(E^{i})=\sum_{i}Q_{i}(0)(d_{0}(t\frac{d}{dt}) +d(\widetilde{\bf a}))^{i}.$$
Therefore $Q_{i}(0)=0$ $(i=0,\dots,r)$. Hence $Q_{i}\in  \sum_{i=0}^rD(V)^{G}G_{i}^{\widetilde{\bf a}}$, which yields $D\in \sum_{i=0}^rD(V)^{G}G_{i}^{\widetilde{\bf a}}$.

\end{proof}

\begin{rem}\label{rem-Levasseur-ker}
For $\widetilde{\bf a}=0$, the result of the preceding Theorem is due to T. Levasseur (\cite{Levasseur}, Theorem 4.11. (v)).
\end{rem}

%%%%%%%%%%%%%%%%%%%%%%%%%%%%%%%%%%%%%%%
 \vskip 15pt
\subsection{Rational  radial component algebras}\hfill
 \vskip 5pt

\begin{definition} The rational radial component algebra  $R_{\widetilde{\bf a }}^r$ is the image of $D({\cal O})^{G'}={\cal T}_{0}[X,X^{-1}]={\cal T}$ under the map $D\longmapsto r_{_{\widetilde{\bf a}}}(D)$.
\end{definition}

In fact as shown in the following proposition the structure of the algebras  $R_{\widetilde{\bf a }}^r$ is  more simpler than the structure of  $R_{\widetilde{\bf a }}$, and the ideal    $I_{_{\widetilde{\bf a}}}= \ker  ({r_{_{\widetilde{\bf a}}}})\subset {\cal T}$ has the same generators as  $J_{_{\widetilde{\bf a}}}  $.

\begin{prop}\label{prop-rational-radial-components}\hfill

$1)$ For all $\widetilde{\bf a}$, the rational radial component algebra $R_{\widetilde{\bf a}}^r$ is isomorphic to ${\bb C}[t,t^{-1},t\frac{d}{dt}]$.

$2)$  $I_{_{\widetilde{\bf a}}}=\ker  ({r_{_{\widetilde{\bf a}}}})   =  \sum_{i=0}^r {\cal T}G_{i}^{\widetilde{\bf a}} =
 \sum_{i=0}^r G_{i}^{\widetilde{\bf a}}  {\cal T}.$
\end{prop}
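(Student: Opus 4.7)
The plan is to dispose of part $1)$ by direct computation from Proposition \ref{prop-radial=Berntein-Sato}, and then to reduce part $2)$ to the analogous statement inside ${\cal T}_{0}$, namely Theorem \ref{th-ker-rad}, via the ideal structure of ${\cal T}$ supplied by Theorem \ref{th-ideaux-T}. For part $1)$, applying Proposition \ref{prop-radial=Berntein-Sato} together with the Bernstein-Sato polynomials of Example \ref{example-b_Y-commutative} $(b_{X}=b_{X^{-1}}=1$ and $b_{E}({\bf a})=\sum_{i}a_{i}d_{i})$ immediately gives $r_{_{\widetilde{\bf a}}}(X)=t$, $r_{_{\widetilde{\bf a}}}(X^{-1})=t^{-1}$ and $r_{_{\widetilde{\bf a}}}(E)=d_{0}t\tfrac{d}{dt}+d(\widetilde{\bf a})$, so that the three generators $t$, $t^{-1}$, $t\tfrac{d}{dt}$ of $\C[t,t^{-1},t\tfrac{d}{dt}]$ all lie in $R_{\widetilde{\bf a}}^{r}$. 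For the reverse inclusion, by Proposition \ref{prop-poly-XY-graduation} any $D\in {\cal T}$ writes uniquely as $D=\sum_{k\in\Z}u_{k}X^{k}$ with $u_{k}\in {\cal T}_{0}$, and Proposition \ref{prop-radial=Berntein-Sato} then gives $r_{_{\widetilde{\bf a}}}(D)=\sum_{k}b_{u_{k}}(t\tfrac{d}{dt},\widetilde{\bf a})\,t^{k}\in\C[t,t^{-1},t\tfrac{d}{dt}]$.

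For part $2)$, the inclusions $\sum_{i}{\cal T}G_{i}^{\widetilde{\bf a}}\subseteq I_{_{\widetilde{\bf a}}}$ and $\sum_{i}G_{i}^{\widetilde{\bf a}}{\cal T}\subseteq I_{_{\widetilde{\bf a}}}$ are immediate from $r_{_{\widetilde{\bf a}}}(G_{i}^{\widetilde{\bf a}})=0$. The first preliminary step in the opposite direction is to check that $I_{_{\widetilde{\bf a}}}\cap {\cal T}_{0}=J_{_{\widetilde{\bf a}}}\cap {\cal T}_{0}$: one direction is trivial, and if $D\in J_{_{\widetilde{\bf a}}}\cap {\cal T}_{0}$ then the polynomial $a_{0}\mapsto b_{D}(a_{0},\widetilde{\bf a})$ vanishes on every $a_{0}\in\N$, hence vanishes identically as a polynomial in $a_{0}$, so that $D$ acts as $0$ on every $\Delta_{0}^{a_{0}}V_{\widetilde{\bf a}}$ with $a_{0}\in\Z$, i.e.\ on all of $W_{\widetilde{\bf a}}$, giving $D\in I_{_{\widetilde{\bf a}}}$.

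Since $I_{_{\widetilde{\bf a}}}$ is a two-sided ideal of ${\cal T}$, Theorem \ref{th-ideaux-T} $(3)$ then yields
$$I_{_{\widetilde{\bf a}}}=\bigoplus_{k\in\Z}X^{k}(I_{_{\widetilde{\bf a}}}\cap {\cal T}_{0})=\bigoplus_{k\in\Z}(I_{_{\widetilde{\bf a}}}\cap {\cal T}_{0})X^{k},$$
with $I_{_{\widetilde{\bf a}}}\cap {\cal T}_{0}$ automatically $\tau$-stable. Combined with the preceding step and with Theorem \ref{th-ker-rad}, this reads $I_{_{\widetilde{\bf a}}}\cap {\cal T}_{0}=J_{_{\widetilde{\bf a}}}\cap {\cal T}_{0}=\sum_{i}{\cal T}_{0}G_{i}^{\widetilde{\bf a}}$. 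Substituting into the first decomposition and regrouping gives $I_{_{\widetilde{\bf a}}}=\bigoplus_{k}X^{k}\sum_{i}{\cal T}_{0}G_{i}^{\widetilde{\bf a}}=\sum_{i}(\bigoplus_{k}X^{k}{\cal T}_{0})G_{i}^{\widetilde{\bf a}}=\sum_{i}{\cal T}G_{i}^{\widetilde{\bf a}}$; substituting into the second, and using the commutativity of ${\cal T}_{0}$ to move each $G_{i}^{\widetilde{\bf a}}$ past the coefficients in ${\cal T}_{0}$, produces $I_{_{\widetilde{\bf a}}}=\sum_{i}G_{i}^{\widetilde{\bf a}}{\cal T}$.

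I expect no real obstacle: the only genuinely non-automatic step is the equality $J_{_{\widetilde{\bf a}}}\cap {\cal T}_{0}=I_{_{\widetilde{\bf a}}}\cap {\cal T}_{0}$, which hinges on the polynomial dependence of $b_{D}$ on $a_{0}$; everything else is a repackaging of Theorems \ref{th-ideaux-T} and \ref{th-ker-rad} through the skew Laurent polynomial structure of ${\cal T}$ over ${\cal T}_{0}$.
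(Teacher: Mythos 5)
Your proof is correct and follows essentially the same route as the paper: reduce to the degree-zero part via the grading of ${\cal T}$ supplied by Theorem \ref{th-ideaux-T}, then invoke the computation ${\cal T}_{0}\cap J_{_{\widetilde{\bf a}}}=\sum_{i}{\cal T}_{0}G_{i}^{\widetilde{\bf a}}$ established in the proof of Theorem \ref{th-ker-rad}, while part $1)$ is the direct computation of the images of the generators $X$, $X^{-1}$, $E$. The only superfluous step is the polynomial-interpolation argument for $I_{_{\widetilde{\bf a}}}\cap{\cal T}_{0}=J_{_{\widetilde{\bf a}}}\cap{\cal T}_{0}$: since $J_{_{\widetilde{\bf a}}}$ is defined as $\ker(r_{_{\widetilde{\bf a}}}|_{D(V)^{G'}})$ with $r_{_{\widetilde{\bf a}}}$ denoting restriction of operators to $W_{\widetilde{\bf a}}$ (not to $U_{\widetilde{\bf a}}$), one has $J_{_{\widetilde{\bf a}}}=I_{_{\widetilde{\bf a}}}\cap D(V)^{G'}$ by definition, and intersecting with ${\cal T}_{0}\subset D(V)^{G'}$ gives the equality with no work; your argument would, however, be exactly what is needed if one chose instead to define the radial component of an element of $D(V)^{G'}$ via its action on $\C[V]$ only.
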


\begin{proof}

$1)$ We have  ${\cal T}={\cal T}_{0}[X,X^{-1}]$.  And ${\cal T}_{0}={\cal Z}({\cal T})[E]$, from Proposition \ref{th-key-structure-T0}. Therefore ${\cal T}={\cal Z}({\cal T})[X,X^{-1},E]$. On the other hand we have ${r_{_{\widetilde{\bf a}}}}({\cal Z}({\cal T}))={\bb C}$, ${r_{_{\widetilde{\bf a}}}}(X)= t$, ${r_{_{\widetilde{\bf a}}}}(X^{-1})= t^{-1}$ and $r_{_{\widetilde{\bf a}}}(E)= d_{0}(t\frac{d}{dt}) +d(\widetilde{\bf a})$. Hence $R_{\widetilde{\bf a}}^r= r_{_{\widetilde{\bf a}}}({\cal T})={\bb C}[t, t^{-1}, d_{0}(t\frac{d}{dt}) +d(\widetilde{\bf a})]={\bb C}[t, t^{-1}, t\frac{d}{dt}]$.

$2)$ Obviously  $ \sum_{i=0}^r {\cal T}G_{i}^{\widetilde{\bf a}}  \subset I_{_{\widetilde{\bf a}}}$. As $I_{_{\widetilde{\bf a}}}$ is a two-sided ideal of ${\cal T}$, it is easily seen to be graded. If $D\in I_{_{\widetilde{\bf a}}}\cap {\cal T}_{p}$, then $X^{-p}D\in {\cal T}_{0}\cap I_{_{\widetilde{\bf a}}}={\cal T}_{0}\cap J_{_{\widetilde{\bf a}}}=\sum_{i=0}^r{\cal T}_{0}G_{i}^{\widetilde{\bf a}} $. Therefore $D\in \sum_{i=0}^r {\cal T}G_{i}^{\widetilde{\bf a}}$.

\end{proof}

\vskip 10pt

 %%%%%%%%%%%%%%%%%%%%%%%%%%%%%%%%%%%%%%%%%%%%%%%%


\begin{thebibliography}{amsart}
% \bibitem[  ] {  }  { },      | {\it  },  
 %             \vskip 5pt 
   \vskip 5pt 
 \bibitem{Benson-Ratcliff-survey} C. B{\scriptsize ENSON, } G. R{\scriptsize ATCLIFF }    | {\it On multiplicity free actions}, in "Representations of Real and p-adic groups", 221-304. Lecture Notes Series 2. Institute of Mathematical Sciences, National University of Singapore, Singapore University Press, World Scientific, 2004
 
  \vskip 5pt 
 
  \bibitem {Benson-Ratcliff-article} C. B{\scriptsize ENSON, } G. R{\scriptsize ATCLIFF }    | {\it A Classification of Multiplicity Free Actions}, J. Alg. 181, 152-186 (1996).
  
   \vskip 5pt 
   
  \bibitem {Benson-Ratcliff-australian} 
C. B{\scriptsize ENSON, } G. R{\scriptsize ATCLIFF }    | {\it Rationality of the generalized binomial coefficients 
  for a multiplicity free action}, J. Austral. Math. Soc. Ser. A {\bf 68} (2000),  387--410.   
  
   \vskip 5pt 

   
 \bibitem {Bopp-Rubenthaler-complexe} N. B{\scriptsize OPP, } H. R{\scriptsize UBENTHALER }    | {\it Fonction z\^eta associ\'ee \`a la s\'erie principale sph\'erique de certains espaces sym\'etriques}, Ann. Sci. Ecole Norm. Sup.   26 (1993) ${\rm n}^\circ$6, 701-745.
              \vskip 5pt
              
               \bibitem {Bopp-Rubenthaler-reel} N. B{\scriptsize OPP, } H. R{\scriptsize UBENTHALER }    | {\it Local Zeta functions attached to the minimal spherical series for a class of symmetric spaces}, Mem. Amer. Math. Soc. 174 (2005),  ${\rm n}^\circ$ 821, viii+233 pp
               \vskip 5pt
              
                          
               \bibitem {Bourbaki-1} N. B{\scriptsize OURBAKI }   | {\it  Groupes et Alg\`ebres de Lie, Chapitre 1}, C.C.L.S, Paris, 1971.  
   \vskip 5pt 
              
               \bibitem {Brion} M. B{\scriptsize RION }   | {\it  Repr\'esentations exceptionnelles des groupes semi-simples}, Ann. Sci. Ecole Norm. Sup. 18 (1985), 345-387.
  
   \vskip 5pt 

               
 \bibitem {Faraut-Koranyi-book} J.  F{\scriptsize ARAUT}, A.  K{\scriptsize ORANYI}  | {\it  Analysis on Symmetric Cones}, Oxford Mathematical Monographs. Oxford Science Publications. The Clarendon Press, Oxford University Press, New York, 1994. xii+382 pp.
              \vskip 5pt 

              
%\bibitem[F-K] {Far-Kor-art } J.  F{\scriptsize ARAUT}, A.  K{\scriptsize ORANYI}  | {\it Function Spaces and Reproducing Kernels on Bounded Symmetric Domains}, J. Funct. Anal. 88 (1990) ${\rm n}^{\circ}$1, 64-89.

              %\vskip 5pt 
              
               \bibitem {Goodearl-Warfield} K. R. G{\scriptsize OODEARL},   R. B. W{\scriptsize ARFIELD},   | {\it An Introduction to Noncommutative Noetherian Rings},  Second edition. London Mathematical Society Student Texts, 61. Cambridge University Press, Cambridge, 2004. xxiv+344 pp.
              
                            \vskip 5pt 
              \bibitem {Heckmann-Schlichtkrull} G.  H{\scriptsize ECKMANN}, H.  S{\scriptsize CHLICHTKRULL}  | {\it Harmonic Analysis ans Special Functions on Symmetric Spaces}, Perspectives in Mathematics Vol. 16, Academic  Press (1994).

              \vskip 5pt 
              
                             
 %\bibitem {Howe-Transcending} R. H{\scriptsize OWE},    | {\it Transcending Classical Invariant Theory}, Journal of the Amer. math. Soc. Volume 2, Number 3 (1989), 535-552.

              %\vskip 5pt 
                            
 
                  
\bibitem {Howe-Umeda} R. H{\scriptsize OWE},  T. U{\scriptsize MEDA}  | {\it The Capelli identity, the double commutant theorem, and multiplicity-free actions}, Math. Ann. 290 (1991), ${\rm n}^\circ 3$, 565-619.

              \vskip 5pt 
%\bibitem {Humphreys-Coxeter} J. E. H{\scriptsize UMPHREYS}   | {\it Reflection Groups and Coxeter Groups}, Cambridge Studies in Advanced Mathematics, 29. Cambridge University Press, Cambridge, 1990. xii+204 pp.

             % \vskip 5pt 
              
              \bibitem {Igusa} J. I{\scriptsize GUSA}   | {\it On Lie Algebras Generated by Two Differential Operators},  Manifolds and Lie Groups , Progr. Math. 14 (1981) Birkh\"auser, Boston, Mass., 187-195.

              \vskip 5pt 

              
             % \bibitem[J] {Joseph} A. J{\scriptsize OSEPH}   | {\it A generalization of Quillen's lemma and its application to the Weyl algebras},  Israel J. Math. 28 (1977), no. 3, 177--192.             
              % \vskip 5pt 

              
              \bibitem {Kac} V. K{\scriptsize AC}   | {\it Some remarks on nilpotent orbits}, J. Alg. 64 (1980), 190-213.

              \vskip 5pt 
              
 \bibitem {Kimura-book} T. K{\scriptsize IMURA}   | {\it  Introduction to Prehomogeneous Vector Spaces},  Transl. Math. Monogr., vol 215, American Mathematical Society, Providence, RI, 2003.
              \vskip 5pt 
                            \bibitem {Knop-1} F. K{\scriptsize NOP}   | {\it  A Harish-Chandra homomorphism for reductive group actions},  Ann. of Math. (2) 140 (1994), no. 2, 253-288.
              
                  \vskip 5pt 
              \bibitem {Knop-2} F. K{\scriptsize NOP}   | {\it  Some remarks on multiplicity free spaces},  Representation theories and algebraic geometry (Montreal, PQ, 1997), 301--317, NATO Adv. Sci. Inst. Ser. C Math. Phys. Sci., 514, Kluwer Acad. Publ., Dordrecht, 1998.
              
                  \vskip 5pt 
             % \bibitem {Koecher}M. K{\scriptsize   OECHER}   | {\it Imbedding of Jordan algebras into Lie algebras I},  Americ. J. Math.,    89 (1967),787-815.
%\vskip 5pt


  \bibitem {Kostant-Sahi} B. K{\scriptsize OSTANT}, S. S{\scriptsize AHI}  | {\it The Capelli identity, Tube Domains and the Generalized Laplace Transform},  Adv. Math. 87 (1991), 71-92.

              \vskip 5pt 
             
 \bibitem  {Leahy} A. S. L{\scriptsize EAHY}    | {\it   A Classification of Multiplicity Free Representations},  Journal of Lie Theory, volume 8 (1998) 367-391.
 
 \vskip 5pt 
              
              \bibitem  {Levasseur} T. L{\scriptsize EVASSEUR}    | {\it   Radial components, prehomogeneous vector spaces, and rational Cherednik algebras}, Int. Math. Res. Not. IMRN 2009, no. 3, 462--511.

\vskip 5pt 

 \bibitem  {Levasseur-Stafford} T. L{\scriptsize EVASSEUR} and  J.T. S{\scriptsize TAFFORD}   | {\it   Rings of differential operators on classical rings of invariants},  Memoirs of the AMS, volume 81, $n^\circ$ 412 (1989).

\vskip 5pt 

              
% \bibitem {leroy-al} A. L{\scriptsize EROY}, J. {\scriptsize MATCZUK} and  J. O{\scriptsize KNINSKI}  | {\it On the Gelfand-Kirillov dimension of normal localizations and twisted polynomial rings},   in Perspectives in Ring Theory, F. van Oystaeyen and L. Le Bruyn, Editors, Kluwer Academic Publishers (1988), 205-214.
              
              % \vskip 5pt 

 \bibitem {Maass} H. M{\scriptsize  AASS}      | {\it  Siegel's Modular Forms and Dirichlets series}, Lecture Notes in Mathematics, Vol. 216. Springer-Verlag, Berlin-New York, 1971. v+328 pp.
             
              \vskip 5pt 

 \bibitem {macConnell-rob} J.C. McC{\scriptsize  ONNEL} and J.C. R{\scriptsize OBSON}    | {\it  Non Commutative Noetherian Rings}, Wiley-interscience, Chichester, 1987.

\vskip 5pt 

 \bibitem {Muller-Rubenthaler-Schiffmann} I. M{\scriptsize ULLER}, H. R{\scriptsize UBENTHALER}  and G. S{\scriptsize CHIFFMANN}  | {\it  Structure des espaces pr\'ehomog\`enes associ\'es \`a certaines alg\`ebres de Lie gradu\'ees},  Math. Ann. 274 (1986), 95-123.

\vskip 5pt 

 \bibitem {Nomura}  T. N{\scriptsize OMURA},      | {\it  Algebraically independent generators of invariant differential operators on a symmetric cone},  J. reine angew. Math. 400 (1989), 122-133.
           
           \vskip 5pt 

 \bibitem {Rais} M. R{\scriptsize A\" IS}  | {\it  Distributions homog\`enes sur des espaces de matrices}, Bull. Soc. Math. France, M\'emoire 30 (1972)

\vskip 5pt 

\bibitem {Rallis-Schif} S. R{\scriptsize ALLIS} and G. S{\scriptsize CHIFFMANN}    | {\it  Weil Representation. I Intertwining distributions and discrete spectrum}, Mem. Amer. Math. Soc. 25 (1980) ${\rm n}^{\circ}231$.

\vskip 5pt 

% \bibitem {Rubenthaler-note-PV} H. R{\scriptsize UBENTHALER}      | {\it   Espaces vectoriel pr\'ehomog\`enes, sous-groupes paraboliques et ${\go {sl}}_{2}-triplets$}, C.R. Acad. Sci. Paris S\'er. A-B 290 (1980), $n^{\circ}$3, A127-A129. 

%\vskip 5pt 
 
 %\bibitem{Rubenthaler-these-etat}  H.    R{\scriptsize  UBENTHALER} |  {\it Espaces pr\'ehomog\`enes de type parabolique}, Th\`ese d'Etat, Universit\'e de Strasbourg   (1982).
              
              %\vskip 5pt

 
 %\bibitem {Rubenthaler-bouquin-PV} H. R{\scriptsize UBENTHALER}      | {\it   Alg\`ebres de Lie et espaces pr\'ehomog\`enes}, in: Travaux en cours, Hermann, Paris, 1992.
 
 %\vskip 5pt 
 
 %\bibitem {Rubenthaler-note-dualite} H. R{\scriptsize UBENTHALER}      | {\it   Une dualit\'e de type Howe en dimension infinie}, C. R. Acad. Sci. Paris SŽr. I Math. 314 (1992), no. 6, 435-440.
 
 %\vskip 5pt 
 
% \bibitem {Rubenthaler-note-MF} H. R{\scriptsize UBENTHALER}      | {\it   Algebras of invariant differential operators on a class of multiplicity free spaces}, C. R. Math. Acad. Sci. Paris 347 (2009), no. 23-24, 1343 -1346.
  
  %\vskip 5pt 
  
   \bibitem {Rubenthaler-JLT} H. R{\scriptsize UBENTHALER}      | {\it   Multiplicity Free Spaces with a One-Dimensional Quotient}, Journal of Lie Theory 23  (2013), $\text{n}^\circ2$, 433-458.
  
  \vskip 5pt 



 \bibitem {Rubenthaler-Schiffmann-1} H. R{\scriptsize UBENTHALER} and G. S{\scriptsize CHIFFMANN}    | {\it  Op\'erateurs diff\'erentiels de Shimura et espaces pr\'ehomog\`enes}, Invent. Math.   90 (1989), 409-442.

\vskip 5pt 

 \bibitem {Rubenthaler-Schiffmann-2} H. R{\scriptsize UBENTHALER} and G. S{\scriptsize CHIFFMANN}    | {\it    $SL_2$-triplet associ\'e   \`a un polyn\^ome homog\`ene},  J. reine angew. math. 408 (1990), 136-158.   


\vskip 5pt  

\bibitem {Schwarz} G. S{\scriptsize CHWARZ }    | {\it   Finite-dimensional representations of invariant differential operators},   J. Alg. 258 (2002), 160-204.

\vskip 5pt 


 %\bibitem {shale} D. S{\scriptsize HALE}    | {\it  Linear symmetries of free boson fields},   Trans. Amer. Math. Soc. 103 (1962), 149-167.

%\vskip 5pt 
 
\bibitem{Sato-Kimura}M. S{\scriptsize  ATO} -- T. K{\scriptsize  IMURA}  | {\it A classification of irreducible prehomogeneous  vector spaces and their relative invariants}, Nagoya Math. J.  65 (1977),  1--155.

 \vskip 5pt
 
  \bibitem {Smith} S. P. S{\scriptsize MITH}    | {A class of algebras similar to the enveloping algebra of ${\go s}{\go l}(2)$},   Trans. Amer. Math. Soc, 322, 1 (1990), 285-314.

\vskip 5pt 

 

 \bibitem {Terras} A. T{\scriptsize ERRAS}    |  {\it   Harmonic analysis on symmetric spaces and applications I, II},   Springer (1988).

\vskip 5pt 

                           %\bibitem{Tits}  J.  T{\scriptsize  ITS} |  {Une classe d'alg\`ebres de Lie en relation avec les alg\`ebres de Jordan}, Indag. Math. 24, (1962), 530-535.
              
              %\vskip 5pt


  
 \bibitem {Van-den-Bergh} M. V{\scriptsize AN DEN} B{\scriptsize ERGH}    | {\it    Some rings of differential operators for $SL_{2}$-invariants are simple},  J. Pure App. Algebra 107 (1996) 309-335.

\vskip 5pt 

 \bibitem {Vinberg-Kimelfeld} E. V{\scriptsize INBERG} and B.  K{\scriptsize IMELFELD}    | {\it     Homogeneous domains on flag manifolds and spherical subsets of semi-simple Lie groups},  Funktsional. Anal. i Prilozhen. 12 (1978), no. 3, 12--19, 96.

\vskip 5pt 


 \bibitem {Wallach} N. W{\scriptsize ALLACH}    | {\it    Polynomial Differential Operators Associated with Hermitian Symmetric Spaces}, Representation Theory of Lie Groups and Lie Algebras, World Sci. Publishing, River Edge, NJ (1992), 76-94.


 
  \vskip 5pt 

 %\bibitem {weil} A. W{\scriptsize EIL}    | {\it Sur certains groupes d'op\'erateurs unitaires   }, Acta Math. 111 (1964), 143-211.
 
  

%\vskip 5pt 

 \bibitem {Yan} Z. Y{\scriptsize AN}    | {\it    Invariant Differential Operators and Holomorphic Function Spaces},  J. Lie Theory 10 (2000) $n^\circ 1$, 1-31. 

\vskip 5pt 

 %\bibitem {Zhang-dimGK} J.J. Z{\scriptsize HANG}    | {\it    A note on GK dimension of skew polynomial extensions},  Proc. Amer. Math. Soc. 125 (1997), no. 2, 363-373


%\vskip 5pt 

\end{thebibliography}
\end{document}